\theoremstyle{plain}
\newtheorem{corollary}{Corollary}
\newtheorem{definition}{Definition}
\newtheorem{lemma}{Lemma}
\newtheorem{agreement}{Agreement}
\newtheorem{proposition}{Proposition}
\newtheorem{remark}{Remark}
\newtheorem{theorem}{Theorem}
\numberwithin{equation}{section}
\begin{document}

\title[Dynamic inverse problem and classical moment problems]{Dynamic inverse problem for special system
associated with Jacobi matrices and classical moment problems.}

\author{Alexander Mikhaylov} 
\address{St. Petersburg   Department   of   V.A. Steklov    Institute   of   Mathematics
of   the   Russian   Academy   of   Sciences, 7, Fontanka, 191023
St. Petersburg, Russia and Saint Petersburg State University,
St.Petersburg State University, 7/9 Universitetskaya nab., St.
Petersburg, 199034 Russia.} \email{mikhaylov@pdmi.ras.ru}

\author{Victor Mikhaylov} 
\address{St.Petersburg   Department   of   V.A.Steklov    Institute   of   Mathematics
of   the   Russian   Academy   of   Sciences, 7, Fontanka, 191023
St. Petersburg, Russia and Saint Petersburg State University,
St.Petersburg State University, 7/9 Universitetskaya nab., St.
Petersburg, 199034 Russia.} \email{ftvsm78@gmail.com}

\keywords{problem of moments, Boundary control method, Jacobi
matrices}
\date{March, 2019}

\maketitle

\noindent {\bf Abstract.} We consider the Hamburger, Stieltjes and
Hausdorff moment problems, that are problems of the construction
of a Borel measure supported on a real line, on a half-line or on
an interval $(0,1)$, from a prescribed set of moments. We propose
a unified approach to these three problems based on using the
auxiliary dynamical system with the discrete time associated with
a semi-infinite Jacobi matrix. It is show that the set of moments
determines the inverse dynamic data for such a system. Using the
ideas of the Boundary Control method for every $N\in \mathbb{N}$
we can recover the spectral measure of a $N\times N$ block of
Jacobi matrix, which is a solution to a truncated moment problem.
This problem is reduced to the finite-dimensional generalized
spectral problem, whose matrices are constructed from moments and
are connected with well-known Hankel matrices by simple formulas.
Thus the results on existence of solutions to Hamburger, Stieltjes
and Hausdorff moment problems are naturally given in terms of
these matrices. We also obtain results on uniqueness of the
solution of moment problems, where as a main tool we use the
Krein-type equations of inverse problem.

\section{Introduction.}

Given a sequence of numbers $s_0,s_1,s_2,\ldots$ which are called
moments, the classical moment problem consists in finding a Borel
measure $\rho$ such that
\begin{equation}
\label{Moment_eq} s_k=\int_{-\infty}^\infty
\lambda^k\,d\rho(\lambda),\quad k=0,1,2,\ldots.
\end{equation}
When $\operatorname{supp}{\rho}=\mathbb{R}$ the problem is called
Hamburger moment problem, when
$\operatorname{supp}{\rho}=(0,+\infty)$ the problem is called the
Stieltjes moment problem, and when
$\operatorname{supp}{\rho}=(0,1)$ the problem is called Hausforff
moment problem. These three problems have received a lot of
attention in the lust century, to mention \cite{A,T,H1,H2,S} and
references therein. In the present paper we offered an unified
approach to these three classical problems bases on considering an
auxiliary dynamical system with discrete time for Jacobi matrix
\cite{MM,MM3,MMS} and ideas of the Boundary Control (BC) method
\cite{B07,B17} of solving the inverse dynamic problems for
hyperbolic dynamical systems.

In the second section we consider initial-boundary value problems
for dynamical systems with discrete time associated with
semi-infinite and finite Jacobi matrices. Following
\cite{MM,MM2,MM3} we derive a dynamic and spectral representations
of their systems. We introduce the operators of the BC method and
show that \emph{the response operator}, i.e the discrete analog of
a dynamic Dirichlet-to-Neumann map for these systems (operators of
this type are used as inverse data in dynamic inverse problems
\cite{B07,B17}) has a form of convolution. The kernel of the
response operator, which is called \emph{response vector}, admits
a spectral representation in terms of a spectral measure of
corresponding Jacobi matrix. This fact establishes the
relationship between spectral (measure) and dynamic (response
vector) data and gives a possibility to apply some ideas of the BC
method \cite{B2001,AMM} to solving the truncated moment problem.

In the third section we solve the truncated moment problem by
extracting spectral data (i.e. the spectral measure of $N\times N$
block of Jacobi matrix) from the response vector. The main results
are given in Theorems \ref{teor} and \ref{Propos_EL}, which say
that the solution to a truncated moment problem can be constructed
by solving special finite dimensional generalized spectral
problem, in which the matrices are connected with classical Hankel
matrices (see \cite{Ahiez,S}) constructed from moments by simple
transformation. Then the results on the existence of solution to
all three moment problems are given in terms of inequalities for
these matrices. Note that classical results for Hausdorff moment
problem \cite{H1,H2} are given in completely different terms.

In the last section we obtain the results on uniqueness of the
solution to Hamburger, Stieltjes and Hausdorff moment problems.
The main tools in our considerations are classical Weyl-type
results on the index of Jacobi matrix \cite{Ahiez,S} and Krein
equations of inverse problem in dynamic form. For continuous
systems such equations were derived firstly in \cite{BL} and in
the framework of the BC method in \cite{AM,BM}; for the discrete
systems they were derived in \cite{MM,MM2,MM3}. We also compare
the results on existence for Hausdorff moment problem obtained in
the paper with classical results of Hausdorff \cite{H1,H2,T}.

\section{Dynamical systems with discrete time associated with Jacobi matrix. Operators of the BC method.}

In this section we outline some results on forward problems for
dynamical system with discrete time associated with finite and
semi-infinite Jacobi matrices obtained in \cite{MM,MM2,MM3}.

\subsection{Finite Jacobi matrices}

For a given sequence of positive numbers $\{a_0,$ $a_1,\ldots\}$
(in what follows we assume $a_0=1$) and real numbers $\{b_1,
b_2,\ldots \}$, we denote by $A$ a semi-infinite Jacobi matrix
\begin{equation}
\label{Jac_matr}
A=\begin{pmatrix} b_1 & a_1 & 0 & 0 & 0 &\ldots \\
a_1 & b_2 & a_2 & 0 & 0 &\ldots \\
0 & a_2 & b_3 & a_3 & 0 & \ldots \\
\ldots &\ldots  &\ldots &\ldots & \ldots &\ldots
\end{pmatrix}.
\end{equation}
For $N\in \mathbb{N}$, by $A^N$ we denote the $N\times N$ Jacobi
matrix which is a block of (\ref{Jac_matr}) consisting of the
intersection of first $N$ columns with first $N$ rows of $A$.

Introduce the notation $\mathbb{N}_0=\mathbb{N}\cup\{0\}$, and
consider a dynamical system with discrete time associated with a
finite Jacobi matrix $A^N$:
\begin{equation}
\label{Jacobi_dyn_int} \left\{
\begin{array}l
v_{n,t+1}+v_{n,t-1}-a_nv_{n+1,t}-a_{n-1}v_{n-1,t}-b_nv_{n,t}=0,\,\, t\in \mathbb{N}_0,\,\, n\in 1,\ldots, N,\\
v_{n,\,-1}=v_{n,\,0}=0,\quad n=1,2,\ldots,N+1, \\
v_{0,\,t}=f_t,\quad v_{N+1,\,t}=0,\quad t\in \mathbb{N}_0,
\end{array}\right.
\end{equation}
by an analogy with continuous problems \cite{B07,AM,BM}, we treat
the real sequence $f=(f_0,f_1,\ldots)$ as a \emph{boundary
control}. Fixing a positive integer $T$ we denote by
$\mathcal{F}^T$ the \emph{outer space} of the system
(\ref{Jacobi_dyn_int}): $\mathcal{F}^T:=\mathbb{R}^T$, $f\in
\mathcal{F}^T$, $f=(f_0,\ldots,f_{T-1})$, we use the notation
$\mathcal{F}^\infty=\mathbb{R}^\infty$ when control acts for all
$t\geqslant 1$. The solution to (\ref{Jacobi_dyn_int}) is denoted
by $v^f$. Note that (\ref{Jacobi_dyn_int}) is a discrete analog of
an initial boundary value problem for a wave equation with a
potential on an interval with the Dirichlet control at the left
end and Dirichlet condition at the right end. The operator
corresponding to a finite Jacobi matrix $A^N$ and Dirichlet
condition at $n=N+1$ we also denote by $A^N:\mathbb{R}^N\mapsto
\mathbb{R}^N$:
\begin{equation*}
(A^N\psi)_n=\begin{cases}b_1\psi_1+a_1\psi_2,\quad n=1,\\
a_{n}\psi_{n+1}+a_{n-1}\psi_{n-1}+b_n\psi_n,\quad
2\leqslant n\leqslant N-1,\\
a_{N-1}\psi_{N-1}+b_N\psi_N,\quad n=N,
\end{cases}
\end{equation*}

Denote by $\phi_n(\lambda)$ a solution to the Cauchy problem for
the following difference equation
\begin{equation}
\label{Phi_def}
\begin{cases} a_n\phi_{n+1}+a_{n-1}\phi_{n-1}+b_n\phi_n=\lambda\phi_n,\quad n\geqslant 1,\\
\phi_0=0,\,\,\phi_1=1.
\end{cases}
\end{equation}
Thus $\phi_k(\lambda)$ is a polynomial of degree $k-1$. Denote by
$\{\lambda_k\}_{k=1}^N$ the roots of the equation
$\phi_{N+1}(\lambda)=0$, it is known \cite{Ahiez,S} that they are
real and distinct. We introduce the vectors $\phi^n\in
\mathbb{R}^N$ by the rule $\phi^n_i:=\phi_i(\lambda_n)$,
$n,i=1,\ldots,N,$ and define the numbers $\rho_k$ by
\begin{equation*}
(\phi^k,\phi^l)=\delta_{kl}\rho_k,\quad
k,l=1,\ldots,N,
\end{equation*}
where $(\cdot,\cdot)$ is a scalar product in $\mathbb{R}^N$.
\begin{definition}
The set of pairs
\begin{equation*}
\{\lambda_k,\rho_k\}_{k=1}^N
\end{equation*}
is called Dirichlet spectral data of operator $A^N$.
\end{definition}
\begin{definition}
For $f,g\in \mathcal{F}^\infty$ we define the convolution
$c=f*g\in \mathcal{F}^\infty$ by the formula
\begin{equation*}
c_t=\sum_{s=0}^{t}f_sg_{t-s},\quad t\in \mathbb{N}\cup \{0\}.
\end{equation*}
\end{definition}
Denote by $\mathcal{T}_k(2\lambda)$ the Chebyshev polynomials of
the second kind: they are obtained as a solution to the following
Cauchy problem:
\begin{equation}
\label{Chebysh} \left\{
\begin{array}l
\mathcal{T}_{t+1}+\mathcal{T}_{t-1}-\lambda \mathcal{T}_{t}=0,\\
\mathcal{T}_{0}=0,\,\, \mathcal{T}_1=1.
\end{array}
\right.
\end{equation}
In \cite{MM,MM3} the following expression for the solution $v^f$
was proved:
\begin{proposition}
The solution to (\ref{Jacobi_dyn_int}) admits a representation
\begin{equation}
\label{Jac_sol_rep_int} v^f_{n,t}=
\begin{cases}
\sum_{k=1}^N c_t^k\phi^k_n,\quad n=1,\ldots,N,\\
f_t,\quad n=0.
\end{cases},\quad
c^k=\frac{1}{\rho_k}\mathcal{T}\left(\lambda_k\right)*f.
\end{equation}
\end{proposition}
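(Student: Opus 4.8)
The plan is to verify the representation \eqref{Jac_sol_rep_int} by checking that the right-hand side satisfies all three lines of the dynamical system \eqref{Jacobi_dyn_int}, and then invoke uniqueness of the solution of that Cauchy-type problem. First I would expand the candidate solution in the basis of the eigenvectors $\phi^k$ of $A^N$: since each $\phi^k$ satisfies the difference equation \eqref{Phi_def} with $\lambda=\lambda_k$, applying the spatial difference operator to $v^f_{n,t}=\sum_k c_t^k\phi_n^k$ (for $1\le n\le N$) turns $a_n\phi^k_{n+1}+a_{n-1}\phi^k_{n-1}+b_n\phi^k_n$ into $\lambda_k\phi^k_n$. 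The subtlety at $n=N$ is that $\phi^k_{N+1}=\phi_{N+1}(\lambda_k)=0$ by the very choice of the $\lambda_k$ as roots of $\phi_{N+1}$, so the boundary term drops out and the vector $(\phi^k_n)_{n=1}^N$ is a genuine eigenvector of the matrix $A^N$; similarly $v^f_{0,t}=f_t$ handles the left boundary automatically, and the condition $v^f_{N+1,t}=0$ is exactly $\phi^k_{N+1}=0$ again.

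Next I would plug this into the evolution equation. The equation $v_{n,t+1}+v_{n,t-1}-a_nv_{n+1,t}-a_{n-1}v_{n-1,t}-b_nv_{n,t}=0$ becomes, after using the eigenvector property, $\sum_k\bigl(c^k_{t+1}+c^k_{t-1}-\lambda_k c^k_t\bigr)\phi^k_n=0$ for all $n$, and since $\{\phi^k\}_{k=1}^N$ is a basis of $\mathbb{R}^N$ this is equivalent to $c^k_{t+1}+c^k_{t-1}-\lambda_k c^k_t=0$ for each $k$. So the task reduces to showing that the sequence $c^k=\frac1{\rho_k}\mathcal{T}(\lambda_k)*f$ solves this scalar second-order recursion with the right initial data. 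This is where the Chebyshev polynomials enter: $\mathcal{T}_t(\lambda_k)$ is by definition \eqref{Chebysh} the solution of precisely that recursion, so $\mathcal{T}(\lambda_k)*f$ is its convolution with the control, and I would check by a direct Duhamel-type computation that convolving a fundamental solution of a linear recursion with $f$ produces a solution of the inhomogeneous problem whose inhomogeneity is supplied by the boundary control $f_t=v_{0,t}$ feeding in through the $a_0 v_{0,t}=v_{0,t}$ term (recall $a_0=1$).

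Finally I would check the initial conditions $v_{n,-1}=v_{n,0}=0$. Because $\mathcal{T}_0=0$ and $\mathcal{T}_1=1$, the convolution $c^k_t=\frac1{\rho_k}\sum_{s=0}^t\mathcal{T}_{t-s}(\lambda_k)f_s$ vanishes at $t=0$ (empty-ish sum with $\mathcal{T}_0=0$) and one extends it by $c^k_{-1}=0$ consistently with the recursion; hence $v^f_{n,0}=v^f_{n,-1}=0$ for $n=1,\dots,N$, and the boundary entries at $n=0,N+1$ are handled as above. Once the candidate is shown to satisfy \eqref{Jacobi_dyn_int} line by line, uniqueness of the solution — which follows because \eqref{Jacobi_dyn_int} determines $v_{n,t+1}$ explicitly from data at times $t$ and $t-1$, so the whole array is generated by forward recursion in $t$ — finishes the proof. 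I expect the only real bookkeeping obstacle to be the Duhamel step: correctly matching the discrete convolution against the inhomogeneous recursion and confirming that the shift by one time step inherent in $c^k=\frac1{\rho_k}\mathcal{T}(\lambda_k)*f$ reproduces exactly the coupling of $f_t$ into the $n=1$ equation through $a_0v_{0,t}$, with no off-by-one error in the summation limits; the spectral/eigenvector part is essentially immediate given \eqref{Phi_def} and the definition of the $\lambda_k$.
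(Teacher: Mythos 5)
The paper itself gives no proof of this proposition (it quotes the result from \cite{MM,MM3}), so the comparison is with the verification argument one would expect. Your overall plan is the right one and would work: expand the candidate in the vectors $\phi^k$, which are genuine eigenvectors of $A^N$ precisely because the $\lambda_k$ are the roots of $\phi_{N+1}$; reduce the evolution equation to scalar second-order recursions in $t$; solve these by a discrete Duhamel formula with the Chebyshev kernel \eqref{Chebysh}; and conclude by uniqueness, since \eqref{Jacobi_dyn_int} generates $v_{n,t+1}$ by forward recursion from zero initial data.

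There is, however, a genuine inconsistency at the key step. Your displayed reduction, ``$\sum_k(c^k_{t+1}+c^k_{t-1}-\lambda_k c^k_t)\phi^k_n=0$ for all $n$, hence $c^k_{t+1}+c^k_{t-1}-\lambda_k c^k_t=0$ for each $k$,'' is false at $n=1$: the equation there contains the term $a_0v_{0,t}=f_t$, whereas the eigenvector identity \eqref{Phi_def} uses $\phi^k_0=0$, so the correct reduction is $\sum_k(c^k_{t+1}+c^k_{t-1}-\lambda_k c^k_t)\phi^k_n=f_t\,\delta_{n1}$. Had the homogeneous recursion held for every $k$, the zero initial conditions would force $c^k\equiv 0$ and hence $v^f\equiv 0$, contradicting the formula you are proving. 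The missing ingredient is the expansion of the forcing direction in the orthogonal basis: since $(\phi^k,\phi^l)=\delta_{kl}\rho_k$ and $\phi_1(\lambda_k)=1$, one has $e_1=\sum_k\frac{1}{\rho_k}\phi^k$, so each mode satisfies the forced recursion $c^k_{t+1}+c^k_{t-1}-\lambda_k c^k_t=\frac{f_t}{\rho_k}$; this is exactly where the factor $\frac{1}{\rho_k}$ in \eqref{Jac_sol_rep_int} originates, a point your sketch never identifies and instead relegates to ``bookkeeping.'' Once this is in place the Duhamel check is immediate: with $c^k_t=\frac{1}{\rho_k}\sum_{s=0}^{t-1}\mathcal{T}_{t-s}(\lambda_k)f_s$, the recursion \eqref{Chebysh} annihilates all terms except $\mathcal{T}_1(\lambda_k)f_t=f_t$, and $c^k_0=c^k_{-1}=0$ because $\mathcal{T}_0=0$. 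You do gesture at the inhomogeneity later (``Duhamel-type,'' ``inhomogeneous problem''), so the repair is local, but as written the central displayed step contradicts both the rest of your argument and the statement being proved.
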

The \emph{inner space} of dynamical system (\ref{Jacobi_dyn_int})
is denoted by $\mathcal{H}^N:=\mathbb{R}^N$, $h\in \mathcal{H}^T$,
$h=(h_1,\ldots, h_N)$. By (\ref{Jac_sol_rep_int}) we have that
$v^f_{\cdot,\,T}\in \mathcal{H}^N$. For the system
(\ref{Jacobi_dyn_int}) the \emph{control operator}
$W^T_{N}:\mathcal{F}^T\mapsto \mathcal{H}^N$ is defined by the
rule
\begin{equation*}
W^T_{N}f:=v^f_{n,\,T},\quad n=1,\ldots,N.
\end{equation*}

The input $\longmapsto$ output correspondence in the system
(\ref{Jacobi_dyn_int}) is realized by a \emph{response operator}:
$R^T:\mathcal{F}^T\mapsto \mathbb{R}^T$, defined by the formula
\begin{equation} \label{R_def_int}
\left(R^T_Nf\right)_t=v^f_{1,\,t}, \quad t=1,\ldots,T.
\end{equation}
This operator has a form of a convolution:
\begin{equation*}
\left(R^T_Nf\right)=r^N*f_{\cdot-1},
\end{equation*}
where the convolution kernel is called a \emph{response vector}:
$r^N=(r^N_0,r^N_1,\ldots,r^N_{T-1})$. The response operator plays
the role of dynamic inverse data \cite{B07,B17}, for the discrete
systems see \cite{MM,MM3}.

The \emph{connecting operator} $C^T_{N}: \mathcal{F}^T\mapsto
\mathcal{F}^T$ for the system (\ref{Jacobi_dyn_int}) is defined
via the quadratic form: for arbitrary $f,g\in \mathcal{F}^T$ one
has that
\begin{equation*}
\left(C^T_{N} f,g\right)_{\mathcal{F}^T}=\left(v^f_{\cdot,\,T},
v^g_{\cdot,\,T}\right)_{\mathcal{H}^N}=\left(W^T_{N}f,W^T_{N}g\right)_{\mathcal{H}^N},\quad
C^T_N=\left(W^T_{N}\right)^*W^T_{N}.
\end{equation*}

The speed of a wave propagation in the system
(\ref{Jacobi_dyn_int}) is finite, which implies the following
dependence of inverse data on coefficients $\{a_n,b_n\}$: for
$M\in \mathbb{N}$, $M\leqslant N,$ the element $v^f_{1,2M-1}$
depends on $\left\{a_1,\ldots,a_{M-1}\right\},$
$\left\{b_1,\ldots,b_{M}\right\}$, on observing this we can
formulate the following
\begin{remark}
\label{Rem1} The entries of the response vector
$(r_0^N,r_1^N,\ldots,r_{2N-2}^N)$) depends on
$\{a_0,\ldots,a_{N-1}\}$, $\{b_1,\ldots,b_{N}\}$, and does not
depend on the boundary condition at $n=N+1$, the entries starting
from $r_{2N-1}^N$ does "feel" the boundary condition at $n=N+1$.
\end{remark}

On introducing the special control $\delta=(1,0,0,\ldots)$, one
can see that the kernel of the response operator (\ref{R_def_int})
is given by
\begin{equation}
\label{con1}
r_{t-1}^{N}=\left(R^T_{N}\delta\right)_t=v^\delta_{1,\,t},\quad
t=1,\ldots.
\end{equation}
The spectral function of operator $A^N$ is introduced by the rule
\begin{equation}
\label{measure}
\rho^{N}(\lambda)=\sum_{\{k\,|\,\lambda_k<\lambda\}}\frac{1}{\rho_k},
\end{equation}
then from (\ref{Jac_sol_rep_int}), (\ref{con1}) we immediately
deduce
\begin{proposition}
The solution to (\ref{Jacobi_dyn_int}), the response vector of
(\ref{Jacobi_dyn_int}) and entries of the matrix of the connecting
operator $C^T_N$ admit the following spectral representations:
\begin{eqnarray}
v^f_{n,t}=\int_{-\infty}^\infty \sum_{k=1}^t
\mathcal{T}_k(\lambda)f_{t-k}\varphi_n(\lambda)\,d\rho^{N}(\lambda),\quad n,t\in \mathbb{N},\label{Jac_sol_spectr} \\
r_{t-1}^{N}=\int_{-\infty}^\infty
\mathcal{T}_t(\lambda)\,d\rho^{N}(\lambda),\quad t\in \mathbb{N},\label{Resp_int_spectr}\\
\{C^T_{N}\}_{l+1,\,m+1}=\int_{-\infty}^\infty
\mathcal{T}_{T-l}(\lambda)\mathcal{T}_{T-m}(\lambda)\,d\rho^{N}(\lambda),
\,\, l,m=0,\ldots,T-1\label{SP_mes_int} .
\end{eqnarray}
\end{proposition}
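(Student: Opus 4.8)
The plan is to derive all three identities directly from the dynamic representation (\ref{Jac_sol_rep_int}) together with the definition (\ref{measure}) of the spectral function, which is chosen precisely so that for any function $g$ the weighted eigenvalue sum $\sum_{k=1}^N\rho_k^{-1}g(\lambda_k)$ equals the Stieltjes integral $\int_{-\infty}^\infty g(\lambda)\,d\rho^N(\lambda)$; thus every step is a finite computation followed by this bookkeeping. For (\ref{Jac_sol_spectr}) I would substitute $c^k_t=\frac{1}{\rho_k}(\mathcal{T}(\lambda_k)*f)_t=\frac{1}{\rho_k}\sum_{s=0}^t\mathcal{T}_s(\lambda_k)f_{t-s}$ into $v^f_{n,t}=\sum_{k=1}^N c^k_t\phi^k_n$, interchange the two finite sums, and recall that $\phi^k_n=\varphi_n(\lambda_k)$ while $\mathcal{T}_0\equiv 0$ lets the convolution sum start at $s=1$; after renaming the dummy index this is the inner sum $\sum_{k=1}^t\mathcal{T}_k(\lambda)f_{t-k}$ under the integral, and recognizing the outer sum over eigenvalues as integration against $d\rho^N$ gives (\ref{Jac_sol_spectr}). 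Since $\varphi_1\equiv 1$, the same formula is consistent with the $n=0$ line of (\ref{Jac_sol_rep_int}).

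Next, for (\ref{Resp_int_spectr}) I would put $f=\delta=(1,0,0,\ldots)$ and $n=1$ in the identity just obtained. The factor $f_{t-k}=\delta_{t-k,0}$ annihilates every term of the $k$-sum except $k=t$, and $\varphi_1(\lambda)=\phi_1(\lambda)\equiv 1$, so by (\ref{con1}) one gets $r^N_{t-1}=v^\delta_{1,t}=\int_{-\infty}^\infty\mathcal{T}_t(\lambda)\,d\rho^N(\lambda)$.

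Finally, for (\ref{SP_mes_int}) I would start from the quadratic-form definition of $C^T_N$, write $v^f_{\cdot,T}=\sum_{k=1}^N c^k_T\phi^k$ and $v^g_{\cdot,T}=\sum_{l=1}^N c^l_T\phi^l$ in $\mathcal{H}^N=\mathbb{R}^N$, and use the orthogonality relation $(\phi^k,\phi^l)=\delta_{kl}\rho_k$ to collapse the double sum to $\sum_{k=1}^N\rho_k\,(c^k_T)_f\,(c^k_T)_g$. Inserting the convolution formula for $c^k_T$ cancels one factor $\rho_k$, and the remaining sum over $k$ becomes $\int_{-\infty}^\infty(\sum_s\mathcal{T}_s(\lambda)f_{T-s})(\sum_m\mathcal{T}_m(\lambda)g_{T-m})\,d\rho^N(\lambda)$. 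Evaluating this bilinear form on the coordinate vectors with $f_j=\delta_{jl}$ and $g_j=\delta_{jm}$ selects $s=T-l$ and the second factor $\mathcal{T}_{T-m}(\lambda)$, yielding $\{C^T_N\}_{l+1,m+1}$ exactly as claimed.

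\textbf{Main obstacle.} There is no genuine analytic difficulty here — every manipulation is a finite sum rearrangement. The only points requiring care are the Chebyshev indexing convention $\mathcal{T}_0=0$ (so that the convolution sums effectively begin at index $1$), the change of summation index $s\mapsto T-l$ in the connecting-operator step, and the observation that $\rho^N$ in (\ref{measure}) is a pure jump function whose increments are precisely the weights $1/\rho_k$, so that integration against $d\rho^N$ reproduces the required weighted eigenvalue sums.
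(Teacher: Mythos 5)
Your proposal is correct and follows essentially the same route the paper intends: the paper states that the proposition is "immediately deduced" from the representation (\ref{Jac_sol_rep_int}), the identity (\ref{con1}) and the definition (\ref{measure}), which is precisely your substitution of $c^k_t=\rho_k^{-1}\sum_{s}\mathcal{T}_s(\lambda_k)f_{t-s}$, the choice $f=\delta$, and the orthogonality $(\phi^k,\phi^l)=\delta_{kl}\rho_k$ in the quadratic form defining $C^T_N$. The only cosmetic slip is your aside about consistency with the $n=0$ line (that line is simply $v^f_{0,t}=f_t$ and is not produced by the spectral formula), but it does not affect the argument.
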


\subsection{Semi-infinite Jacobi matrix}

We consider an initial boundary value problem for a dynamical
system with discrete time associated with a semi-infinite Jacobi
matrix $A$:
\begin{equation}
\label{Jacobi_dyn} \left\{
\begin{array}l
u_{n,\,t+1}+u_{n,\,t-1}-a_{n}u_{n+1,\,t}-a_{n-1}u_{n-1,\,t}-b_nu_{n,\,t}=0,\quad n\in \mathbb{N},\, t\in \mathbb{N}_0,\\
u_{n,\,-1}=u_{n,\,0}=0,\quad n\in \mathbb{N}, \\
u_{0,\,t}=f_t,\quad t\in \mathbb{N}_0,
\end{array}\right.
\end{equation}
which is a discrete analog of an initial boundary value problem
for a wave equation with a potential on a half-line with the
Dirichlet control at zero. The solution to (\ref{Jacobi_dyn}) is
denoted by $u^f_{n,\,t}$. We fix some positive integer $T$ and
denote by $\mathcal{F}^T$ the \emph{outer space} of the system
(\ref{Jacobi_dyn}), the space of controls (inputs):
$\mathcal{F}^T:=\mathbb{R}^T$, $f\in \mathcal{F}^T$,
$f=(f_0,\ldots,f_{T-1})$.
\begin{lemma}
A solution to (\ref{Jacobi_dyn}) admits the representation
\begin{equation}
\label{Jac_sol_rep} u^f_{n,\,t}=\prod_{k=0}^{n-1}
a_kf_{t-n}+\sum_{s=n}^{t-1}w_{n,\,s}f_{t-s-1},\quad n,t\in
\mathbb{N},
\end{equation}
where $w_{n,s}$ satisfies certain Goursat problem.
\end{lemma}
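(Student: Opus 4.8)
The plan is to proceed by a standard propagation-of-singularities / d'Alembert-type argument adapted to the discrete setting. First I would use the finite speed of propagation already noted in the text: since the control enters only at $n=0$ and the equation couples $u_{n,t}$ to its spatial neighbours, one shows by a straightforward induction on $t$ that $u^f_{n,t}=0$ whenever $t<n$, and that $u^f_{n,n}=\left(\prod_{k=0}^{n-1}a_k\right)f_0$. This pins down the leading "wave front" term $\prod_{k=0}^{n-1}a_kf_{t-n}$ in (\ref{Jac_sol_rep}); here the product telescopes the coefficients $a_0=1,a_1,\dots,a_{n-1}$ picked up as the front passes through sites $1,\dots,n-1$. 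The idea is that the coefficient of $f_{t-n}$ in $u^f_{n,t}$ is independent of $t$ for the same reason.

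Next I would substitute the ansatz
\[
u^f_{n,t}=\prod_{k=0}^{n-1}a_kf_{t-n}+\sum_{s=n}^{t-1}w_{n,s}f_{t-s-1}
\]
into the difference equation of (\ref{Jacobi_dyn}) and collect coefficients of each $f_{t-s-1}$. Because the control $f$ is arbitrary, matching coefficients forces a recursion on the array $\{w_{n,s}\}$. The equation $u_{n,t+1}+u_{n,t-1}-a_nu_{n+1,t}-a_{n-1}u_{n-1,t}-b_nu_{n,t}=0$ turns, after reindexing the shifted arguments of $f$, into a second-order partial difference equation for $w_{n,s}$ in the discrete variables $n$ and $s$ — a discrete Goursat (characteristic Cauchy) problem. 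The "initial" data on the characteristics come from two sources: the boundary condition $u_{0,t}=f_t$ (equivalently $w_{0,s}$ is determined), and the compatibility at the wave front $s=n$, where plugging in the known value $u^f_{n,n}$ forces a relation that expresses $w_{n,n}$ (or $w_{n,n-1}+w_{n-1,n-1}$, depending on indexing) in terms of $\sum_{k\le n} b_k$ and the $a_k$'s. Together these determine all $w_{n,s}$ recursively, which both proves existence of the representation and identifies the Goursat problem referred to in the statement.

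The main obstacle I expect is purely bookkeeping: getting the index shifts exactly right when substituting the sum $\sum_{s=n}^{t-1}w_{n,s}f_{t-s-1}$ into a three-point-in-time, three-point-in-space stencil, so that the ranges of summation line up and the extracted recursion for $w_{n,s}$ is stated cleanly with the correct characteristic lines and the correct boundary relations. There is no conceptual difficulty — uniqueness of the solution to (\ref{Jacobi_dyn}) is clear by stepping forward in $t$, and finite speed of propagation guarantees the sum in (\ref{Jac_sol_rep}) is finite for each $(n,t)$ — but care is needed to verify that the ansatz is consistent at the corner $t=n$ and $t=n+1$ and that the lower limit $s=n$ in the sum is the right one. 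Once the recursion is written down, one reads off that $w_{n,s}$ solves a Goursat problem with data prescribed on $\{n=0\}$ and on the diagonal $\{s=n\}$, completing the proof.
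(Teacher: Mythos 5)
Your plan is correct and coincides with the derivation the paper relies on (the lemma is imported from \cite{MM,MM3}, where it is proved exactly this way): finite speed of propagation gives $u^f_{n,t}=0$ for $t<n$ and the front value $u^f_{n,n}=\prod_{k=0}^{n-1}a_k\,f_0$, and substituting the ansatz and matching coefficients of the arbitrary $f$ yields the discrete Goursat problem for $w_{n,s}$ with $w_{0,s}=0$ from the boundary condition and the diagonal relation $w_{n,n}=a_{n-1}w_{n-1,n-1}+b_n\prod_{k=0}^{n-1}a_k$, i.e.\ $w_{n,n}=\bigl(\prod_{k=0}^{n-1}a_k\bigr)\sum_{k=1}^{n}b_k$. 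The only remaining work is the index bookkeeping you already identified, which is routine.
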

The input $\longmapsto$ output correspondence in the system
(\ref{Jacobi_dyn}) is realized by a \emph{response operator}:
$R^T:\mathcal{F}^T\mapsto \mathbb{R}^T$ defined by the rule
\begin{equation*}
\left(R^Tf\right)_t=u^f_{1,\,t}, \quad t=1,\ldots,T.
\end{equation*}
This operator plays the role of inverse data, the corresponding
inverse problem was considered in \cite{MM,MM3}. The convolution
kernel of $R^T$ is called a \emph{response vector}, in accordance
with (\ref{Jac_sol_rep}) one has that
$r=(r_0,r_1,\ldots,r_{T-1})=(1,w_{1,1},w_{1,2},\ldots w_{1,T-1})$:
\begin{eqnarray}
\label{R_def}
\left(R^Tf\right)_t=u^f_{1,\,t}=f_{t-1}+\sum_{s=1}^{t-1}
w_{1,\,s}f_{t-1-s}
\quad t=1,\ldots,T.\\
\notag \left(R^Tf\right)=r*f_{\cdot-1}.
\end{eqnarray}
 By choosing a special control
 $f=\delta=(1,0,0,\ldots)$, the kernel of the response operator can be determined as
\begin{equation*}
\left(R^T\delta\right)_t=u^\delta_{1,\,t}= r_{t-1}.
\end{equation*}

For a fixed $T\in \mathbb{N}$ we introduce the \emph{inner space}
of the dynamical system (\ref{Jacobi_dyn})
$\mathcal{H}^T:=\mathbb{R}^T$, $h\in \mathcal{H}^T$,
$h=(h_1,\ldots, h_T)$, the space of states. The wave
$u^f_{\cdot,\,T}$ is considered as a state of the system
(\ref{Jacobi_dyn}) at the moment $t=T$. By (\ref{Jac_sol_rep}) we
have that $u^f_{\cdot,\,T}\in \mathcal{H}^T$. The input
$\longmapsto$ state correspondence of the system
(\ref{Jacobi_dyn}) is realized by a \emph{control operator}
$W^T:\mathcal{F}^T\mapsto \mathcal{H}^T$, defined by the rule
\begin{equation*}
\left(W^Tf\right)_n:=u^f_{n,\,T},\quad n=1,\ldots,T.
\end{equation*}
From (\ref{Jac_sol_rep}) we deduce the representation for $W^T$:
\begin{equation*}
\left(W^Tf\right)_n=u^f_{n,\,T}=\prod_{k=0}^{n-1}
a_kf_{T-n}+\sum_{s=n}^{T-1}w_{n,\,s}f_{T-s-1},\quad n=1,\ldots,T.
\end{equation*}
Or in matrix form:
\begin{equation}
\label{WT}
W^Tf=\begin{pmatrix} u_{1,\,T}\\
u_{2,\,T}\\
\cdot\\
u_{k,\,T}\\
\cdot\\
u_{T,\,T}
\end{pmatrix}=\begin{pmatrix}
w_{1,T-1}& w_{1,T-2} & w_{1,T-3} & \ldots & \ldots & 1\\
w_{2,T-1} & w_{2,T-1} & \ldots & \ldots & a_1 & 0\\
\cdot & \cdot & \cdot & \cdot & \cdot & \cdot \\
 w_{k, T-1} & \ldots & \prod_{j=0}^{k-1}
a_j& 0 & \ldots &0\\
\cdot & \cdot & \cdot & \cdot & \cdot & \cdot \\
\prod_{k=0}^{T-1} a_{k} & 0 & 0 & 0 & \ldots & 0
\end{pmatrix}
\begin{pmatrix} f_{0}\\
f_{1}\\
\cdot\\
f_{T-k-1}\\
\cdot\\
f_{T-1}
\end{pmatrix}.
\end{equation}

The following statement is equivalent to a boundary
controllability of (\ref{Jacobi_dyn}):
\begin{lemma}
\label{teor_control} The operator $W^T$ is an isomorphism between
$\mathcal{F}^T$ and $\mathcal{H}^T$.
\end{lemma}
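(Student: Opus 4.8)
The plan is to exhibit $W^T$ explicitly as a linear map between two $T$-dimensional spaces and show it is invertible; since $\mathcal{F}^T$ and $\mathcal{H}^T$ have the same dimension, it suffices to check that the matrix of $W^T$ in the standard bases is nonsingular. That matrix is already written out in display \eqref{WT}, so the work is to read off its structure. The key observation is that if we list the entries of $f$ in the order $f_0,f_1,\ldots,f_{T-1}$ and the components of the state in the order $u_{1,T},u_{2,T},\ldots,u_{T,T}$, then the matrix is \emph{anti-triangular}: the $(k,m)$ entry vanishes whenever the ``diagonal'' index is exceeded, and along the main anti-diagonal the entries are exactly $1,a_1,a_1a_2,\ldots,\prod_{j=0}^{T-1}a_j$, i.e.\ the partial products $\prod_{j=0}^{k-1}a_j$.

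First I would make this precise: from the representation \eqref{Jac_sol_rep} of $u^f_{n,T}$, the coefficient of $f_{T-n}$ is $\prod_{k=0}^{n-1}a_k$, and $u^f_{n,T}$ involves only $f_0,\ldots,f_{T-n}$ (the sum over $s$ runs from $n$ to $T-1$, hitting $f_{T-s-1}$ with $T-s-1\le T-n-1$). Hence in the matrix of \eqref{WT} the $n$-th row has zeros in the last $n-1$ columns and entry $\prod_{k=0}^{n-1}a_k$ in column $T-n+1$. This is precisely an anti-triangular matrix whose anti-diagonal entries are the numbers $\prod_{k=0}^{n-1}a_k$, $n=1,\ldots,T$.

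Next I would compute the determinant. Reversing the order of the columns (a permutation, which only changes the sign of the determinant) turns $W^T$ into a genuine lower- or upper-triangular matrix with the numbers $\prod_{k=0}^{n-1}a_k$ on the diagonal, so
\begin{equation*}
\bigl|\det W^T\bigr| \;=\; \prod_{n=1}^{T}\,\prod_{k=0}^{n-1} a_k \;=\; \prod_{k=0}^{T-1} a_k^{\,T-k}.
\end{equation*}
Since $a_0=1$ and all $a_k>0$ by hypothesis, this product is strictly positive, hence $W^T$ is invertible. An invertible linear map between finite-dimensional spaces of equal dimension is an isomorphism, which is the assertion of the lemma.

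I do not expect a serious obstacle here; the only point requiring a little care is bookkeeping — correctly matching the ordering of controls $f_0,\ldots,f_{T-1}$ with the ordering of states $u_{1,T},\ldots,u_{T,T}$ so that the claimed anti-triangular shape of \eqref{WT} is genuinely verified from \eqref{Jac_sol_rep}, rather than merely read off the schematic matrix. One should also note that the finite speed of propagation built into \eqref{Jacobi_dyn} (each $u_{n,T}$ depends only on $f_0,\ldots,f_{T-n}$) is exactly what forces the triangular structure, and that $a_0=1$ guarantees the top anti-diagonal entry equals $1$; these are the facts that make the determinant computation go through cleanly.
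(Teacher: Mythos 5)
Your proposal is correct and is essentially the argument the paper intends: the lemma is stated immediately after the matrix representation \eqref{WT}, whose anti-triangular form with nonzero anti-diagonal entries $\prod_{k=0}^{n-1}a_k>0$ (read off from \eqref{Jac_sol_rep}, exactly as you do) is what makes $W^T$ invertible. Your explicit determinant bookkeeping just spells out what the paper (and the cited works \cite{MM,MM3}) leave implicit.
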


We introduce the \emph{connecting operator} $C^T:
\mathcal{F}^T\mapsto \mathcal{F}^T$ for the system
(\ref{Jacobi_dyn}), by the quadratic form: for arbitrary $f,g\in
\mathcal{F}^T$ we define
\begin{equation}
\label{C_T_def} \left(C^T
f,g\right)_{\mathcal{F}^T}=\left(u^f_{\cdot,\,T},
u^g_{\cdot,\,T}\right)_{\mathcal{H}^T}=\left(W^Tf,W^Tg\right)_{\mathcal{H}^T}.
\end{equation}
That is $C^T=\left(W^T\right)^*W^T$. The fact that the connecting
operator can be represented in terms of inverse data is crucial in
the BC method.
\begin{theorem}
The connecting operator $C^T$ is an isomorphism in
$\mathcal{F}^T$, it admits the representation in terms of dynamic
inverse data:
\begin{equation}
\label{C_T_repr} C^T=C^T_{ij},\quad
C^T_{ij}=\sum_{k=0}^{T-\max{i,j}}r_{|i-j|+2k},\quad r_0=a_0=1.
\end{equation}
\begin{equation*}
C^T=
\begin{pmatrix}
r_0+r_2+\ldots+r_{2T-2} & r_1+\ldots+r_{2T-3} & \ldots &
r_T+r_{T-2} &
r_{T-1}\\
r_1+r_3+\ldots+r_{2T-3} & r_0+\ldots+r_{2T-4} & \ldots & \ldots
&r_{T-2}\\
\cdot & \cdot & \cdot & \cdot & \cdot \\
r_{T-3}+r_{T-1}+r_{T+1} &\ldots & r_0+r_2+r_4 & r_1+r_3 & r_2\\
r_{T}+r_{T-2}&\ldots &r_1+r_3&r_0+r_2&r_1 \\
r_{T-1}& r_{T-2}& \ldots & r_1 &r_0
\end{pmatrix}.
\end{equation*}
\end{theorem}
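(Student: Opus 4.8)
The plan is to compute the quadratic form $(C^T f, g)_{\mathcal{F}^T}$ directly from the definition $(C^T f,g) = (u^f_{\cdot,T}, u^g_{\cdot,T})_{\mathcal{H}^T}$ using the explicit representation (\ref{Jac_sol_rep}) for the solution, and to show that all terms involving the unknown Goursat coefficients $w_{n,s}$ and the products $\prod a_k$ collapse into expressions involving only the response vector $r=(1,w_{1,1},\ldots)$. First I would recall from (\ref{R_def}) that $w_{1,s}=r_s$ for $s\geq 1$ and $r_0=a_0=1$, so the first component $u^f_{1,T}$ is already expressed purely in terms of $r$. The isomorphism statement for $C^T$ is immediate once the formula (\ref{C_T_repr}) is established, because $C^T=(W^T)^*W^T$ with $W^T$ an isomorphism by \lemref{teor_control}, hence $C^T$ is positive definite, in particular invertible; so the real content is the identity (\ref{C_T_repr}).

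The cleanest route to (\ref{C_T_repr}) is via the spectral representation rather than the time-domain convolution bookkeeping. Although formula (\ref{SP_mes_int}) is stated for the finite system, the same derivation applies to the semi-infinite system: one has $u^f_{n,t}=\int_{-\infty}^\infty \sum_{k=1}^t \mathcal{T}_k(\lambda)f_{t-k}\varphi_n(\lambda)\,d\rho(\lambda)$ with $\rho$ the spectral measure of $A$, and by orthonormality of the polynomials $\varphi_n$ in $L^2_\rho$ one gets
\begin{equation*}
(C^T f,g)_{\mathcal{F}^T}=\int_{-\infty}^\infty\Bigl(\sum_{k=1}^T \mathcal{T}_k(\lambda)f_{T-k}\Bigr)\Bigl(\sum_{m=1}^T \mathcal{T}_m(\lambda)g_{T-m}\Bigr)\,d\rho(\lambda),
\end{equation*}
so that $\{C^T\}_{l+1,m+1}=\int \mathcal{T}_{T-l}(\lambda)\mathcal{T}_{T-m}(\lambda)\,d\rho(\lambda)$ for $l,m=0,\ldots,T-1$. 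Then I would invoke the product formula for Chebyshev polynomials of the second kind, $\mathcal{T}_p(\lambda)\mathcal{T}_q(\lambda)=\sum_{j} \mathcal{T}_{|p-q|+1+2j}(\lambda)$ summed over $0\le j\le \min(p,q)-1$ (equivalently $\mathcal{T}_p\mathcal{T}_q=\mathcal{T}_{p+q-1}+\mathcal{T}_{p+q-3}+\ldots+\mathcal{T}_{|p-q|+1}$), and integrate term by term. Using $r_{t-1}=\int \mathcal{T}_t(\lambda)\,d\rho(\lambda)$ (the analog of (\ref{Resp_int_spectr}) for the semi-infinite system), each $\int \mathcal{T}_{|p-q|+1+2j}\,d\rho = r_{|p-q|+2j}$. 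Setting $p=T-l$, $q=T-m$, so $|p-q|=|l-m|=|i-j|$ with $i=l+1,j=m+1$, and $\min(p,q)=T-\max(l,m)=T-\max(i,j)+1$, the sum over $j$ from $0$ to $\min(p,q)-1=T-\max(i,j)$ yields exactly $\sum_{k=0}^{T-\max(i,j)} r_{|i-j|+2k}$, which is (\ref{C_T_repr}).

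The main obstacle is justifying the spectral representation for the semi-infinite system with the correct handling of the measure $\rho$ (which need not have compact support and may not even be unique if the Jacobi matrix is not in the limit-point case), and making sure the term-by-term integration of the finite Chebyshev product sum against $d\rho$ is legitimate — this is fine since each $\mathcal{T}_k$ is a polynomial and all moments exist by construction, but it must be stated. An alternative, fully elementary proof avoids the measure entirely: expand $(u^f_{\cdot,T},u^g_{\cdot,T})_{\mathcal{H}^T}=\sum_{n=1}^T u^f_{n,T}u^g_{n,T}$ using (\ref{Jac_sol_rep}) and the matrix form (\ref{WT}), and show by a direct (if tedious) double-summation rearrangement, using the Goursat relations satisfied by $w_{n,s}$, that the products $\prod_{k=0}^{n-1}a_k$ and the off-diagonal $w_{n,s}$ with $n\geq 2$ telescope so that only combinations of $r_t=w_{1,t}$ survive; the Chebyshev product identity then reappears in discrete guise. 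I would present the spectral-representation argument as the main proof and remark that the purely dynamic computation gives the same result, consistent with the BC-method philosophy that $C^T$ is determined by inverse data alone.
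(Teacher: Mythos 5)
Your proposal is correct, and it takes a genuinely different route from the one behind the paper's statement. The paper gives no proof here: the representation (\ref{C_T_repr}) is quoted from \cite{MM,MM3}, where it is obtained by a time-domain (Blagoveshchenskii-type) computation with the discrete wave equation; the fingerprint of that derivation is Corollary \ref{Corl}, the difference equation satisfied by the kernel $C^T_{ij}$ with boundary data $r_{T-i}$, from which (\ref{C_T_repr}) follows by summing along characteristics. You instead pass through the spectral representation $\{C^T\}_{l+1,m+1}=\int \mathcal{T}_{T-l}\mathcal{T}_{T-m}\,d\rho$ and the linearization formula $\mathcal{T}_p\mathcal{T}_q=\sum_{k=0}^{\min(p,q)-1}\mathcal{T}_{|p-q|+1+2k}$ together with $r_{t-1}=\int\mathcal{T}_t\,d\rho$; your index bookkeeping ($p=T-l$, $q=T-m$, $\min(p,q)-1=T-\max(i,j)$) does reproduce $\sum_{k=0}^{T-\max(i,j)}r_{|i-j|+2k}$ exactly, and the argument is not circular within the paper, since (\ref{SP_mes_int}), (\ref{SP_mes_d}) and (\ref{Resp_spectr}) are derived from the spectral representation of solutions and $C^T=(W^T)^*W^T$ without any use of (\ref{C_T_repr}). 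The measure-theoretic caveat you flag is genuinely harmless and worth stating the way you do: every integrand is a polynomial of degree at most $2T-1$, the orthonormality of $\varphi_1,\ldots,\varphi_T$ and the identity $r_{t-1}=\int\mathcal{T}_t\,d\rho$ involve only finitely many moments, hence hold for any spectral measure of $A$ (or one may simply work with $\rho^N$, $N\geqslant T$, using $C^T=C^T_N$). The isomorphism claim via $C^T=(W^T)^*W^T$ and Lemma \ref{teor_control} coincides with the paper's reasoning. Comparing the two routes: the dynamic proof stays entirely within inverse data, yields Corollary \ref{Corl} as a byproduct, and works in settings where no spectral representation is available; your spectral proof is shorter and makes transparent the later reduction of $C^N$ and $B^N$ to the Hankel matrices $S_0^N$, $S_1^N$ in Theorem \ref{Propos_EL}, which uses exactly the same Chebyshev-to-monomial mechanism.
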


One can observe \cite{MM} that $C^T_{ij}$ satisfies the difference
boundary problem:
\begin{corollary}
\label{Corl} The kernel of $C^T$ satisfies
\begin{equation*}
\left\{
\begin{array}l
C^T_{i,j+1}+C^T_{i,j-1}-C^T_{i+1,j}-C^T_{i-1,j}=0,\\
C^T_{i,T}=r_{T-i},\,\,C^T_{T,j}=r_{T-j},\,\, r_0=1.
\end{array}
\right.
\end{equation*}
\end{corollary}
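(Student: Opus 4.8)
The plan is to read the difference equation off the wave dynamics itself, through the identity $C^T=\left(W^T\right)^*W^T$. First I would express the entries of $C^T$ via the wave $u^\delta$. If $e_j\in\mathcal{F}^T$ denotes the control whose only nonzero component is $f_{j-1}=1$, then time-invariance of (\ref{Jacobi_dyn}) together with the vanishing initial data yields $u^{e_j}_{n,\,t}=u^\delta_{n,\,t-j+1}$ (with the convention $u^\delta_{n,\,\tau}:=0$ for $\tau\leqslant0$), so that
\begin{equation*}
C^T_{ij}=\left(W^Te_j,\,W^Te_i\right)_{\mathcal{H}^T}=\left(u^\delta_{\cdot,\,T-j+1},\,u^\delta_{\cdot,\,T-i+1}\right)_{\mathcal{H}^T}=:F(T-i+1,\,T-j+1).
\end{equation*}
By (\ref{Jac_sol_rep}) the state $u^\delta_{\cdot,\,t}$ is supported in $n\leqslant t$, so all the sums below are finite.

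Next I would prove the symmetric difference identity $F(t+1,s)+F(t-1,s)=F(t,s+1)+F(t,s-1)$ for $t,s\geqslant1$. For $t\geqslant1$ the control value entering the top row of (\ref{Jacobi_dyn}) vanishes, $u^\delta_{0,\,t}=0$, hence $u^\delta_{n,\,t+1}+u^\delta_{n,\,t-1}=\left(Au^\delta_{\cdot,\,t}\right)_n$ for every $n\geqslant1$, where $A$ is the semi-infinite Jacobi operator (\ref{Jac_matr}). Since $u^\delta_{\cdot,\,t}$ and $u^\delta_{\cdot,\,s}$ are finitely supported and $A$ is symmetric, summation by parts gives
\begin{equation*}
F(t+1,s)+F(t-1,s)=\sum_{n\geqslant1}\left(Au^\delta_{\cdot,\,t}\right)_n\,u^\delta_{n,\,s}=\sum_{n\geqslant1}u^\delta_{n,\,t}\,\left(Au^\delta_{\cdot,\,s}\right)_n=F(t,s+1)+F(t,s-1),
\end{equation*}
the last equality using $s\geqslant1$ in the same way. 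Putting $t=T-i+1$, $s=T-j+1$ turns this into $C^T_{i+1,j}+C^T_{i-1,j}=C^T_{i,j+1}+C^T_{i,j-1}$, which is the asserted equation.

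It remains to pin down the boundary data. Evaluating (\ref{Jacobi_dyn}) at $t=0$ with $f=\delta$ gives $u^\delta_{1,\,1}=a_0=1$ and $u^\delta_{n,\,1}=0$ for $n\geqslant2$, i.e. $u^\delta_{\cdot,\,1}$ is the first coordinate vector of $\mathcal{H}^T$. Hence $C^T_{i,T}=F(T-i+1,\,1)=u^\delta_{1,\,T-i+1}=r_{T-i}$ (using $u^\delta_{1,\,t}=r_{t-1}$), and $C^T_{T,j}=r_{T-j}$ follows by the symmetry $C^T_{ij}=C^T_{ji}$; in particular the corner value is $C^T_{T,T}=r_0=u^\delta_{1,\,1}=1$.

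The one genuinely delicate point is the summation by parts: one must check that $Au^\delta_{\cdot,\,t}$ agrees with $u^\delta_{\cdot,\,t+1}+u^\delta_{\cdot,\,t-1}$ in every row, the only possible discrepancy sitting in the row $n=1$ where the recurrence of (\ref{Jacobi_dyn}) carries the extra term $a_0u^\delta_{0,\,t}$, and that this term vanishes exactly because $t\geqslant1$; the absence of a boundary contribution from $n\to\infty$ is guaranteed by the finite speed of propagation. For completeness I note two alternative routes. A purely combinatorial one starts from the closed form (\ref{C_T_repr}): split into the cases $i<j$, $i=j$ (where the equation is immediate from $C^T_{ij}=C^T_{ji}$) and $i>j$, and telescope the four arithmetic-progression sums of the $r_k$'s. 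A spectral one starts from the representation (\ref{SP_mes_int}), legitimate with $\rho=\rho^N$ for any $N\geqslant T$ by Remark~\ref{Rem1}: there the equation is nothing but $\lambda\,\mathcal{T}_{T-i+1}(\lambda)\mathcal{T}_{T-j+1}(\lambda)=\mathcal{T}_{T-i+1}(\lambda)\,\lambda\,\mathcal{T}_{T-j+1}(\lambda)$ combined with the three-term recurrence (\ref{Chebysh}).
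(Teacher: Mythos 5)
Your proof is correct, and it takes a different route from the paper's. The paper in fact offers no argument at all for this corollary: it merely remarks (citing \cite{MM}) that one can observe the difference boundary problem from the explicit representation (\ref{C_T_repr}), i.e.\ the intended verification is the purely combinatorial telescoping of the four sums $\sum_k r_{|i-j|+2k}$ that you mention only as an alternative at the end. Your main argument instead returns to the definition $C^T=\left(W^T\right)^*W^T$: you write $C^T_{ij}$ as the $\mathcal{H}^T$-inner product of two time-shifted copies of the wave $u^\delta$ (time-invariance plus zero initial data), obtain the interior equation from the symmetry of the Jacobi operator together with the fact that the inhomogeneity $a_0u^\delta_{0,t}$ in the row $n=1$ vanishes for $t\geqslant 1$, and read off the boundary rows from $u^\delta_{\cdot,1}=(1,0,\ldots,0)$ (using $a_0=1$) and $u^\delta_{1,t}=r_{t-1}$, the corner value $r_0=1$ coming out consistently. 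This buys structural insight -- it exposes the wave-equation mechanism that produces (\ref{C_T_repr}) in the first place and does not depend on having that closed formula -- at the cost of some bookkeeping: the convention $u^\delta_{n,\tau}=0$ for $\tau\leqslant 0$, the restriction of the difference equation to interior indices $1<i,j<T$, and the check (which you make explicitly, and correctly, via finite propagation speed) that no contributions arise from large $n$, so that the $\mathcal{H}^T$-inner product may be replaced by the full sum over $n\geqslant 1$. The paper's implicit route is a shorter mechanical check once (\ref{C_T_repr}) is granted; your spectral alternative via (\ref{SP_mes_int}) and the three-term recurrence (\ref{Chebysh}) is also valid and is essentially the same cancellation seen on the transform side.
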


With the matrix $A$ we associate the operator $A$ (we keep the
same notation), defined on $l^2\ni \phi=(\phi_1,\phi_2,\ldots)$,
and given by
\begin{align*}
(A\phi)_n&=a_{n}\phi_{n+1}+a_{n-1}\phi_{n-1}+b_n\phi_n,\quad
n\geqslant 2,\\ 
(A\phi)_1&=b_1\phi_1+a_1\phi_2,\quad n=1. 
\end{align*}
By $d\rho(\lambda)$ we denote the spectral measure of $A$
(non-unique if $A$ is in the limit circle case at infinity), see
\cite{Ahiez,A}.

The Remark \ref{Rem1} in particular implies that
\begin{eqnarray}
\label{R_eqv} R^{2N-2} = R^{2N-2}_{N},\\
\label{Rav_bc} u^f_{n,\,t}=v^f_{n,\,t},\quad n\leqslant t\leqslant
N,\quad \text{and}\quad W^N=W^N_{N}.
\end{eqnarray}
Thus due to (\ref{R_eqv}), we have that $r_{t-1}=r_{t-1}^{N}$,
$t=1,\ldots,2N$. On the other hand, taking into the account
(\ref{Rav_bc}), we can see that $C^T=C^T_{N}$ with $T\leqslant N$.
Thus, taking in (\ref{Resp_int_spectr}), (\ref{SP_mes_int})
$N\to\infty$, we obtain the
\begin{proposition}
The entries of the response vector of (\ref{Jacobi_dyn}) and of
the matrix of the connecting operator $C^T$ admit the spectral
representation:
\begin{eqnarray}
r_{t-1}=\int_{-\infty}^\infty
\mathcal{T}_t(\lambda)\,d\rho(\lambda),\quad t\in
\mathbb{N},\label{Resp_spectr}\\
\{C^T\}_{l+1,\,m+1}=\int_{-\infty}^\infty
\mathcal{T}_{T-l}(\lambda)\mathcal{T}_{T-m}(\lambda)\,d\rho(\lambda),
\quad l,m=0,\ldots,T-1. \label{SP_mes_d}
\end{eqnarray}
\end{proposition}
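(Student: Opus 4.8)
\emph{Sketch of the argument.} The plan is to deduce both identities by letting $N\to\infty$ in the finite-dimensional spectral representations (\ref{Resp_int_spectr}) and (\ref{SP_mes_int}), using the finite speed of wave propagation to guarantee that the left-hand sides stabilize. First I would fix $t\in\mathbb{N}$ and choose $N$ large enough (any $N$ with $2N\geqslant t$ will do); by Remark \ref{Rem1}, equivalently by (\ref{R_eqv}), the entry $r_{t-1}$ of the response vector of (\ref{Jacobi_dyn}) does not yet depend on the boundary condition at $n=N+1$, so that $r_{t-1}=r_{t-1}^{N}$, and (\ref{Resp_int_spectr}) gives $r_{t-1}=\int_{-\infty}^{\infty}\mathcal{T}_t(\lambda)\,d\rho^{N}(\lambda)$. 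Likewise, fixing $T$ and taking any $N\geqslant T$, the identity $u^f_{n,t}=v^f_{n,t}$ for $n\leqslant t\leqslant N$ from (\ref{Rav_bc}) yields $C^T=C^T_{N}$, whence by (\ref{SP_mes_int}) we have $\{C^T\}_{l+1,m+1}=\int_{-\infty}^{\infty}\mathcal{T}_{T-l}(\lambda)\mathcal{T}_{T-m}(\lambda)\,d\rho^{N}(\lambda)$ for $l,m=0,\dots,T-1$. Hence everything reduces to computing $\lim_{N\to\infty}\int p\,d\rho^{N}$ for a \emph{fixed} polynomial $p$ (here $p=\mathcal{T}_t$, respectively $p=\mathcal{T}_{T-l}\mathcal{T}_{T-m}$).

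For this, the key observation is that $\rho^{N}$ from (\ref{measure}) is the spectral measure of $A^N$ attached to the first coordinate vector $e_1$: since $\phi^k_1=\phi_1(\lambda_k)=1$ and $\|\phi^k\|^2=\rho_k$, the normalized eigenvectors $u_k=\phi^k/\sqrt{\rho_k}$ satisfy $(u_k)_1^2=1/\rho_k$, so $\int\lambda^j\,d\rho^{N}(\lambda)=\sum_k\lambda_k^j(u_k)_1^2=\bigl((A^N)^j\bigr)_{11}$ for every $j\in\mathbb{N}_0$. Because $A$ is tridiagonal, a closed walk of length $j$ based at the vertex $1$ can only reach vertices $1,\dots,\lfloor j/2\rfloor+1$; therefore $\bigl((A^N)^j\bigr)_{11}$ is independent of $N$ once $N\geqslant\lfloor j/2\rfloor+1$, and equals the $l^2$-pairing $(A^j e_1,e_1)$ for the semi-infinite matrix. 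By the definition of a spectral measure of $A$ one has $(A^j e_1,e_1)=\int_{-\infty}^{\infty}\lambda^j\,d\rho(\lambda)$; note this value is the same for \emph{every} spectral measure $\rho$ of $A$ in the limit circle case, since $A^j e_1$ is a finitely supported sequence and the $j$-th moment involves only finitely many of the coefficients $a_n,b_n$. Expanding $p=\sum_j c_j\lambda^j$ and summing, $\int p\,d\rho^{N}=\int p\,d\rho$ as soon as $N>\lfloor(\deg p)/2\rfloor+1$.

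Combining the two steps then finishes the proof: for $N$ large enough we get $r_{t-1}=\int\mathcal{T}_t\,d\rho^{N}=\int\mathcal{T}_t\,d\rho$, which is (\ref{Resp_spectr}), and $\{C^T\}_{l+1,m+1}=\int\mathcal{T}_{T-l}\mathcal{T}_{T-m}\,d\rho^{N}=\int\mathcal{T}_{T-l}\mathcal{T}_{T-m}\,d\rho$, which is (\ref{SP_mes_d}). I expect the only genuinely delicate point to be the non-uniqueness of $\rho$ when $A$ is in the limit circle case at infinity; this is harmless here precisely because each identity we need is an integral of a fixed polynomial against $\rho$, and such integrals depend only on the moment sequence $\bigl((A^j)_{11}\bigr)_{j\geqslant0}$, which is determined by finitely many entries of $A$ and hence is common to all solutions of the moment problem. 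As an alternative to the explicit walk-counting, one could instead invoke the classical Weyl-type result (see \cite{Ahiez,A}) that the truncated measures $\rho^{N}$ converge weakly to a spectral measure of $A$ and pass to the limit directly under the polynomial integrands.
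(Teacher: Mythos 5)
Your argument is correct and follows essentially the same route as the paper: both reduce to the finite system via finite speed of propagation (i.e.\ (\ref{R_eqv}), (\ref{Rav_bc}), giving $r_{t-1}=r^N_{t-1}$ and $C^T=C^T_N$ for $N$ large) and then pass from the finite spectral representations (\ref{Resp_int_spectr}), (\ref{SP_mes_int}) to the measure $\rho$ of the semi-infinite matrix. Your moment-stabilization argument, $\int\lambda^j\,d\rho^N=\bigl((A^N)^j\bigr)_{11}=(A^je_1,e_1)=\int\lambda^j\,d\rho$ for $N$ large (uniform over all spectral measures in the limit circle case), simply makes explicit the limiting step that the paper states only as ``taking $N\to\infty$''.
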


\section{Truncated moment problem. Recovering "Dirichlet" spectral data from dynamic data.}

We make the following observation: in the moment problem one
recovers the measure from the given set of moments
(\ref{Moment_eq}), in the dynamic inverse problem \cite{MM,MM3}
one recovers the Jacobi matrix from the given response vector
(\ref{R_def}). The spectral representation of response vector
(\ref{Resp_spectr}),the results from the previous section and from
\cite{MM,MM3} implies the following
\begin{remark}
The knowledge of the finite set of moments
$\{s_0,s_1,\ldots,s_{2N-2}\}$ is equivalent to the knowledge of
$\{r_0,r_1,\ldots,r_{2N-2}\}$, wrom which it is possible to
recover Jacobi matrix $A^N\in \mathbb{R}^{N\times N}$ whose
elements can be thought of as a coefficients in dynamical system
(\ref{Jacobi_dyn_int}) with Dirichlet boundary condition at
$n=N+1$, or $N\times N$ block in semi-infinite Jacobi matrix in
(\ref{Jacobi_dyn_int}) with no condition at the right end.
\end{remark}

\begin{definition}
By a solution of a truncated moment problem of order $N$ we call a
Borel measure $d\rho(\lambda)$ on $\mathbb{R}$ such that
equalities (\ref{Moment_eq}) with this measure hold for
$k=0,1,\ldots,2N-2.$
\end{definition}

In \cite{MM3} the authors proved the following
\begin{theorem}
\label{Th_char} The vector $(r_0,r_1,r_2,\ldots,r_{2N-2})$ is a
response vector for the dynamical system (\ref{Jacobi_dyn_int}) if
and only if the matrix $C^N$ defined by (\ref{SP_mes_d}),
(\ref{C_T_repr}) is positive definite.
\end{theorem}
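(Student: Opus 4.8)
The plan is to prove both directions by exploiting the identity $C^N=(W^N)^*W^N$ together with the controllability result in Lemma~\ref{teor_control}, and the spectral formula \eqref{SP_mes_d} connecting $C^N$ to the measure $\rho^N$. First I would establish the forward direction: suppose $(r_0,\ldots,r_{2N-2})$ genuinely arises as the response vector of a system \eqref{Jacobi_dyn_int} with some positive $a_1,\ldots,a_{N-1}$ and real $b_1,\ldots,b_N$. Then the operator $W^N=W^N_N$ is built from those coefficients, and by Lemma~\ref{teor_control} it is an isomorphism of $\mathcal{F}^N$ onto $\mathcal{H}^N$. Hence for any nonzero $f\in\mathcal{F}^N$ we have $(C^Nf,f)_{\mathcal{F}^N}=\|W^Nf\|_{\mathcal{H}^N}^2>0$, so $C^N$ is positive definite. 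The only thing to check carefully here is that the matrix $C^N$ reconstructed from $(r_0,\ldots,r_{2N-2})$ via \eqref{C_T_repr} really is the connecting operator of that system — but this is exactly the content of \eqref{C_T_repr} and Remark~\ref{Rem1} (the entries of $C^N$ involve only $r_0,\ldots,r_{2N-2}$, which by \eqref{R_eqv} coincide with $r_0^N,\ldots,r_{2N-2}^N$ and hence depend only on the block $A^N$).

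For the converse, assume the matrix $C^N$ assembled from the given numbers $(r_0,\ldots,r_{2N-2})$ through \eqref{C_T_repr} is positive definite; I must produce positive $a_k$ and real $b_k$ whose dynamical system \eqref{Jacobi_dyn_int} has precisely this response vector. The natural route is to use the Boundary Control machinery in reverse. Since $C^N>0$, I can define an inner product on $\mathcal{F}^N$ by $\langle f,g\rangle:=(C^Nf,g)_{\mathcal{F}^N}$; this makes $\mathcal{F}^N$ a model of the inner space $\mathcal{H}^N$, with $W^N$ acting as the (abstract) identity. The shift structure of the dynamics — encoded in Corollary~\ref{Corl}, i.e.\ that $C^N_{ij}$ solves the discrete wave equation with boundary data $r$ — lets one read off the action of $A^N$ on this model space: concretely, one writes the finite difference operator corresponding to \eqref{Jacobi_dyn_int} and uses the Krein-type relation between $C^N$ and the ``shifted'' connecting operator to extract the coefficients. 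The recovered $a_k$ are positive and $b_k$ real precisely because $C^N>0$ and $C^N$ is real symmetric; then one verifies by the uniqueness in the forward problem (Proposition with \eqref{Jac_sol_rep_int}) that the so-constructed Jacobi block reproduces $(r_0,\ldots,r_{2N-2})$.

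The main obstacle is the converse direction: turning the abstract positivity of $C^N$ into an honest Jacobi matrix with the sign constraints $a_k>0$. This is where the real work lies, since one must show the reconstruction procedure does not degenerate (no division by zero, all $a_k$ come out strictly positive) at every step $k=1,\ldots,N-1$, and that is exactly the place where positive definiteness of $C^N$ — rather than mere invertibility — is used. I expect this to be handled either by an inductive argument on $N$ (peeling off one coefficient at a time, each step requiring positivity of a principal submatrix of $C^N$, which follows from $C^N>0$) or by transporting the known solvability of the finite-dimensional generalized spectral problem of Theorems~\ref{teor}–\ref{Propos_EL}, together with the fact (from \eqref{SP_mes_int}, \eqref{SP_mes_d}) that $C^N>0$ is equivalent to the existence of a genuine $N$-point spectral measure $\rho^N$, from which $A^N$ is reconstructed by the classical orthogonal-polynomial recursion \eqref{Phi_def}. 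Everything else — the forward implication and the bookkeeping identifying the assembled matrix with $(W^N)^*W^N$ — is routine given the results already in hand.
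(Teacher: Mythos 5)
The paper itself does not prove this theorem --- it is quoted from \cite{MM3} --- so there is no in-paper argument to compare with, and your attempt has to stand on its own. Your forward direction does: once the data come from a genuine system (\ref{Jacobi_dyn_int}), the matrix built from $(r_0,\dots,r_{2N-2})$ by (\ref{C_T_repr}) is $\left(W^N_N\right)^*W^N_N$ (using (\ref{R_eqv}), (\ref{Rav_bc})), and invertibility of $W^N_N$ (clear from the triangular structure (\ref{WT}), or Lemma \ref{teor_control} transferred to the finite system) gives $C^N>0$. That half is routine and correct.

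The converse, however, is the entire content of the theorem, and in your proposal it remains a plan rather than a proof: ``I expect this to be handled either by an inductive argument \dots or by transporting \dots'' is precisely the step that needs to be carried out. Two concrete holes. First, you want to use $C^N>0\Leftrightarrow S_0^N>0$ via Theorem \ref{Propos_EL}, but the paper's proof of (\ref{CT_S0}) uses the spectral representation (\ref{SP_mes_d}), i.e.\ it already assumes the data are generated by a measure --- exactly what you are trying to establish; to avoid circularity you must observe that (\ref{CT_S0}) is a purely linear identity in the data (both sides are fixed linear combinations of $s_0,\dots,s_{2N-2}$ once $s$ is defined from $r$ by inverting (\ref{Resp_and_moments})), verified e.g.\ on the moment vectors of point masses $\delta_\lambda$ and extended by a Vandermonde argument. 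Second, even granted $S_0^N>0$, you still have to produce real $b_k$ and \emph{strictly positive} $a_k$ and then check that the resulting system reproduces the prescribed $r_0,\dots,r_{2N-2}$ (via (\ref{Resp_int_spectr})); this is doable --- either by an $N$-point representing measure (Gauss quadrature) and the three-term recurrence, or directly by the determinant formulas (\ref{AK}), (\ref{BK}), whose denominators are principal minors of a permuted $C^N$ and hence nonvanishing precisely because $C^N>0$ --- but your text only names this non-degeneracy issue as ``the main obstacle'' without resolving it. Until one of these routes is actually executed, the hard implication of Theorem \ref{Th_char} is not proved.
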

This theorem and formulas for the entries of Jacobi matrix, which
will be provided later, imply the following procedure of solving
the truncated moment problem:
\begin{itemize}
\item[1)] Calculate $(r_0,r_1,r_2,\ldots,r_{2N-2})$ from
$\{s_0,s_1,\ldots,s_{2N-2}\}$ by using (\ref{Resp_spectr}).

\item[2)] Recover $N\times N$ Jacobi matrix $A^N$ using formulas
for $a_k,$ $b_k$ from \cite{MM3}

\item[3)] Recover spectral measure for finite Jacobi matrix $A^N$
with prescribed arbitrary selfadjoint condition at $n=N+1$.

\item[3')] Extend Jacobi matrix $A^N$ to \emph{finite} Jacobi
matrix $A^M$, $M>N$, prescribe arbitrary selfadjoint condition at
$n=M+1$ and recover spectral measure of $A^M$.

\item[3'')] Extend Jacobi matrix $A^N$ to \emph{infinite} Jacobi
matrix $A$, and recover spectral measure of $A$.
\end{itemize}
Every measure obtained in $3),$ $3')$, $3'')$ gives a solution to
the truncated moment problem. Below we propose a different
approach: using the ideas of the BC method we recover the spectral
measure corresponding to Jacobi matrix $A^N$ directly from moments
(from the operator $C^N$), without recovering the Jacoi matrix
itself.

\begin{agreement}
\label{agr} We assume that controls $f\in \mathcal{F}^N$,
$f=\left(f_0,\ldots,f_{N-1}\right)$ are extended:
$f=\left(f_{-1},f_0,\ldots,f_{N-1},f_N\right)$, where
$f_{-1}=f_N=0$.
\end{agreement}
We introduce the special space of controls
$\mathcal{F}^N_0=\left\{f\in \mathcal{F}^T\,|\, f_0=0\right\}$ and
the operator $D: \mathcal{F}^N\mapsto \mathcal{F}^N$ acting by
\begin{equation*}
\left(Df\right)_t=f_{t+1}+f_{t-1}.
\end{equation*}
The following statement can be easily proved using arguments from
\cite{MM3} and representations (\ref{WT}) and (\ref{Jac_sol_rep}):
\begin{proposition}
\label{PropContr}
\begin{itemize}

\item[1)]The operator $W^N$ maps $\mathcal{F}^N_0$ isomorphically
onto $\mathcal{H}^{N-1}$.

\item[2)]On the set $\mathcal{F}^N_0$ the following relation
holds:
\begin{equation}
\label{FT_proper} W^NDf=DW^Nf,\quad f\in \mathcal{F}^N_0.
\end{equation}
\end{itemize}
\end{proposition}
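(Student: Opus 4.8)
The plan is to prove the two parts separately: part 1) directly from the explicit matrix form (\ref{WT}) (equivalently (\ref{Jac_sol_rep})) of the control operator, and part 2) from a ``time‑doubling'' observation about the solutions of (\ref{Jacobi_dyn_int}).

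For part 1) I would take $f\in\mathcal F^N_0$, i.e. $f_0=0$, and read off the last coordinate of the state from (\ref{WT}) (equivalently from (\ref{Jac_sol_rep}) with $n=t=N$): since every entry of the bottom row of the matrix in (\ref{WT}) except $\prod_{k=0}^{N-1}a_k$ is zero, this gives $(W^Nf)_N=u^f_{N,N}=\bigl(\prod_{k=0}^{N-1}a_k\bigr)f_0=0$. Hence $W^N$ maps $\mathcal F^N_0$ into $\{\,h\in\mathcal H^N:\ h_N=0\,\}\cong\mathcal H^{N-1}$; since $W^N$ is an isomorphism of $\mathbb R^N$ by Lemma~\ref{teor_control}, its restriction to $\mathcal F^N_0$ is injective, and a dimension count ($\dim\mathcal F^N_0=N-1=\dim\mathcal H^{N-1}$) upgrades it to an isomorphism onto $\mathcal H^{N-1}$.

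For part 2) I would fix $f\in\mathcal F^N_0$, keep Agreement~\ref{agr} ($f_{-1}=f_N=0$), extend by zero for $s\le-1$, and set $\widetilde v_{n,t}:=v^f_{n,t+1}+v^f_{n,t-1}$. Adding the equation of (\ref{Jacobi_dyn_int}) at $(n,t+1)$ and at $(n,t-1)$ shows that $\widetilde v$ again solves that difference equation, with boundary data $\widetilde v_{0,t}=f_{t+1}+f_{t-1}=(Df)_t$, $\widetilde v_{N+1,t}=0$, and $\widetilde v_{n,-1}=0$; the remaining initial condition $\widetilde v_{n,0}=v^f_{n,1}=0$ holds \emph{precisely because} $f_0=0$ (evaluating (\ref{Jacobi_dyn_int}) at $t=0$ gives $v^f_{1,1}=a_0f_0$ and $v^f_{n,1}=0$ for $n\ge2$). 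So $\widetilde v=v^{Df}$ up to the time level $t=N$, whence $(W^NDf)_n=v^f_{n,N+1}+v^f_{n,N-1}$ for $n=1,\dots,N$; rewriting this with the equation of (\ref{Jacobi_dyn_int}) at $t=N$ and using $v^f_{0,N}=f_N=0$, $v^f_{N+1,N}=0$, $v^f_{\cdot,N}=W^Nf$,
\begin{equation*}
(W^NDf)_n=a_nv^f_{n+1,N}+a_{n-1}v^f_{n-1,N}+b_nv^f_{n,N}=(A^NW^Nf)_n ,
\end{equation*}
which is the intertwining relation (\ref{FT_proper}) (on $\mathcal H^N$ the operator appearing in (\ref{FT_proper}) is the Jacobi operator $A^N$).

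Everything here reduces to substitution; the one spot deserving care is the initial‑instant bookkeeping in part 2) — the vanishing $\widetilde v_{\cdot,0}=v^f_{\cdot,1}=0$ is exactly what forces the restriction to $\mathcal F^N_0$, and one must track the conventions $f_{-1}=f_N=0$ and $v^f\equiv0$ for $t\le-1$ in order to read off the boundary/initial data of $\widetilde v$ correctly (this is also where the hypothesis $f\in\mathcal F^N_0$ enters both parts). As a cross‑check, or an alternative proof of part 2), one can stay in the spectral picture: insert (\ref{Jac_sol_spectr}) for $(W^Nf)_n=v^f_{n,N}$, verify via (\ref{Chebysh}) that $\sum_{k=1}^{N}\mathcal T_k(\lambda)(Df)_{N-k}=\lambda\sum_{k=1}^{N}\mathcal T_k(\lambda)f_{N-k}$ whenever $f_0=0$ (the $\mathcal T_{N+1}(\lambda)$ term disappears exactly because of this), and recall that $d\rho^N$ is supported on the zeros of $\phi_{N+1}$, where $A^N\varphi(\lambda)=\lambda\varphi(\lambda)$.
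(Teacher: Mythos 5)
Your proof is correct and takes essentially the route the paper itself indicates (the paper offers no written proof, only the remark that the statement follows from the representations (\ref{WT}), (\ref{Jac_sol_rep}) and arguments from \cite{MM3}): part 1) via the triangular structure of $W^N$ together with Lemma \ref{teor_control} and a dimension count, and part 2) via linearity, time-invariance and uniqueness for (\ref{Jacobi_dyn_int}), with $f_0=0$ entering exactly where it must. Your reading of $DW^Nf$ as $v^f_{\cdot,\,N+1}+v^f_{\cdot,\,N-1}=A^N W^Nf$ is precisely how the identity is used in (\ref{L_quadr}) and in the proof of Theorem \ref{teor}, so the interpretation matches the paper's.
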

Taking $f,g\in \mathcal{F}_0^N$ and evaluating the quadratic form,
bearing in mind (\ref{FT_proper}), we obtain:
\begin{eqnarray}
\label{L_quadr}
\left(C^NDf,f\right)_{\mathcal{F}^N}=\left(W^NDf,W^Ng\right)_{\mathcal{H}^{N}}=
\left(DW^Nf,W^Ng\right)_{\mathcal{H}^N}\\
=\left(A^{N-1}v^f,v^g\right)_{\mathcal{H}^N}.\notag
\end{eqnarray}
The last equality in (\ref{L_quadr}) means that only $A^{N-1}$
block of the whole matrix $A^N$ is in use.  Then it is possible to
perform the spectral analysis of $A^{N-1}$ using the classical
variational approach, the controllability of the system
(\ref{Jacobi_dyn_int}) (see Proposition \ref{PropContr}) and the
representation (\ref{L_quadr}), see also \cite{B2001}. The
spectral data of Jacobi matrix $A^{N-1}$ with the Dirichlet
boundary condition at $n=N$ can be recovered by the following
procedure:

\begin{itemize}
\item[1)] The first eigenvalue is given by
\begin{equation}
\label{M1}\lambda_1^{N-1}=\min_{f\in \mathcal{F}^N_0,\,(C^N
f,f)_{\mathcal{F}^N}=1}\left(C^N Df,f\right)_{\mathcal{F}^N}.
\end{equation}

\item[2)] Let $f^1$, be the minimizer of (\ref{M1}), then
\begin{equation*}
\rho_1=\left(C^N f^1,f^1\right)_{\mathcal{F}^N}.
\end{equation*}

\item[3)] The second eigenvalue is given by
\begin{equation}
\label{M2}\lambda_2^{N-1}=\min_{\substack {f\in
\mathcal{F}^N_0,(C^N f,f)_{\mathcal{F}^N}=1\\
(C^N f,f_l)_{\mathcal{F}^N}=0}}\left(C^N
Df,f\right)_{\mathcal{F}^N}.
\end{equation}

\item[4)] Let $f^2$, be the minimizer of (\ref{M2}), then
\begin{equation*}
\rho_2=\left(C^N f^2,f^2\right)_{\mathcal{F}^N}.
\end{equation*}
\end{itemize}
Continuing this procedure, one recovers the set
$\{\lambda_k^{N-1},\rho_k\}_{k=1}^{N-1}$ and constructs the
measure $d\rho^{N-1}(\lambda)$ by (\ref{measure}).

\begin{remark}
The measure, constructed by the above procedure solves the
truncated moment problem for the set of moments
$\{s_0,s_1,\ldots,s_{2N-4}\}$.
\end{remark}

\subsection{Euler-Lagrange equations}

In this section we derive equations which can be thought of as a
Euler-Lagrange equations for the problem of the minimization of a
functional $\left(C^NDf,f\right)_{\mathcal{F}^N}$ in
$\mathcal{F}^N_0$ with the constrain
$\left(C^Tf,f\right)_{\mathcal{F}^N}=1$, described in the previous
section. Similar method of deriving equations which can be used
for recovering of spectral data was used in \cite{AMM}.

By $f^k$, $k=1,\ldots,N$ we denote the control that drive system
(\ref{Jacobi_dyn_int}) to prescribed state (see (\ref{Phi_def})):
\begin{equation*}
W^Tf_k=\phi^k,\quad k=1,\ldots,N.
\end{equation*}
Due to Proposition \ref{PropContr}, such a control exists and is
unique for every $k$. We introduce the operator
\begin{gather*}
D_-^N: \mathcal{F}^N\mapsto \mathcal{F}^{N},\\
\left(D^N_-f\right)_n=f_{n-1},\quad n=1,\ldots,N-1,\quad
\left(D^N_-f\right)_0=0,
\end{gather*}
and denote by $P: \mathcal{F}^{N+1}\mapsto \mathcal{F}^{N}$ the
embedding operator. Then $P^*:\mathcal{F}^{N}\mapsto
\mathcal{F}^{N+1}$ extends vector by zero:
$\left(P^*f\right)_{k}=f_k$, $k=0,1,N-1$, and
$\left(P^*f\right)_{N}=0.$

\begin{theorem}
\label{teor} The spectrum of $A^{N}$ and (non-normalized) controls
$f_k$, $k=1,\ldots,N$ are the spectrum and the eigenvectors of the
following generalized spectral problem:
\begin{equation}
\label{m_eqn}
\left(P\left(D^{N+1}_-\right)^*C^{N+1}P^*+C^ND^N_-\right)f_k=\lambda_kC^Nf_k,\quad
k=1,\ldots,N.
\end{equation}
\end{theorem}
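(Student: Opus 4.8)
The plan is to identify the operator on the left of (\ref{m_eqn}), which I abbreviate
$L:=P\left(D^{N+1}_-\right)^*C^{N+1}P^*+C^ND^N_-$, with $\left(W^N\right)^*A^NW^N$; once this identity is established the theorem follows at once, because $C^N=\left(W^N\right)^*W^N$ and $W^N$ is an isomorphism. Concretely, I would evaluate the bilinear form $\left(Lf,g\right)_{\mathcal{F}^N}$ for arbitrary $f,g\in\mathcal{F}^N$ and show it equals $\left(A^NW^Nf,W^Ng\right)_{\mathcal{H}^N}$.

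For the first summand, moving the adjoints onto the second argument gives $\left(C^{N+1}P^*f,\,D^{N+1}_-P^*g\right)_{\mathcal{F}^{N+1}}=\left(W^{N+1}P^*f,\,W^{N+1}D^{N+1}_-P^*g\right)_{\mathcal{H}^{N+1}}$. The point is that $D^{N+1}_-P^*g$ is the control $g$, delayed by one unit of time and embedded into $\mathcal{F}^{N+1}$; by time-translation invariance of (\ref{Jacobi_dyn}) its evolution at time $N+1$ coincides with the evolution of $g$ at time $N$, while finite speed of propagation gives $u^g_{N+1,\,N}=0$ (by (\ref{Jac_sol_rep}), $u^g_{n,t}=0$ for $t<n$), whence $W^{N+1}D^{N+1}_-P^*g=\left(W^Ng,0\right)\in\mathcal{H}^{N+1}$. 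Since $W^{N+1}P^*f$, restricted to the first $N$ coordinates, is $u^f_{\cdot,\,N+1}$ (with $f_N=0$ as in Agreement \ref{agr}), the first summand equals $\sum_{n=1}^{N}u^f_{n,\,N+1}u^g_{n,\,N}$. For the second summand, $D^N_-f$ agrees on times $0,\ldots,N-1$ with the one-step delay of $f$, so $u^{D^N_-f}_{n,\,N}=u^f_{n,\,N-1}$ and $\left(C^ND^N_-f,g\right)_{\mathcal{F}^N}=\sum_{n=1}^{N}u^f_{n,\,N-1}u^g_{n,\,N}$.

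Adding the two contributions and using the difference equation in (\ref{Jacobi_dyn}) at $t=N$, that is $u^f_{n,\,N+1}+u^f_{n,\,N-1}=a_nu^f_{n+1,\,N}+a_{n-1}u^f_{n-1,\,N}+b_nu^f_{n,\,N}$ for $n\geqslant1$, together with $u^f_{0,\,N}=f_N=0$ and $u^f_{N+1,\,N}=0$ (finite speed once more), one gets $u^f_{n,\,N+1}+u^f_{n,\,N-1}=\left(A^NW^Nf\right)_n$ for $n=1,\ldots,N$. Hence $\left(Lf,g\right)_{\mathcal{F}^N}=\left(A^NW^Nf,W^Ng\right)_{\mathcal{H}^N}$ for all $f,g$, i.e. $L=\left(W^N\right)^*A^NW^N$.

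It remains to read off the statement. Since $W^N$ is an isomorphism of $\mathcal{F}^N$ onto $\mathcal{H}^N$ (Lemma \ref{teor_control} together with (\ref{Rav_bc})) and $C^N=\left(W^N\right)^*W^N$, the generalized spectral problem $Lf=\lambda C^Nf$ is equivalent, upon applying $\left(\left(W^N\right)^*\right)^{-1}$, to $A^N\left(W^Nf\right)=\lambda\left(W^Nf\right)$. Thus its eigenvalues are precisely $\{\lambda_k\}_{k=1}^N$, the (simple) eigenvalues of $A^N$ — the roots of $\phi_{N+1}(\lambda)=0$ — and the eigenvector attached to $\lambda_k$ is the unique $f$ with $W^Nf\in\operatorname{span}\phi^k$, which by the defining relation $W^Nf_k=\phi^k$ is exactly $f_k$. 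The only genuinely delicate steps are the two finite-speed identities ($u^g_{N+1,\,N}=0$, and the replacement of the semi-infinite $A$ by its block $A^N$ in the last coordinate) and keeping track of the index shifts induced by $P$, $P^*$, $D^N_-$ and $D^{N+1}_-$; the rest is routine bookkeeping.
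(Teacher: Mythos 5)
Your proof is correct and follows essentially the same route as the paper: both convert the bilinear form of $P\left(D^{N+1}_-\right)^*C^{N+1}P^*+C^ND^N_-$ into $\left(A^NW^Nf,W^Ng\right)_{\mathcal{H}^N}$ via the same delay/truncation identities ($W^{N+1}D^{N+1}_-P^*g=(W^Ng,0)$, the shift $D^N_-$ producing the wave at time $N-1$, finite speed of propagation at $n=N+1$) and the equation of motion at $t=N$. The only organizational difference is that you package this as the operator identity $L=\left(W^N\right)^*A^NW^N$ valid for all controls, which gives both directions at once, whereas the paper first verifies the relation on the special controls $f_k$ and then treats the converse separately by expanding $W^Nf$ in the eigenbasis $\{\phi^k\}$ and invoking controllability.
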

\begin{proof}
For $h\in \mathcal{F}^T$ we always assume that $h_{-1}=h_T=0$ (see
Agreement \ref{agr}). For a fixed $k=1,\ldots,N$ we take $f_k\in
\mathcal{F}^N$ such that $W^Nf_k=v^{f_k}_{\cdot,\,N}=\phi^k$, then
for arbitrary $g\in \mathcal{F}^N$ we can evaluate:
\begin{eqnarray}
\left(\lambda_kC^N
f_k,g\right)_{\mathcal{F}^N}=\left(\lambda_kv^{f_k}_{\cdot,\,N},v^g_{\cdot,\,N}\right)_{\mathcal{H}^N}=
\left(\lambda_k\phi^{k},v^g_{\cdot,\,N}\right)_{\mathcal{H}^N}=
\left(A^{N}\phi^{k},v^g_{\cdot,\,N}\right)_{\mathcal{H}^N}\notag\\
=\left(\left(A^{N}v^{f_k}\right)_{\cdot,\,N},v^g_{\cdot,\,N}\right)_{\mathcal{H}^N}
=\left(\left(Dv^{f_k}\right)_{\cdot,\,N},v^g_{\cdot,\,N}\right)_{\mathcal{H}^N}\notag\\
=\left(v^{f_k}_{\cdot,\,N+1},v^g_{\cdot,\,N})\right)_{\mathcal{H}^N}+\left(v^{f_k}_{\cdot,\,N-1},v^g_{\cdot,\,N}\right)_{\mathcal{H}^N}.\label{C_ev1}
\end{eqnarray}
We note that
\begin{eqnarray*}
\mathcal{H}^N \ni
v^g_{\cdot,\,N}=\left(v^g_{1,\,N},\ldots,v^g_{N,\,N}\right),\\
\mathcal{H}^{N+1} \ni v^{D^{N+1}_-
g}_{\cdot,\,N+1}=\left(v^g_{1,\,N},\ldots,v^g_{N,\,N},0\right).
\end{eqnarray*}
That is why we can rewrite the first summand in the right hand
side of (\ref{C_ev1}) as
\begin{equation}
\label{C_ev2}
\left(v^{f_k}_{\cdot,\,N+1},v^g_{\cdot,N}\right)_{\mathcal{H}^N}=\left(v^{f_k}_{\cdot,\,N+1},v^{D^{N+1}_-
g}_{\cdot,\,N+1}\right)_{\mathcal{H}^{N+1}}=\left(C^{N+1}f_k,D^{N+1}_-g\right)_{\mathcal{F}^{N+1}}.
\end{equation}
Analogously:
\begin{eqnarray*}
\mathcal{H}^{N} \ni
v^{f_k}_{\cdot,\,N-1}=\left(v^{f_k}_{1,\,N-1},\ldots,v^{f_k}_{N-1,N-1},0\right),\\
\mathcal{H}^{N} \ni
v^{D^N_-f_k}_{\cdot,\,N}=\left(v^{f_k}_{1,\,N-1},\ldots,v^{f_k}_{N-1,N-1},0\right).
\end{eqnarray*}
So we can rewrite the second summand in the right hand side of
(\ref{C_ev1}) as
\begin{equation}
\label{C_ev3}
\left(v^{f_k}_{\cdot,\,N-1},v^g_{\cdot,\,N}\right)_{\mathcal{H}^N}=
\left(v^{D^N_-f_k}_{\cdot,\,N},v^{g}_{\cdot,\,N}\right)_{\mathcal{H}^{N}}=\left(C^{N}D^N_-f_k,g\right)_{\mathcal{F}^{N}}.
\end{equation}
Finally from (\ref{C_ev1}), (\ref{C_ev2}) and (\ref{C_ev3}) we
deduce that
\begin{equation}
\label{Eq1} \left(\lambda_kC^N
f_k,g\right)_{\mathcal{F}^N}=\left(C^{N+1}f_k,D^{N+1}_-g\right)_{\mathcal{F}^{N+1}}+\left(C^{N}D^N_-f_k,g\right)_{\mathcal{F}^{N}}.
\end{equation}
Using operators $P,$ $P^*$  we can rewrite (\ref{Eq1}) in the form
\begin{equation*}
\left(P\left(D^{N+1}_-\right)^*C^{N+1}P^*+C^ND^N_-\right)f_k=\lambda_kC^Nf_k.
\end{equation*}

Thus the pair $\{f_k,\lambda_k\}$ gives the solution to
(\ref{m_eqn}). Now let the pair $\{f,\lambda\}$ be the solution to
(\ref{m_eqn}) with $f\in \mathcal{F}^N$ $f\not= f_k,$
$\lambda\not=\lambda_k$ for any $k=1\,\ldots,N$. Then
$W^Nf=v^f_{\cdot,\,N}=\sum_{k=1}^{N} a_k \phi^k$ for some $a_k\in
\mathbb{R}$. We can evaluate for arbitrary $g\in \mathcal{F}^N$:
\begin{eqnarray*}
0=\left(\left(P\left(D^{N+1}_-\right)^*C^{N+1}P^*+C^ND^N_-\right)f-\lambda
C^Nf,g\right)_{\mathcal{F}^N}\\
=\left(C^{N+1}P^*f,D^{N+1}_-P^*g\right)_{\mathcal{F}^{N+1}}
+\left(C^N D^N_-f,g\right)_{\mathcal{F}^N}-\lambda\left(
v^f_{\cdot,\,N},v^g_{\cdot,\,N}\right)_{\mathcal{H}^N} \\
=\left(
v^{P^*f}_{\cdot,\,N+1},v^{D^{N+1}_-P^*g}_{\cdot,\,N+1}\right)_{\mathcal{H}^{N+1}}+
\left(v^{D^N_-f}_{\cdot,\,N},v^g_{\cdot,\,N}\right)_{\mathcal{H}^N}
-\lambda\left(v^f_{\cdot,\,N},v^g_{\cdot,\,N}\right)_{\mathcal{H}^N}\\
= \left(
v^{P^*f}_{\cdot,\,N+1},v^{g}_{\cdot,\,N+1}\right)_{\mathcal{H}^{N}}+\left(v^{f}_{\cdot,\,N-1},v^g_{\cdot,\,N-1}\right)_{\mathcal{H}^N}
-\lambda\left(v^f_{\cdot,\,N-1},v^g_{\cdot,\,N-1}\right)_{\mathcal{H}^N}\\
=\left(\left(A^Nv^{f}\right)_{\cdot,\,N},v^{g}_{\cdot,N}\right)_{\mathcal{H}^{N}}-\lambda\left(
v^f_{\cdot,\,N},v^g_{\cdot,\,N}\right)_{\mathcal{H}^N}\\
=\left( A^N\sum_{k=1}^{N} a_k \phi^k-\lambda \sum_{k=1}^{N} a_k
\phi^k,W^Ng\right)_{\mathcal{H}^N}= \left( \sum_{k=1}^{N} a_k
(\lambda_k-\lambda)\phi^k,W^Ng\right)_{\mathcal{H}^N}.
\end{eqnarray*}
From the above equality and Proposition \ref{PropContr} it follows
that all $a_k$ except one are equal to zero, and for such $a_j,$
$\lambda=\lambda_j$, which completes the proof.
\end{proof}

Having found spectrum and non-nomalized controls from
(\ref{m_eqn}) we can recover the measure of operator $A^N$ with
Dirichlet boundary condition at $n=N+1$ by the following
procedure:
\begin{itemize}

\item[1)] Normalize controls by choosing
$\left(C^Nf_k,f_k\right)_{\mathcal{F}^N}=1$,

\item[2)] Observe that $W^Nf^k=\alpha_k\phi^k$ for some
$\alpha_k\in \mathbb{R}$, where the constant is defined by
$\alpha_k=\left(Rf_k\right)_N$.

\item[3)] The norming coefficients are given by
$\rho_k={\alpha_k^2}$, $k=1,\ldots,N$.

\item[4)] Recover the measure by (\ref{measure}).

\end{itemize}

Now we rewrite the generalized spectral problem (\ref{m_eqn}) in
more details and transfer the matrices in left and right-hand
sides to Hankel matrices known from classical literature
\cite{Ahiez,S}. 
Note that the matrices in (\ref{m_eqn}) has the following
representations:
\begin{align}
P=\begin{pmatrix} 1 & 0 & \ldots & 0\\
0 & 1 & \ldots & 0\\
\cdot & \cdot & \ldots & \cdot \\
0& \ldots & 1 & 0
\end{pmatrix},\quad
P^*=\begin{pmatrix} 1 & 0 & \ldots & 0\\
0 & 1 & \ldots & 0\\
\cdot & \cdot & \ldots & \cdot \\
0& \ldots & 0& 1\\
0& \ldots & 0& 0
\end{pmatrix},\notag\\
D^N_-=\begin{pmatrix} 0 & 0 & \ldots & 0\\
1 & 0 & \ldots & 0\\
\cdot & \cdot & \ldots & \cdot \\
0& \ldots & 1 & 0
\end{pmatrix},\quad
\left(D^N_-\right)^*= \begin{pmatrix} 0 & 1 & \ldots & 0\\
0 & 0 & \ldots & 0\\
\cdot & \cdot & \ldots & \cdot \\
0& \ldots & 0 & 1\\
 0& \ldots & 0 & 0
\end{pmatrix},\notag \\
C^N=\begin{pmatrix} c_{N,N} & \ldots & \cdot & c_{N,1}\\
c_{N-1,N} &  & \cdot & c_{N-1,1}\\
\cdot & \cdot & \cdot & \cdot \\
c_{1,N}& \ldots & \cdot & c_{1,1}
\end{pmatrix},\, C^{N+1}=\begin{pmatrix} c_{N+1,N+1} & \ldots & \cdot & c_{N+1,1}\\
c_{N,N+1} &  & \cdot & c_{N,1}\\
\cdot & \cdot & \cdot & \cdot \\
c_{1,N+1}& \ldots & \cdot & c_{1,1}
\end{pmatrix}. \label{CT_newdef}
\end{align}
Here we used the notations for entries of $C^N$ different from
ones in (\ref{C_T_repr}) in order to show that $C^N$ is a lower
right block in $C^{N+1}$. The left hand side of (\ref{m_eqn}) we
denote by
\begin{equation*}
B^N:=P\left(D^{N+1}_-\right)^*C^{N+1}P^*+C^ND^N_-.
\end{equation*}

\begin{proposition}
The matrix $B^N$ is self-adjoint, it admits the following
representation:
\begin{equation}
\label{BN_def}
B^N=\begin{pmatrix} c_{N,N+1}+c_{N,N-1} & c_{N,N}+c_{N,N-2} & \ldots & c_{N,3}+c_{N,1} & c_{N,2}\\
c_{N-1,N+1}+c_{N-1,N-1} & \ldots & \ldots & c_{N-1,3}+c_{N-1,1} & c_{N-1,2}\\
\cdot & \cdot & \ldots & \cdot & \cdot \\
c_{1,N+1}+c_{1,N-1}& c_{1,N}+c_{1,N-2} & \ldots & \ldots & c_{1,2}
\end{pmatrix}.
\end{equation}
\end{proposition}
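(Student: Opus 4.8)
The plan is to verify the representation (\ref{BN_def}) by straightforwardly multiplying out the matrix product $P(D^{N+1}_-)^*C^{N+1}P^*$ and adding $C^ND^N_-$, using the explicit matrix forms collected in (\ref{CT_newdef}), and then to read off self-adjointness from the resulting formula together with the symmetry $c_{ij}=c_{ji}$ of the connecting operators.

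First I would compute $C^{N+1}P^*$: since $P^*:\mathcal{F}^N\mapsto\mathcal{F}^{N+1}$ extends a vector by a zero in the last slot, $C^{N+1}P^*$ is the $(N+1)\times N$ matrix obtained from $C^{N+1}$ by deleting its last column. Next, $(D^{N+1}_-)^*$ is the backward shift on $\mathcal{F}^{N+1}$ (it sends $e_n\mapsto e_{n-1}$, kills $e_0$), so left-multiplying by $(D^{N+1}_-)^*$ deletes the first row and appends a zero row at the bottom; equivalently it shifts every row up by one. Finally $P$ deletes that appended zero (last) row. The net effect is that $P(D^{N+1}_-)^*C^{N+1}P^*$ is the $N\times N$ matrix whose $(i,j)$ entry, in the indexing of (\ref{CT_newdef}) where the first row corresponds to index $N$ and the last to index $1$, equals the entry of $C^{N+1}$ one row "earlier", i.e.\ the term $c_{N+1-i,\,N+2-j}$ for the columns $j=1,\ldots,N-1$ and the appropriate boundary term for $j=N$ (reading off from the displayed $C^{N+1}$, this gives the $c_{\cdot,N+1}$ and $c_{\cdot,2}$ type entries appearing in (\ref{BN_def})). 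Separately, $D^N_-$ is the forward shift, so $C^ND^N_-$ deletes the last column of $C^N$ and prepends a zero first column; its $(i,j)$ entry is $c_{N+1-i,\,N+2-j}$ restricted suitably, contributing the $c_{\cdot,N-1}$, $c_{\cdot,N-2},\ldots$ terms. Adding the two contributions column by column produces exactly the pairwise sums $c_{\cdot,k+1}+c_{\cdot,k-1}$ displayed in (\ref{BN_def}), with the first column getting $c_{\cdot,N+1}+c_{\cdot,N-1}$ (the $C^ND^N_-$ part contributes its shifted $c_{\cdot,N-1}$) and the last column getting only $c_{\cdot,2}$ (the $C^ND^N_-$ part contributes a zero column there since its last column was deleted, and the $P(D^{N+1}_-)^*C^{N+1}P^*$ part contributes $c_{\cdot,2}$).

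Self-adjointness of $B^N$ then follows by inspection: one checks that the $(i,j)$ and $(j,i)$ entries of (\ref{BN_def}) coincide once $c_{ij}=c_{ji}$ is used, equivalently that $B^N=(B^N)^*$ because $(P(D^{N+1}_-)^*C^{N+1}P^*)^*=P(D^{N+1}_-)C^{N+1}P^*$ and one verifies this equals $C^ND^N_-$ plus the term $P(D^{N+1}_-)^*C^{N+1}P^*$ reconstituted — more cleanly, since $B^N$ appears as the matrix of the quadratic form on the left of (\ref{Eq1}), symmetry of that bilinear form in $f_k$ and $g$ (which is visible from the symmetric role of the two arguments in the connecting-operator inner products) already forces $B^N=(B^N)^*$.

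The only mildly delicate point is bookkeeping the two different index conventions — the one in (\ref{C_T_repr}) versus the "lower-right block" convention of (\ref{CT_newdef}) — and keeping track of which row/column gets truncated or padded with a zero by each of $P$, $P^*$, $D^N_-$, $(D^{N+1}_-)^*$; I would handle this by fixing the convention of (\ref{CT_newdef}) throughout and writing the shift operators as explicit $e_n\mapsto e_{n\pm1}$ maps. Beyond that the proof is a direct matrix multiplication with no real obstacle.
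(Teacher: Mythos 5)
Your computation of the representation follows essentially the same route as the paper (the one-diagonal matrices $P,P^*,D^N_-,(D^{N+1}_-)^*$ delete or append rows/columns; the first term is $C^{N+1}$ with its first row and last column removed, the second is $C^N$ with a column shift), and your final column-by-column assembly is correct. However, your verbal description of the second term is internally inconsistent: right-multiplication by $D^N_-$ (ones on the subdiagonal, see (\ref{CT_newdef})) shifts the columns of $C^N$ to the \emph{left}, i.e. it deletes the first matrix column (the $c_{\cdot,N}$ column) and appends a zero column on the right, so the $(i,j)$ entry of $C^ND^N_-$ is $c_{N+1-i,\,N-j}$ (zero for $j=N$). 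What you wrote --- ``deletes the last column and prepends a zero first column'', with entry $c_{N+1-i,\,N+2-j}$ --- describes $C^N(D^N_-)^*$ instead. Your concluding sentence (first column of the second term contributes $c_{\cdot,N-1}$, last column is zero) uses the correct behavior, so the slip is fixable, but as written the two halves of the argument contradict each other.

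The genuine gap is in the self-adjointness claim. Entrywise, $B^N_{ij}=c_{N+1-i,\,N+2-j}+c_{N+1-i,\,N-j}$, and after using $c_{ab}=c_{ba}$ the identity $B^N_{ij}=B^N_{ji}$ is equivalent to $c_{a,b+1}+c_{a,b-1}=c_{a+1,b}+c_{a-1,b}$ with $a=N+1-i$, $b=N+1-j$; this is precisely the difference equation satisfied by the kernel of the connecting operator (Corollary \ref{Corl}), and it does \emph{not} follow from the symmetry $c_{ij}=c_{ji}$ alone, which is all your first argument uses. This missing ingredient is exactly what the paper's proof invokes. Your alternative argument via (\ref{Eq1}) is also not carried out: the two arguments do not enter the right-hand side of (\ref{Eq1}) symmetrically (one is acted on by $D^{N+1}_-$ in the first summand, the other by $D^N_-$ in the second), so no symmetry is ``visible'' there. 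It can be repaired by tracing the identity back through the wave fields, $\left(B^Nf,g\right)_{\mathcal{F}^N}=\left(\left(A^Nv^f\right)_{\cdot,N},v^g_{\cdot,N}\right)_{\mathcal{H}^N}=\left(A^NW^Nf,W^Ng\right)_{\mathcal{H}^N}$, i.e. $B^N=\left(W^N\right)^*A^NW^N$ with $A^N$ self-adjoint; either supply that step explicitly or appeal to Corollary \ref{Corl} as the paper does.
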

\begin{proof}
We note that the matrices $P$, $P^{*}$  and $D_-$, $D^{*}_-$
have one diagonal filled with ones  and the other elements are
zeros.  Thus the multiplication by such a matrix leads to deleting
a line or column from the original matrix (possibly with the
addition of a zero line or column). Performing calculations we see
that the first term in the right hand side of $B^N$  is obtained
by deleting last column and first row from $C^{N+1}$ and the
second term is obtained by deleting the first column and adding
zero column to $C^N$. All aforesaid leads to the formula
(\ref{BN_def}).

We note that the above representation (\ref{BN_def}) and Corollary
\ref{Corl}  shows that $B^T$ is self-adjoint matrix.
\end{proof}

\begin{remark}
The spectral problem (\ref{m_eqn}) has a form
\begin{equation}
\label{m_eqn1} B^Nf_k=\lambda_k C^Nf_k,\quad k=1,\ldots,N,\quad
\quad\text{where}\quad C^N>0,\quad B^N=\left(B^N\right)^*.
\end{equation}
\end{remark}

Chebyshev polynomials of the second kind $\{
\mathcal{T}_1(\lambda),\mathcal{T}_2(\lambda),\ldots
\mathcal{T}_n(\lambda)\}$ (see \ref{Chebysh}) are related to $\{
1,\lambda,\lambda^{n-1}\}$ by the following relation
\begin{equation}
\label{Response_moments_rel}
\begin{pmatrix}
\mathcal{T}_1(\lambda)\\
\mathcal{T}_2(\lambda)\\
\ldots \\
\mathcal{T}_n(\lambda)
\end{pmatrix}=\Lambda_n \begin{pmatrix}
1\\
\lambda\\
\ldots \\
\lambda^{n-1}
\end{pmatrix}=\begin{pmatrix}
1& 0& \ldots & 0\\
a_{21}& 1 & \ldots & 0\\
\ldots \\
a_{n1} & a_{n2}& \ldots & 1
\end{pmatrix}\begin{pmatrix}
1\\
\lambda\\
\ldots \\
\lambda^{n-1}
\end{pmatrix}.
\end{equation}
\begin{proposition}
The entries of the matrix $\Lambda_n\in R^{n-1\times n-1}$ are
given by
\begin{equation}
\label{LambdaMatr}
\Lambda_n=a_{ij}=\begin{cases} 0,\quad \text{if $i>j$},\\
0,\quad \text{if $i+j$ is odd,}\\
C_{\frac{i+j}{2}}^j(-1)^{\frac{i+j}{2}+j}.
\end{cases}
\end{equation}
The entries of the response vector are related to moments by the
rule:
\begin{equation}
\label{Resp_and_moments}
\begin{pmatrix}
r_0\\
r_1\\
\ldots \\
r_{n-1}
\end{pmatrix}=\Lambda_n\begin{pmatrix}
s_0\\
s_1\\
\ldots \\
s_{n-1}
\end{pmatrix}.
\end{equation}
\end{proposition}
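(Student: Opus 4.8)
The plan is to prove the closed form \eqref{LambdaMatr} first and then read off \eqref{Resp_and_moments} as an immediate corollary. For \eqref{LambdaMatr} I would run an induction driven by the three-term recurrence \eqref{Chebysh}. First I would note that \eqref{Chebysh} forces $\mathcal{T}_i(\lambda)$ to be a polynomial of degree exactly $i-1$ with leading coefficient $1$ containing only powers of $\lambda$ of the same parity as $i-1$; this already accounts for the unit diagonal and the two ``$0$'' cases in \eqref{LambdaMatr} (entries with $i>j$, and entries with $i+j$ odd) and for the lower triangular unipotent shape of $\Lambda_n$ displayed in \eqref{Response_moments_rel}. Writing $\mathcal{T}_i(\lambda)=\sum_j a_{ij}\lambda^{\,j-1}$ and substituting into $\mathcal{T}_i+\mathcal{T}_{i-2}-\lambda\mathcal{T}_{i-1}=0$, comparison of coefficients gives the two-step recursion $a_{ij}=a_{i-1,j-1}-a_{i-2,j}$ (with the convention $a_{i,0}:=0$), together with the trivial initial data coming from $\mathcal{T}_1=1$ and $\mathcal{T}_2=\lambda$. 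It then remains to check that the binomial-coefficient formula \eqref{LambdaMatr} satisfies this recursion. The parity bookkeeping --- verifying that $a_{ij}$, $a_{i-1,j-1}$ and $a_{i-2,j}$ lie simultaneously in their nonzero ranges and that the half-integer parameters $\tfrac{i+j}{2}$ and $\tfrac{i-j}{2}$ match up after the index shifts --- is routine; once it is done, the identity that closes the induction is simply Pascal's rule $\binom{m}{p}+\binom{m}{p-1}=\binom{m+1}{p}$, while the sign $(-1)^{\frac{i+j}{2}+j}$ transforms correctly because its exponent is unchanged when $i$ decreases by $2$ and changes by one when $j$ decreases by one. I expect this verification to be the only genuine computation in the statement, and it is bookkeeping rather than anything conceptual. (As an alternative one could invoke the classical monomial expansion of the Chebyshev polynomials of the second kind, or extract the coefficients from the generating function $\sum_{k\ge 1}\mathcal{T}_k(\lambda)t^{k-1}=(1-\lambda t+t^2)^{-1}$, but the inductive route is self-contained and I would keep it.)

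For \eqref{Resp_and_moments} no further work is needed. By the spectral representation \eqref{Resp_spectr} of the response vector, $r_{t-1}=\int_{-\infty}^{\infty}\mathcal{T}_t(\lambda)\,d\rho(\lambda)$ for every $t$. Expanding $\mathcal{T}_t$ in the monomial basis via \eqref{Response_moments_rel}, i.e. $\mathcal{T}_t(\lambda)=\sum_{j}(\Lambda_n)_{tj}\lambda^{\,j-1}$ with the matrix $\Lambda_n$ just identified, and integrating term by term against $d\rho$, each integral $\int_{-\infty}^{\infty}\lambda^{\,j-1}\,d\rho(\lambda)$ is by \eqref{Moment_eq} exactly the moment $s_{j-1}$. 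Hence $r_{t-1}=\sum_{j}(\Lambda_n)_{tj}\,s_{j-1}$, which is precisely the matrix identity \eqref{Resp_and_moments} with the very same $\Lambda_n$. This makes explicit the point announced in the introduction that the response vector is the image of the moment sequence under the fixed triangular change of basis between monomials and Chebyshev polynomials, so that the matrices $B^N$ and $C^N$ of the generalized spectral problem \eqref{m_eqn1} --- built from the $r_k$ through \eqref{C_T_repr} and \eqref{BN_def} --- can be re-expressed in terms of the classical Hankel matrices of the moments. The only obstacle anywhere in the argument is the index/sign/parity matching inside the inductive step for \eqref{LambdaMatr}; everything else is formal.
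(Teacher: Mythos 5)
Your route is the same one the paper takes (the paper's proof is a one‑liner: the entries of $\Lambda_n$ follow by direct calculation with properties of the Chebyshev polynomials, and \eqref{Resp_and_moments} then follows from \eqref{Resp_spectr}); your treatment of the second half — expand $\mathcal{T}_t$ in monomials via \eqref{Response_moments_rel}, integrate against $d\rho$, identify the moments from \eqref{Moment_eq} — is complete and correct, and it only uses that $\Lambda_n$ is the triangular change‑of‑basis matrix, not its explicit entries.

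The gap is in the first half: you defer the only substantive computation, namely checking that the closed form \eqref{LambdaMatr} satisfies $a_{ij}=a_{i-1,j-1}-a_{i-2,j}$ with the boundary convention $a_{i,0}=0$, calling it routine bookkeeping — but that check does not close for the formula as you have stated it. The expression $(-1)^{\frac{i+j}{2}+j}C^{j}_{\frac{i+j}{2}}$ does pass the interior (Pascal) step, exactly as you say, but it violates the boundary at $j=1$: your recursion with $a_{i,0}=0$ forces $a_{i,1}=-a_{i-2,1}$, hence $a_{31}=-1$, $a_{51}=+1$, whereas the displayed formula gives $a_{31}=-2$, $a_{51}=3$ (and $a_{42}=-3$), while in fact $\mathcal{T}_3=\lambda^2-1$, $\mathcal{T}_4=\lambda^3-2\lambda$, $\mathcal{T}_5=\lambda^4-3\lambda^2+1$. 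Carrying out your own induction (or invoking the classical expansion $U_m(x)=\sum_{k}(-1)^k\binom{m-k}{k}(2x)^{m-2k}$, which you mention as an alternative) yields
\begin{equation*}
a_{ij}=(-1)^{\frac{i-j}{2}}\binom{\tfrac{i+j}{2}-1}{\,j-1\,},\qquad i+j \text{ even},\ j\leqslant i,
\end{equation*}
i.e.\ the binomial coefficient must be shifted by one in both indices (the sign factor is equivalent to the stated one, since $(-1)^{\frac{i+j}{2}+j}=(-1)^{\frac{i-j}{2}}$ for $i+j$ even); likewise the vanishing condition should be $j>i$, matching the lower‑triangular shape in \eqref{Response_moments_rel}, not $i>j$. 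So the verification you wave through is precisely the place where the computation fails as written: asserting it is ``bookkeeping'' without performing it leaves \eqref{LambdaMatr} unproved, and performing it shows the entries need the corrected form above before the proposition (and hence the passage from $B^N,C^N$ to the Hankel matrices $S^N_0,S^N_1$) can be used as stated.
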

\begin{proof}
The formula (\ref{LambdaMatr}) for entries of $\Lambda_n$ is
proved by direct calculations with the use of properties of
Chebyshev polynomials. Then making use of (\ref{Resp_spectr})
yields (\ref{Resp_and_moments}).
\end{proof}

Introduce the following Hankel matrices
\begin{equation*}
S^N_{m}:=\begin{pmatrix} s_{2N-2+m} & s_{2N-3+m} & \ldots & s_{N-1+m}\\
s_{2N-3+m} & \ldots & \ldots & \ldots\\
\cdot & \cdot & \ldots & s_{1+m} \\
s_{N-1+m} & \ldots & s_{1+m} & s_{m}
\end{pmatrix},\quad m=0,1,\dots,
\end{equation*}
the matrix $J_N\in \mathbb{R}^{N\times N}$:
\begin{equation*}
J_N=\begin{pmatrix} 0 & \ldots & 0 & 1\\
0 & \ldots & 1 & 0\\
\cdot & \cdot & \ldots & \cdot \\
0& 1 & \ldots & 0\\
 1& \ldots & 0 & 0
\end{pmatrix},\quad J_NJ_N=I_N=\begin{pmatrix} 1 & 0 & \ldots & 0\\
0 & 1 & \ldots & 0\\
\cdot & \cdot & \ldots & \cdot \\
0& \ldots & 1 & 0\\
0& \ldots & 0 & 1
\end{pmatrix},
\end{equation*}
and define
\begin{equation*}
\widetilde\Lambda_N:=J_N\Lambda_N J_N.
\end{equation*}
The remarkable fact is that the matrices $B^N,\,C^N$ can be
reduced to Hankel matrices by the same linear transformation:
\begin{theorem}
\label{Propos_EL} The following relations hold:
\begin{eqnarray}
\label{CT_S0} C^N=\widetilde\Lambda_N S_0^N
\left(\widetilde\Lambda_N\right)^*,\\
\label{BT_S1} B^N =\widetilde\Lambda_N S_1^N
\left(\widetilde\Lambda_N\right)^*.
\end{eqnarray} Then the
generalized spectral problem (\ref{m_eqn}) or (\ref{m_eqn1}) upon
introducing the notation
$g_k=\left(\widetilde\Lambda_N\right)^*f_k$ is equivalent to the
following generalized spectral problem:
\begin{equation}
\label{m_eqn2} S^N_1g_k=\lambda_k S^N_0g_k.
\end{equation}
\end{theorem}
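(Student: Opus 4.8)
The plan is to realize both $C^N$ and $B^N$ as moment matrices of one and the same polynomial vector, and then to pull the change-of-basis matrix $\widetilde\Lambda_N$ out of the integral. First I would read off the entries of $C^N$ and $C^{N+1}$ in the notation of (\ref{CT_newdef}): comparing this notation with the spectral representation (\ref{SP_mes_d}) (taken with $T=N$, resp.\ $T=N+1$) shows that
\begin{equation*}
c_{i,j}=\int_{-\infty}^{\infty}\mathcal{T}_i(\lambda)\mathcal{T}_j(\lambda)\,d\rho(\lambda)
\end{equation*}
for all admissible $i,j$, where by (\ref{Chebysh}) $\mathcal{T}_0\equiv 0$ and we adopt the convention $c_{i,0}:=0$. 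Setting $\tau(\lambda):=\bigl(\mathcal{T}_N(\lambda),\mathcal{T}_{N-1}(\lambda),\dots,\mathcal{T}_1(\lambda)\bigr)^{*}=J_N\bigl(\mathcal{T}_1(\lambda),\dots,\mathcal{T}_N(\lambda)\bigr)^{*}$ and using that the $(p,q)$ entry of $C^N$ is $c_{N+1-p,\,N+1-q}$, this is precisely the statement that $C^N=\int_{-\infty}^{\infty}\tau(\lambda)\,\tau(\lambda)^{*}\,d\rho(\lambda)$.

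Next I would invoke (\ref{Response_moments_rel}), i.e.\ $\bigl(\mathcal{T}_1,\dots,\mathcal{T}_N\bigr)^{*}=\Lambda_N\bigl(1,\lambda,\dots,\lambda^{N-1}\bigr)^{*}$, to obtain
\begin{equation*}
\tau(\lambda)=J_N\Lambda_N\bigl(1,\dots,\lambda^{N-1}\bigr)^{*}=J_N\Lambda_NJ_N\cdot J_N\bigl(1,\dots,\lambda^{N-1}\bigr)^{*}=\widetilde\Lambda_N\,\mu(\lambda),
\end{equation*}
where $\mu(\lambda):=\bigl(\lambda^{N-1},\dots,\lambda,1\bigr)^{*}$. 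Substituting this into the integral representation of $C^N$ and pulling the constant matrices outside gives
\begin{equation*}
C^N=\widetilde\Lambda_N\Bigl(\int_{-\infty}^{\infty}\mu(\lambda)\mu(\lambda)^{*}\,d\rho(\lambda)\Bigr)\bigl(\widetilde\Lambda_N\bigr)^{*},
\end{equation*}
and since the $(p,q)$ entry of the inner matrix is $\int_{-\infty}^{\infty}\lambda^{N-p}\lambda^{N-q}\,d\rho(\lambda)=s_{2N-p-q}=\{S_0^N\}_{p,q}$, this is exactly (\ref{CT_S0}).

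For (\ref{BT_S1}) I would repeat the argument after one observation. From (\ref{BN_def}) the $(p,q)$ entry of $B^N$ equals $c_{N+1-p,\,N+2-q}+c_{N+1-p,\,N-q}$, the second term being $c_{i,0}=0$ when $q=N$; applying the Chebyshev recurrence (\ref{Chebysh}), $\mathcal{T}_{j+1}+\mathcal{T}_{j-1}=\lambda\mathcal{T}_j$, under the integral collapses this to $\{B^N\}_{p,q}=\int_{-\infty}^{\infty}\lambda\,\mathcal{T}_{N+1-p}(\lambda)\mathcal{T}_{N+1-q}(\lambda)\,d\rho(\lambda)$, that is, $B^N=\int_{-\infty}^{\infty}\lambda\,\tau(\lambda)\tau(\lambda)^{*}\,d\rho(\lambda)=\widetilde\Lambda_N\bigl(\int_{-\infty}^{\infty}\lambda\,\mu(\lambda)\mu(\lambda)^{*}\,d\rho(\lambda)\bigr)\bigl(\widetilde\Lambda_N\bigr)^{*}$; the inner matrix now has $(p,q)$ entry $s_{2N+1-p-q}=\{S_1^N\}_{p,q}$, which is (\ref{BT_S1}). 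Finally, $\Lambda_N$ is lower triangular with unit diagonal by (\ref{LambdaMatr}) and $J_N$ is invertible, so $\widetilde\Lambda_N$ is invertible; inserting (\ref{CT_S0})--(\ref{BT_S1}) into $B^Nf_k=\lambda_kC^Nf_k$ and cancelling $\widetilde\Lambda_N$ on the left turns it into $S_1^Ng_k=\lambda_kS_0^Ng_k$ with $g_k=\bigl(\widetilde\Lambda_N\bigr)^{*}f_k$, as claimed.

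The genuinely routine parts are checking that $\Lambda_N$ has the stated triangular form and that the inner moment matrices coincide with $S_0^N$, $S_1^N$; the one place that requires care is the index bookkeeping — making sure the reversal $J_N$ is what converts $\Lambda_N$ into $\widetilde\Lambda_N$ in $\tau=\widetilde\Lambda_N\mu$, and that the extreme columns of $B^N$ (where a neighbour index drops below $1$ or climbs to $N+1$) are correctly absorbed by the conventions $\mathcal{T}_0=0$, $c_{i,0}=0$ and by passing from $C^N$ to $C^{N+1}$. Once these are in order the statement follows by direct substitution.
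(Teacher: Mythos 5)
Your proposal is correct and follows essentially the same route as the paper: both write $C^N$ and $B^N$ as integrals of the outer product of the reversed Chebyshev vector, use $\mathcal{T}=\Lambda_N(1,\lambda,\dots,\lambda^{N-1})^{*}$ together with the insertion of $J_NJ_N$ to factor out $\widetilde\Lambda_N$, apply the recurrence $\mathcal{T}_{j+1}+\mathcal{T}_{j-1}=\lambda\mathcal{T}_j$ (with $\mathcal{T}_0=0$) to identify the inner matrices with $S_0^N$ and $S_1^N$, and then substitute into $B^Nf_k=\lambda_kC^Nf_k$. Your explicit remark that $\widetilde\Lambda_N$ is invertible (unit lower triangular $\Lambda_N$ conjugated by $J_N$) is a small but welcome addition the paper leaves implicit.
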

\begin{proof}
Using (\ref{CT_newdef}) and the representation (\ref{SP_mes_d}),
we have that entries of $C^N$ have a form:
\begin{equation*}
c_{ij}=\int_{-\infty}^\infty
\mathcal{T}_{N-i+1}(\lambda)\mathcal{T}_{N-j+1}(\lambda)\,d\rho(\lambda),\quad
i,j=i,\ldots,N.
\end{equation*}
We can write down the form of the operator $C^N$ then:
\begin{equation}
\label{CN_new} C^N=\int_{-\infty}^\infty
\begin{pmatrix}\mathcal{T}_{N}(\lambda)\\\mathcal{T}_{N-1}(\lambda)\\\cdot\\\mathcal{T}_1(\lambda)\end{pmatrix}\otimes
\begin{pmatrix}\mathcal{T}_{N}(\lambda)\\\mathcal{T}_{N-1}(\lambda)\\\cdot\\\mathcal{T}_1(\lambda)\end{pmatrix}\,d\rho(\lambda).
\end{equation}
Using (\ref{Response_moments_rel}) we can rewrite (\ref{CN_new})
as
\begin{eqnarray*}
C^N=\int_{-\infty}^\infty J_N\Lambda_N J_NJ_N\begin{pmatrix} 1\\
\lambda\\ \cdot\\ \lambda^{N-1}\end{pmatrix}\otimes
J_N\Lambda_N J_NJ_N\begin{pmatrix} 1\\
\lambda\\ \cdot\\ \lambda^{N-1}\end{pmatrix}\,d\rho(\lambda)\\
=\int_{-\infty}^\infty \widetilde\Lambda_N\begin{pmatrix} \lambda^{N-1}\\
\lambda^{N-2}\\ \cdot\\ 1\end{pmatrix}\otimes
\widetilde\Lambda_N \begin{pmatrix} \lambda^{N-1}\\
\lambda^{N-2}\\ \cdot\\ 1\end{pmatrix}\,d\rho(\lambda)\\
=\widetilde\Lambda_N \int_{-\infty}^\infty \begin{pmatrix} \lambda^{N-1}\\
\lambda^{N-2}\\ \cdot\\ 1\end{pmatrix}\otimes \begin{pmatrix} \lambda^{N-1}\\
\lambda^{N-2}\\ \cdot\\ 1\end{pmatrix}\,d\rho(\lambda)
\left(\widetilde\Lambda_N\right)^*=\widetilde\Lambda_N S_0^N
\left(\widetilde\Lambda_N\right)^*,
\end{eqnarray*}
which proves (\ref{CT_S0}).

Using the representation of $B^N$ (\ref{BN_def}) and
(\ref{SP_mes_d}) yields the following formula for entries $b_{ij}$
of $B^N$:
\begin{equation*}
b_{ij}=\int_{-\infty}^\infty
\mathcal{T}_{N-i+1}(\lambda)\left(\mathcal{T}_{N-j+2}(\lambda)+\mathcal{T}_{N-j}(\lambda)\right)\,d\rho(\lambda),\quad
i,j=i,\ldots,N,
\end{equation*}
where we counted that $\mathcal{T}_0(\lambda)=0$. Making use of
(\ref{Chebysh}) leads to:
\begin{equation} \label{Bij}
b_{ij}=\int_{-\infty}^\infty \mathcal{T}_{N-i+1}(\lambda)\lambda
\mathcal{T}_{N-j+1}(\lambda)\,d\rho(\lambda),\quad i,j=i,\ldots,N.
\end{equation}
Then using (\ref{Response_moments_rel}) and (\ref{Bij}) we obtain:
\begin{eqnarray*}
B^N=\int_{-\infty}^\infty J_N\Lambda_N J_NJ_N\lambda\begin{pmatrix} 1\\
\lambda\\ \cdot\\ \lambda^{N-1}\end{pmatrix}\otimes
J_N\Lambda_N J_NJ_N\begin{pmatrix} 1\\
\lambda\\ \cdot\\ \lambda^{N-1}\end{pmatrix}\,d\rho(\lambda)\\
=\int_{-\infty}^\infty \widetilde\Lambda_N\begin{pmatrix} \lambda^{N}\\
\lambda^{N}\\ \cdot\\ \lambda\end{pmatrix}\otimes
\widetilde\Lambda_N \begin{pmatrix} \lambda^{N-1}\\
\lambda^{N-2}\\ \cdot\\ 1\end{pmatrix}\,d\rho(\lambda)\\
=\widetilde\Lambda_N \int_{-\infty}^\infty \begin{pmatrix} \lambda^{N}\\
\lambda^{N-1}\\ \cdot\\ \lambda\end{pmatrix}\otimes \begin{pmatrix} \lambda^{N-1}\\
\lambda^{N-2}\\ \cdot\\ 1\end{pmatrix}\,d\rho(\lambda)
\left(\widetilde\Lambda_N\right)^*=\widetilde\Lambda_N S_1^N
\left(\widetilde\Lambda_N\right)^*,
\end{eqnarray*}
which gives (\ref{BT_S1}). Then (\ref{m_eqn2}) is a consequence of
(\ref{CT_S0}) and (\ref{BT_S1}).
\end{proof}

\subsection{Special cases: Hamburger, Stieltjes and Hausdorff moment problems}
In the previous sections we constructed the special measure
corresponding to operator $A^N$ with Dirichlet boundary condition
at $n=N+1$ and which gives a solution of a truncated moment
problem. Here we formulate several consequences of Theorems
\ref{teor}, \ref{Propos_EL}.

Bearing in mind the relationship between elements of response
vector and moments (\ref{Resp_spectr}) and the formula
(\ref{CT_S0}), we can reformulate Theorem \ref{Th_char} as
\begin{proposition}
\label{Th_char_new} The  set of numbers
$(s_0,s_1,s_2,\ldots,s_{2N-2})$ are moments of a spectral measure
corresponding to the Jacobi operator $A^N$ with Dirichlet boundary
condition at $n=N+1$ if and only if
\begin{equation}
\label{HamCnd} \text{the matrix $S^N_0$ is positive definite.}
\end{equation}

\end{proposition}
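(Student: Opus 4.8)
The plan is to obtain Proposition~\ref{Th_char_new} as a direct translation of Theorem~\ref{Th_char} under the invertible linear change of variables that ties the moments to the response vector. First I would make precise that the left-hand side of the asserted equivalence is nothing but the statement ``$(r_0,\ldots,r_{2N-2})$, computed from $(s_0,\ldots,s_{2N-2})$ via (\ref{Resp_and_moments}), is a response vector of the system (\ref{Jacobi_dyn_int})''. Indeed, by (\ref{Resp_and_moments}) with $n=2N-1$ the correspondence $(s_0,\ldots,s_{2N-2})\mapsto(r_0,\ldots,r_{2N-2})=\Lambda_{2N-1}(s_0,\ldots,s_{2N-2})^{\top}$ is realized by the lower-triangular matrix $\Lambda_{2N-1}$ with units on the diagonal, hence is a bijection of $\mathbb{R}^{2N-1}$; and by the spectral representation (\ref{Resp_spectr}) together with the construction of $\rho^N$ in (\ref{measure}) and the classical one-to-one correspondence between finite Jacobi matrices $A^N$ (with Dirichlet condition at $n=N+1$) and their Dirichlet spectral data $\{\lambda_k,\rho_k\}_{k=1}^N$ (see \cite{Ahiez,S}), a vector $(r_0,\ldots,r_{2N-2})$ arises as the response vector of (\ref{Jacobi_dyn_int}) for some such $A^N$ exactly when the $s_k$ recovered by inverting (\ref{Resp_and_moments}) are the first $2N-1$ moments of the Dirichlet spectral measure of that $A^N$.

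Next I would invoke Theorem~\ref{Th_char}: this last condition holds if and only if the connecting matrix $C^N$, expressed through $(r_0,\ldots,r_{2N-2})$ by (\ref{C_T_repr}), is positive definite. Then I would bring in the factorization (\ref{CT_S0}) already established in Theorem~\ref{Propos_EL}, namely $C^N=\widetilde\Lambda_N S_0^N(\widetilde\Lambda_N)^{*}$. Since $\Lambda_N$ is lower-triangular with ones on the diagonal it is invertible, and $J_N$ is invertible ($J_NJ_N=I_N$), so $\widetilde\Lambda_N=J_N\Lambda_N J_N$ is invertible. For an invertible $\widetilde\Lambda_N$ the congruence $C^N=\widetilde\Lambda_N S_0^N(\widetilde\Lambda_N)^{*}$ yields $C^N>0\iff S_0^N>0$ (Sylvester's law of inertia, or simply: $(C^Nx,x)=(S_0^N(\widetilde\Lambda_N)^{*}x,(\widetilde\Lambda_N)^{*}x)$ and $(\widetilde\Lambda_N)^{*}$ is a bijection). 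Chaining the three equivalences produces exactly the condition (\ref{HamCnd}).

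There is essentially no hard analytic content here; the argument is bookkeeping built on results already proved. The one point that deserves care is the first paragraph: spelling out that ``being a response vector of (\ref{Jacobi_dyn_int})'' carries the same information as ``being the moment sequence of the Dirichlet spectral measure of $A^N$''. I would dispatch it by appealing to the spectral representation (\ref{Resp_spectr}), the invertibility of the transformation in (\ref{Resp_and_moments}), and the classical bijection between finite Jacobi matrices and their Dirichlet spectral data from \cite{Ahiez,S}; strict (rather than merely semi-) positive definiteness of $S_0^N$ is precisely what is needed, since the measure $\rho^N$ has exactly $N$ atoms, and this strictness is already built into Theorem~\ref{Th_char}. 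Everything else reduces to the invertibility of $\widetilde\Lambda_N$ and Sylvester's law applied to (\ref{CT_S0}).
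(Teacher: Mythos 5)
Your argument is correct and follows essentially the same route as the paper, which simply states the proposition as a reformulation of Theorem~\ref{Th_char} obtained by combining the moment--response-vector correspondence (\ref{Resp_and_moments})/(\ref{Resp_spectr}) with the congruence $C^N=\widetilde\Lambda_N S_0^N(\widetilde\Lambda_N)^{*}$ from (\ref{CT_S0}) and the invertibility of $\widetilde\Lambda_N$. In fact you spell out the bookkeeping (bijectivity of the triangular transform, Sylvester's law of inertia) in more detail than the paper does.
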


The Stieltjes moment problem is characterized by the positivity of
a support of a measure. That means the positivity of a spectrum of
$A^N$. The latter leads to the following
\begin{proposition}
The  set of numbers $(s_0,s_1,s_2,\ldots,s_{2N-1})$ are moments of
a spectral measure, supported on $(0,+\infty)$, corresponding to
Jacobi operator $A^N$ with Dirichlet boundary condition at $n=N+1$
if and only if
\begin{equation}
\label{StielCnd} \text{matrices $S^N_0$ and $S^N_1$ are positive
definite.}
\end{equation}
\end{proposition}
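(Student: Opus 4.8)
The plan is to combine the Hamburger characterisation of Proposition~\ref{Th_char_new} with the fact, established in Theorems~\ref{teor} and~\ref{Propos_EL}, that the spectrum of $A^N$ is exactly the set of generalized eigenvalues of the problem $S^N_1 g=\lambda S^N_0 g$ (equivalently $B^N f=\lambda C^N f$). Under this correspondence ``$\operatorname{supp}d\rho^N\subset(0,+\infty)$'' becomes ``all generalized eigenvalues positive'', and the latter is governed by the sign of $B^N$. First I would record two preliminary facts. Since $\widetilde\Lambda_N$ is invertible (it is $J_N$ times the unitriangular matrix $\Lambda_N$ times $J_N$), the congruences (\ref{CT_S0}), (\ref{BT_S1}) give $S^N_0>0\Leftrightarrow C^N>0$ and $S^N_1>0\Leftrightarrow B^N>0$. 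And from (\ref{CN_new}) and (\ref{Bij}), whenever $C^N,B^N$ are built from the moments of a measure $d\rho$, one has, writing $p_f(\lambda)=\sum_{i=1}^N f_i\,\mathcal{T}_{N-i+1}(\lambda)$ for $f\in\mathcal{F}^N$,
\[
(C^N f,f)_{\mathcal{F}^N}=\int_{-\infty}^{\infty}p_f(\lambda)^2\,d\rho(\lambda),\qquad
(B^N f,f)_{\mathcal{F}^N}=\int_{-\infty}^{\infty}\lambda\,p_f(\lambda)^2\,d\rho(\lambda).
\]

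For necessity, suppose $s_0,\dots,s_{2N-1}$ are the moments of the spectral measure $d\rho^N$ of $A^N$ with $\operatorname{supp}d\rho^N\subset(0,+\infty)$. Then $s_0,\dots,s_{2N-2}$ are still its moments, so Proposition~\ref{Th_char_new} gives $S^N_0>0$. For $S^N_1$ I would apply the second identity with $d\rho=d\rho^N$: the integrand $\lambda\,p_f(\lambda)^2$ is non-negative on the support, so $(B^N f,f)\ge 0$; moreover $d\rho^N$ has $N$ atoms, while $\mathcal{T}_1,\dots,\mathcal{T}_N$ span the polynomials of degree $\le N-1$ and are therefore linearly independent, so $p_f$ cannot vanish at all $N$ atoms unless $f=0$. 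Hence $B^N>0$, i.e.\ $S^N_1>0$.

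For sufficiency, assume $S^N_0>0$ and $S^N_1>0$, equivalently $C^N>0$ and $B^N>0$. From $C^N>0$ and Theorem~\ref{Th_char} (or Proposition~\ref{Th_char_new}) the numbers $s_0,\dots,s_{2N-2}$ are the moments of the spectral measure $d\rho^N$ of a Jacobi matrix $A^N$ with Dirichlet condition at $n=N+1$; concretely, since $S^N_0>0$ the generalized problem (\ref{m_eqn2}) is well posed, and by Theorems~\ref{teor}, \ref{Propos_EL} its solution (with the normalization described after Theorem~\ref{teor}) yields the Dirichlet spectral data $\{\lambda_k,\rho_k\}_{k=1}^N$ of $A^N$, whence $d\rho^N$ by (\ref{measure}). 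Since now also $B^N>0$, taking the eigenvectors $C^N$-orthonormal gives $\lambda_k=(B^N f_k,f_k)_{\mathcal{F}^N}>0$ for every $k$, so $\operatorname{supp}d\rho^N=\{\lambda_k\}_{k=1}^N\subset(0,+\infty)$. It remains to see that $d\rho^N$ reproduces the top prescribed moment, $\int_{-\infty}^{\infty}\lambda^{2N-1}\,d\rho^N=s_{2N-1}$: here I would use that $s_{2N-1}$ enters the construction only through the single entry $b_{1,1}=\int_{-\infty}^{\infty}\lambda\,\mathcal{T}_N(\lambda)^2\,d\rho^N$ of $B^N$ that contains it (the coefficient of $\lambda^{2N-1}$ there being nonzero), so that the identity $b_{ij}=\int_{-\infty}^{\infty}\lambda\,\mathcal{T}_{N-i+1}(\lambda)\mathcal{T}_{N-j+1}(\lambda)\,d\rho^N$ — already known in every entry not involving $s_{2N-1}$ because $d\rho^N$ has the moments $s_1,\dots,s_{2N-2}$ — forces $s_{2N-1}=\int_{-\infty}^{\infty}\lambda^{2N-1}\,d\rho^N$. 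Thus $d\rho^N$ is the spectral measure of $A^N$, is supported in $(0,+\infty)$, and has moments $s_0,\dots,s_{2N-1}$.

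I expect the main obstacle to be exactly this last bookkeeping step: the Hamburger condition (\ref{HamCnd}) does not see $s_{2N-1}$ at all, so one must check carefully that $B^N$ (equivalently $S^N_1$) both depends on the prescribed data only through $s_1,\dots,s_{2N-1}$ — the entry $r_{2N}$ of $C^{N+1}$ being annihilated by the projections $P,P^{*}$ in (\ref{BN_def}) — and encodes $s_{2N-1}$ in precisely one of its entries, so that the reconstructed measure is consistent with the full data $(s_0,\dots,s_{2N-1})$. Everything else is a mechanical consequence of Proposition~\ref{Th_char_new}, the two quadratic-form representations above, and Theorems~\ref{teor}, \ref{Propos_EL}.
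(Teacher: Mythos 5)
Your necessity argument, and your reduction of ``support in $(0,+\infty)$'' to positivity of the generalized eigenvalues of $S^N_1g=\lambda S^N_0g$, are correct and are essentially the paper's (very terse) route: the paper merely observes that positivity of the support means positivity of the spectrum of $A^N$ and reads this off from (\ref{m_eqn2}) via the Rayleigh quotient, exactly as it does explicitly in the Hausdorff case.

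The gap is in the sufficiency direction, at precisely the point you flag. Theorems \ref{teor} and \ref{Propos_EL} are statements about the matrices $B^N$, $C^N$, $C^{N+1}$ of an actual dynamical system, i.e.\ they presuppose that $r_0,\dots,r_{2N-1}$ (equivalently $s_0,\dots,s_{2N-1}$) are genuine response/moment data; for the top number $s_{2N-1}$ this is exactly what has to be proved, so you cannot invoke these theorems to interpret the solutions of (\ref{m_eqn2}), formed from the raw data, as Dirichlet spectral data of some $A^N$. What Proposition \ref{Th_char_new} gives is only that $s_0,\dots,s_{2N-2}$ are realized; the realizing matrix is not unique, because $b_N$ is not determined by these moments ($b_N$ first enters $r_{2N-1}$, i.e.\ $s_{2N-1}$), and for any fixed admissible $A^N$ the number $\int\lambda^{2N-1}\,d\rho^N$ is already determined and has no reason to equal the prescribed $s_{2N-1}$. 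Your closing ``forces'' step is therefore a non sequitur: agreement of all entries of $B^N$ except the $(1,1)$ entry does not force agreement of the $(1,1)$ entry --- perturb $s_{2N-1}$ slightly, keeping $S^N_1>0$; then $S^N_0$, hence the family of admissible $A^N$, is unchanged, while your claimed identity would assign two different values to the same integral. The missing idea is to use the one remaining degree of freedom: with $a_1,\dots,a_{N-1},b_1,\dots,b_{N-1}$ fixed by $s_0,\dots,s_{2N-2}$, the moment $\int\lambda^{2N-1}\,d\rho^N=\left((A^N)^{2N-1}e_1,e_1\right)$ is an affine function of $b_N$ with slope $\prod_{k=1}^{N-1}a_k^2>0$, so a unique choice of $b_N$ matches $s_{2N-1}$. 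With that choice the whole set $s_0,\dots,s_{2N-1}$ consists of moments of the Dirichlet spectral measure of $A^N$, the data-built $B^N$ coincides with the system-built one, and then your necessity computation (or the Rayleigh quotient for (\ref{m_eqn2}) together with $S^N_1>0$) yields positivity of the spectrum. Equivalently one may prove that the measure produced from (\ref{m_eqn2}) reproduces all moments up to order $2N-1$ (Gaussian-quadrature exactness), but that also requires an argument, not just the matching of the remaining entries of $B^N$.
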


In the Hausdorff moment problem the measure is supported on
$(0,1)$, which leads to the following
\begin{proposition}
The  set of numbers $(s_0,s_1,s_2,\ldots,s_{2N-1})$ are moments of
a spectral measure, supported on $(0,1)$, corresponding to
operator $A^N$ with Dirichlet boundary condition at $n=N+1$, if
and only if the condition
\begin{equation}
\label{Hausd_inq} S^N_0\geqslant S^N_1 > 0
\end{equation}
 holds.
\end{proposition}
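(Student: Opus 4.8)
The plan is to reduce the support condition on the measure to the operator inequality $0<A^N<I_N$ and then transport this inequality through Theorems \ref{teor} and \ref{Propos_EL}. Indeed, the spectral function (\ref{measure}) of $A^N$ is a step function whose jumps occur exactly at the eigenvalues $\lambda_1<\dots<\lambda_N$ of $A^N$, so the spectral measure of $A^N$ is supported on $(0,1)$ if and only if every $\lambda_k$ lies in $(0,1)$, i.e. if and only if $A^N$ is a self-adjoint operator on $\mathbb{R}^N$ with $0<A^N<I_N$. Thus it suffices to characterize $0<A^N<I_N$ by inequalities on the Hankel matrices $S_0^N$ and $S_1^N$.

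The mechanism for this is that $C^N$ and $B^N$ are, respectively, the Gram matrices of the identity and of $A^N$ in the ``control frame''. Writing $W^Nf=v^f_{\cdot,N}$, the identities $(C^Nf,g)_{\mathcal{F}^N}=(v^f_{\cdot,N},v^g_{\cdot,N})_{\mathcal{H}^N}$ and $(B^Nf,g)_{\mathcal{F}^N}=((A^Nv^f)_{\cdot,N},v^g_{\cdot,N})_{\mathcal{H}^N}$ — both already derived inside the proof of Theorem \ref{teor} — mean that $C^N=(W^N)^*W^N$ and $B^N=(W^N)^*A^NW^N$. Since $W^N$ is an isomorphism of $\mathcal{F}^N$ onto $\mathcal{H}^N$ (Lemma \ref{teor_control} together with (\ref{Rav_bc})), congruence by $W^N$ preserves sign-definiteness of self-adjoint matrices; hence $A^N>0\iff B^N>0$, and $I_N-A^N>0\iff (W^N)^*(I_N-A^N)W^N=C^N-B^N>0$. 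Therefore $0<A^N<I_N$ is equivalent to $B^N>0$ and $C^N>B^N$.

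It remains to rewrite the last two inequalities in terms of $S_0^N,S_1^N$. By Theorem \ref{Propos_EL}, $C^N=\widetilde\Lambda_N S_0^N(\widetilde\Lambda_N)^*$ and $B^N=\widetilde\Lambda_N S_1^N(\widetilde\Lambda_N)^*$, and $\widetilde\Lambda_N=J_N\Lambda_N J_N$ is invertible because $\Lambda_N$ is unipotent lower triangular (see (\ref{LambdaMatr})) and $J_N^2=I_N$. Congruence by $\widetilde\Lambda_N$ then turns $B^N>0$ into $S_1^N>0$, and turns $C^N-B^N=\widetilde\Lambda_N(S_0^N-S_1^N)(\widetilde\Lambda_N)^*>0$ into $S_0^N>S_1^N$, which together give $S_0^N\geqslant S_1^N>0$, that is (\ref{Hausd_inq}). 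Every step is an equivalence, so both directions follow. For the ``if'' direction one still needs the $s_j$ to come from an actual Jacobi matrix: but $S_0^N>S_1^N>0$ forces $S_0^N>0$, so Proposition \ref{Th_char_new} (equivalently Theorem \ref{Th_char}) produces $A^N$ with Dirichlet condition at $n=N+1$ having $s_0,\dots,s_{2N-2}$ as its first moments, and the identity $B^N=\widetilde\Lambda_N S_1^N(\widetilde\Lambda_N)^*$, which rests on the spectral representation (\ref{SP_mes_d}), says precisely that $s_1,\dots,s_{2N-1}$ are the matching moments of its spectral measure.

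The matrix manipulations are all routine and already available; the main point to get right is the identification $B^N=(W^N)^*A^NW^N$, $C^N=(W^N)^*W^N$ in one and the same frame, which is what makes ``$A^N<I_N$'' translate into ``$C^N-B^N>0$'', together with the invertibility of $W^N$ and $\widetilde\Lambda_N$ so that the order relation is preserved all the way down to the Hankel matrices. A minor caveat is that the measure here is finitely atomic, so ``support on $(0,1)$'' means the $\lambda_k$ lie strictly inside $(0,1)$; accordingly the inequalities in (\ref{Hausd_inq}) are to be read as the strict ones delivered by the congruence argument.
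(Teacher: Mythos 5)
Your core argument is sound and is a cleaner, more complete variant of what the paper does. The paper's own proof is a two-line remark: it invokes the generalized spectral problem (\ref{m_eqn2}) and writes $\lambda_k=\bigl(S^N_1g_k,g_k\bigr)/\bigl(S^N_0g_k,g_k\bigr)$, concluding that $\lambda_k\in(0,1)$ forces (\ref{Hausd_inq}); sufficiency is not argued at all. You instead avoid the generalized eigenvectors and transport the operator inequality $0<A^N<I_N$ by two congruences, $C^N=(W^N)^*W^N$, $B^N=(W^N)^*A^NW^N$ (both implicit in the proof of Theorem \ref{teor}) and then $C^N=\widetilde\Lambda_N S^N_0(\widetilde\Lambda_N)^*$, $B^N=\widetilde\Lambda_N S^N_1(\widetilde\Lambda_N)^*$ from Theorem \ref{Propos_EL}, using invertibility of $W^N$ and of the unipotent $\widetilde\Lambda_N$. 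This buys you genuine equivalences in both directions rather than the paper's one-way Rayleigh-quotient observation, and your closing caveat about strict versus non-strict inequalities matches an imprecision already present in the paper's statement, so it is not held against you.

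There is, however, one soft spot in your ``if'' direction, precisely at the extra moment $s_{2N-1}$. Proposition \ref{Th_char_new} (Theorem \ref{Th_char}) constructs $A^N$ from $s_0,\ldots,s_{2N-2}$ only; these data determine $a_1,\ldots,a_{N-1}$ and $b_1,\ldots,b_{N-1}$ but leave $b_N$ (equivalently $r_{2N-1}$, equivalently the $(2N-1)$-th moment of the Dirichlet spectral measure) completely free. Your sentence that the identity $B^N=\widetilde\Lambda_N S^N_1(\widetilde\Lambda_N)^*$ ``says precisely that $s_1,\ldots,s_{2N-1}$ are the matching moments'' is circular as written: that identity, as proved via (\ref{SP_mes_d}), relates the system's $B^N$ to the Hankel matrix built from the \emph{actual} moments of the constructed measure, and gives no information about whether the actual $(2N-1)$-th moment coincides with the prescribed $s_{2N-1}$. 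The gap is easily closed: the $(2N-1)$-th moment of the measure of $A^N$ with Dirichlet condition at $n=N+1$ depends affinely on $b_N$ with coefficient $\prod_{k=1}^{N-1}a_k^2>0$, so $b_N$ can (and must) be chosen using the prescribed $s_{2N-1}$; only after this choice does your congruence equivalence $S^N_1>0$, $S^N_0-S^N_1\geqslant 0\iff 0<A^N\leqslant I_N$ apply to locate the spectrum in the required interval. With that one additional line your proof is complete, and indeed more complete than the one in the paper.
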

\begin{proof}
From (\ref{m_eqn2}) it follows that
\begin{equation}
\lambda_k=\frac{\left(S^N_1g_k,g_k\right)}{\left(S^T_0g_k,g_k\right)},\quad
k=1,\ldots,N.
\end{equation}
Then the restriction $\lambda_k\in (0,1)$ implies
(\ref{Hausd_inq}).
\end{proof}

\begin{remark}
Given an infinite sequence of moments, one can determine whether
or not it is Hamburger or Stieltjes or Hausdorff moment sequence
by verifying condition (\ref{HamCnd}) or (\ref{StielCnd}) or
(\ref{Hausd_inq}) holds for all $N\in \mathbb{N}$.
\end{remark}

\subsection{Recovering Jacobi matrix, nonuniqueness of the solution of the truncated moment problem.}

As we mentioned, given a sequence of moments
$\{s_0,s_1,\ldots,s_{2N-2}\}$ or, equivalently, entries of the
response vector, $\{r_0,r_1,\ldots,r_{2N-2}\}$, it is possible to
recover the Jacobi matrix $A^N$ \cite{MM,MM3}. Introduce the
matrices
\begin{gather*}
\overline C^k=J_kC^kJ_k,\quad k\in \mathbb{N},\\
\overline C^{k-1}_k:=\begin{pmatrix}
\overline c_{1,\,1} & .. & .. &  \overline c_{1,\,k-2} &  \overline c_{1,\,k}\\
.. & .. & ..  &..\\
\cdot & \cdot & \cdot & \cdot  \\
\overline c_{k-1,\,1} &.. &   & \overline c_{k-1,\,k-2}& \overline
c_{k-1,\,k}
\end{pmatrix},\quad k\in \mathbb{N}.
\end{gather*}
that is $\overline C^{k-1}_k$ is constructed from the matrix
$C^{k-1}$ by substituting the last column by $(\overline
c_{1,\,k},\ldots,\overline c_{k-1,\,k})^T$.

\begin{proposition}
The entries of Jacobi matrix can be recovered by
\begin{equation}
\label{AK} a_k=\frac{\left(\det{\overline
C^{k+1}}\right)^{\frac{1}{2}}\left(\det{\overline
C^{k-1}}\right)^{\frac{1}{2}}}{\det{\overline C^{k}}}, \quad
k=1,\ldots, N-1,
\end{equation}
where we set $\det{C^0}=1,$ $\det{C^{-1}=1}$.
\begin{equation}
\label{BK} b_k=-\frac{\det{\overline C^{k}_{k+1}}}{\det{\overline
C^{k}}}+\frac{\det{\overline C^{k-1}_k}}{\det{\overline C^{k-1}}},
\quad k=1,\ldots, N.
\end{equation}
\end{proposition}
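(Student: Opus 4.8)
The plan is to extract both formulas from the factorization $C^N=(W^N)^*W^N$ together with the explicit form \eqref{WT} of the control operator $W^N$: in \eqref{WT} the matrix $W^N$ has vanishing $(k,j)$-entry for $j>N+1-k$ and carries the products $\prod_{j=0}^{k-1}a_j$ on its antidiagonal. Hence, reversing the columns, $\widetilde U:=W^NJ_N$ is \emph{upper triangular} with $\widetilde U_{ii}=\prod_{j=0}^{i-1}a_j$, and since $J_N^*=J_N$ we obtain a Cholesky-type factorization
\begin{equation*}
\overline C^N=J_NC^NJ_N=(W^NJ_N)^*(W^NJ_N)=\widetilde U^*\widetilde U .
\end{equation*}
Because $C^k$ is the lower-right $k\times k$ block of $C^N$ (noted after \eqref{CT_newdef}), $\overline C^k$ is exactly the leading $k\times k$ principal submatrix of $\overline C^N$, so it equals $(\widetilde U^{[k]})^*\widetilde U^{[k]}$ for the leading block $\widetilde U^{[k]}$ of $\widetilde U$; therefore
\begin{equation*}
\det\overline C^k=\prod_{i=1}^{k}\widetilde U_{ii}^{\,2}=\prod_{i=1}^{k}\Bigl(\prod_{j=0}^{i-1}a_j\Bigr)^{\!2}>0 ,
\end{equation*}
positivity coming from $C^k>0$. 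Substituting this into the right-hand side of \eqref{AK} and telescoping leaves $\widetilde U_{k+1,k+1}/\widetilde U_{k,k}=a_k$ (positive roots being correct as $a_j>0$, the conventions $\det C^0=\det C^{-1}=1$ covering $k=1$), which is \eqref{AK}. Equivalently, by Theorem~\ref{Propos_EL} and $\det\widetilde\Lambda_k=1$ one has $\det\overline C^k=\det S_0^k$, a classical Hankel determinant, and \eqref{AK} is the classical relation $a_k^2=\det S_0^{k+1}\det S_0^{k-1}/(\det S_0^k)^2$.

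For \eqref{BK} I would use that $A^N$ is similar to $(C^N)^{-1}B^N$: Theorem~\ref{teor} gives $B^Nf_k=\lambda_kC^Nf_k$ with $W^Nf_k=\phi^k$ the eigenvectors of $A^N$, so $A^N=W^N(C^N)^{-1}B^N(W^N)^{-1}$; applying Theorem~\ref{teor} at level $k$ likewise shows that the pencil $(B^k,C^k)$ — equivalently its $J_k$-conjugate $(\overline B^k,\overline C^k)$ — has spectrum equal to that of $A^k$. By \eqref{SP_mes_d} and \eqref{Bij} (used at level $k$) the entries are $\overline c_{ij}=\int \mathcal T_i\,\mathcal T_j\,d\rho$ and $\overline b_{ij}=\int \mathcal T_i\,\lambda\,\mathcal T_j\,d\rho$. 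Hence $\sum_{j=1}^{k}b_j=\operatorname{tr}A^k=\operatorname{tr}\!\bigl((\overline C^k)^{-1}\overline B^k\bigr)$, which, read from the characteristic polynomial of the pencil $\det(\lambda\overline C^k-\overline B^k)$, equals $(\det\overline C^k)^{-1}$ times a sum of $k$ column-replacement determinants (in the $m$-th term the $m$-th column of $\overline C^k$ is replaced by the $m$-th column of $\overline B^k$). Now the three-term recurrence \eqref{Chebysh}, $\lambda\mathcal T_j=\mathcal T_{j+1}+\mathcal T_{j-1}$, shows the $m$-th column of $\overline B^k$ to be (the truncation of) the $(m{+}1)$-st column of $\overline C^{k+1}$ plus the $(m{-}1)$-st column of $\overline C^k$: for $m<k$ both summands are columns already present in $\overline C^k$, so the replaced matrix has a repeated column and vanishing determinant, and only $m=k$ survives, contributing — up to a sign produced by the $J_k$-reordering — the single determinant $\det\overline C^k_{k+1}$. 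Thus $\sum_{j=1}^{k}b_j=\pm\det\overline C^k_{k+1}/\det\overline C^k$, and subtracting the $k-1$ identity from the $k$ identity telescopes to \eqref{BK}, with the stated conventions (including the degenerate reading of $\overline C^0_1$) absorbing the small $k$.

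The formula for $a_k$ is essentially immediate once the triangularity of $W^NJ_N$ is noticed. The real work lies in \eqref{BK}, in two steps I would have to make precise: (i) that truncation commutes with passage to the pencil — i.e. that $A^k$ is realised by $(\overline B^k,\overline C^k)$ — which I would get from Theorem~\ref{teor} applied at level $k$ together with the block-nesting of $\{\overline C^k\}$ and $\{\overline B^k\}$ visible from their spectral representations; and (ii) the determinant bookkeeping collapsing the $k$-term cofactor expansion of $\operatorname{tr}((\overline C^k)^{-1}\overline B^k)$ into the single column-replaced determinant $\overline C^k_{k+1}$, where I must pin down the sign (here conjugation by $J_k$ and the precise definition of $\overline C^k_{k+1}$ interact) and the normalisation at $k=1$. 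I expect step (ii), routine but delicate, to be the main obstacle; the rest follows from the structural facts already recorded in the previous two sections.
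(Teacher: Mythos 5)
Your treatment of \eqref{AK} is correct and is precisely the argument the paper intends: the paper offers no proof beyond the remark that \eqref{AK}, \eqref{BK} are consequences of $C^T=(W^T)^*W^T$ and of the representation \eqref{WT} (details being left to \cite{MM,MM3}), and your observation that $W^NJ_N$ is upper triangular with diagonal entries $\prod_{j=0}^{i-1}a_j$, so that each $\overline C^k$ acquires a nested Cholesky-type factorization whose determinants telescope to $a_k$, simply supplies those details. For \eqref{BK} you take a genuinely different route: the paper's indicated derivation stays inside the same factorization (the $b$'s are read off the next-to-leading entries of the triangular factor $W^NJ_N$, i.e.\ the Goursat coefficients, which Cramer's rule converts into the column-replaced determinants), whereas you pass through the generalized spectral problem of Theorem~\ref{teor} at size $k$, compute $\operatorname{tr}\bigl((\overline C^k)^{-1}\overline B^k\bigr)$ by Cramer's rule as a sum of column-replacement determinants, and collapse it using $\lambda\mathcal T_j=\mathcal T_{j+1}+\mathcal T_{j-1}$. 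That alternative is sound: your step (i) is unproblematic, since Theorem~\ref{teor} holds for every size and the entries $\overline c_{ij}=\int\mathcal T_i\mathcal T_j\,d\rho$ do not depend on the size, and in step (ii) indeed only the $m=k$ term survives. The paper's route is lighter (one factorization yields both formulas, nothing beyond \eqref{WT} is used); yours is heavier but self-contained once Theorem~\ref{teor} is available, and it makes the link with the classical Hankel-determinant formulas transparent.

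Two points about the bookkeeping you deferred. First, there is no sign coming from the $J_k$-reordering: conjugation by $J_k$ changes neither the trace nor the matrix $\overline C^k_{k+1}$ (which is already defined through the $\overline c$-entries), and the surviving Cramer term is $+\det\overline C^k_{k+1}$. Hence your argument, completed, gives $\sum_{j=1}^{k}b_j=\det\overline C^k_{k+1}/\det\overline C^k$ and therefore $b_k=\det\overline C^k_{k+1}/\det\overline C^k-\det\overline C^{k-1}_{k}/\det\overline C^{k-1}$, i.e.\ \eqref{BK} with the two terms' signs interchanged. A direct check at $k=1$, where $\det\overline C^1_2/\det\overline C^1=\overline c_{12}/\overline c_{11}=r_1/r_0=b_1$ (and similarly at $k=2$ with explicit moments), shows that the signs your method produces are the correct ones and that \eqref{BK} as printed carries a sign slip; so do not search for a compensating sign in the reordering --- there is none to find. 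Second, make the convention $\det\overline C^0_1:=0$ explicit so the telescoping starts correctly at $k=1$. With these two items settled, your plan goes through.
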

Note that (\ref{AK}), (\ref{BK}) are the consequences of the
relation
\begin{equation*}
C^T=\left(W^T\right)^*W^T,\quad \text{or} \quad
\left(\left(W^T\right)^{-1}\right)^*C^T\left(W^T\right)^{-1}=I.
\end{equation*}
and representation (\ref{WT}) of $W^T$.


\begin{remark}
In order to apply the results of Theorems \ref{teor},
\ref{Propos_EL}, i.e. the generalized spectral problem
(\ref{m_eqn2}) to the problem of reconstruction of spectral
measure of $A^N$ one need to know one extra moment, specifically
$s_{2N-1}$ (see the definition of $S^N_1$), than in the method
based on direct calculation of $A^N$ by formulas (\ref{AK}) and
(\ref{BK}).
\end{remark}

Denote by $B(\mathbb{R})$ the space of Borel measures on
$\mathbb{R}$ and by $M_N\subset B(\mathbb{R})$ a subset such that
$d\nu(\lambda)\in M_N$ is a solution of the truncated moment
problem (\ref{Moment_eq}) of the order $N$. We used the BC method
to construct the \emph{special solution} of a truncated moment
problem: for $N\in \mathbb{N}$ the set of moments
$\{s_0,s_1,\ldots,s_{2N-1}\}$ determines the measure
$d\rho^N(\lambda)\in M_N$, where the constructed measure is a
spectral measure of a finite Jacobi operator $A^N$ with the
Dirichlet condition at $n=N+1$. We point out that in our procedure
we do not use the Jacobi matrix, but rather special Hankel
matrices, constructed from moments.

Having constructed the Jacobi matrix $A^N$ from the set
$\{s_0,s_1,\ldots,s_{2N-2}\}$ we can consider the operator
$\widetilde A^N$ given by mixed boundary condition at $n=N+1$:
$\alpha\phi_N+\beta\phi_{N+1}=0$ for some $\alpha,\beta\in
\mathbb{R}$, $|\alpha|+|\beta|>0$, or one can extend the matrix
$A^N$ in any way, keeping it to be Jacobi (it is possible that it
would be necessary to add the boundary condition at infinity).
Then the spectral measure to any of described operators also gives
a solution to Hamburger moment problem.

\begin{remark}
The spectral representation of (\ref{SP_mes_d}) implies that $M_N$
is a convex set in $B(\mathbb{R})$, and obviously
$M_{N_1}\subseteq M_{N_2}$ when $N_1>N_2$. Taking $N$ to infinity
we deduce that the set of solutions $M_\infty$ of the Hamburger
moment problem (\ref{Moment_eq}) either convex, or consists of one
element. The same arguments and spectral representation of $B^T$
(\ref{Bij}) shows that the set of solutions $M_\infty^s$ to
Stieltjes moment problem either convex or consists of one element.
\end{remark}

\section{On the uniqueness of the solution of the Hamburger, Stieltjes and Hausdorff moment problems.}

We remind the reader that the moment problem is called
\emph{determinate} if it has only one solution, otherwise it is
called \emph{indeterminate}.

In this section we use the complex-valued outer and inner spaces
for the dynamical system (\ref{Jacobi_dyn}):
$\mathcal{F}^T=\mathcal{H}^T=\mathbb{C}^T$ with the scalar
products $(f,g)_{\mathcal{F}^T}=\sum_{i=0}^{T-1}f_i\overline g_i$
and $(a,b)_{\mathcal{H}^T}=\sum_{i=1}^Ta_i\overline{b_i}$.

\subsection{Krein equations}
Let $\alpha,\beta\in \mathbb{R}$ and $y(\lambda)$ be a solution to
a Cauchy problem for the following difference equation (we remind
the agreement $a_0=1$):
\begin{equation}
\label{y_special} \left\{
\begin{array}l
a_ky_{k+1}+a_{k-1}y_{k-1}+b_ky_k=\lambda y_k,\\
y_0=\alpha,\,\, y_1=\beta.
\end{array}
\right.
\end{equation}
We set up the \emph{special control problem}: to find a control
$f^T\in \mathcal{F}^T$ that drives the system (\ref{Jacobi_dyn})
to the prescribed state
$\overline{y^T(\lambda)}:=\left(\overline{y_1(\lambda)},\ldots,\overline{y_T(\lambda)}\right)\in
\mathcal{H}^T$ at $t=T$:
\begin{equation}
\label{Control_probl} W^Tf^T=\overline{y^T(\lambda)},\quad
\left(W^Tf^T\right)_k=\overline{y_k(\lambda)},\quad k=1,\ldots,T.
\end{equation}
Note that due to Lemma \ref{teor_control}, this control problem
has a unique solution
$f^T=\left(W^T\right)^{-1}\overline{y^T(\lambda)}$. Let
$\varkappa^T(\lambda)$ be a solution to
\begin{equation}
\label{kappa} \left\{
\begin{array}l
\varkappa^T_{t+1}+\varkappa^T_{t-1}=\lambda\varkappa_t,\quad t=0,\ldots,T,\\
\varkappa^T_{T}=0,\,\, \varkappa^T_{T-1}=1.
\end{array}
\right.
\end{equation}
One can easily see the relation with Chebyshev polynomials
(\ref{Chebysh}):
\begin{equation}
\label{Cheb_rel}
\varkappa^T_t(\lambda)=\mathcal{T}_{T-t}(\lambda),\quad
t=0,1,\ldots,T.
\end{equation}
It is an important fact that the control $f^T$ can be found as a
solution to certain equation:
\begin{theorem}
\label{Theor_Krein} The control
$f^T=\left(W^T\right)^{-1}\overline{y^T(\lambda)}$ solving the
special control problem (\ref{Control_probl}), is a unique
solution to the following Krein-type equation in $\mathcal{F}^T$:
\begin{equation}
\label{C_T_Krein}
C^Tf^T=\beta\overline{\varkappa^T(\lambda)}-\alpha
\left(R^T\right)^*\overline{\varkappa^T(\lambda)}.
\end{equation}
\end{theorem}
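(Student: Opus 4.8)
The plan is to verify the equation \eqref{C_T_Krein} directly by testing it against an arbitrary control $g\in\mathcal{F}^T$ and using the defining quadratic form of the connecting operator together with the difference equation \eqref{y_special}, and then to deduce uniqueness from the invertibility of $C^T$ (Theorem on $C^T$ being an isomorphism). So first I would take $f^T=\left(W^T\right)^{-1}\overline{y^T(\lambda)}$, so that by definition $\left(W^Tf^T\right)_k=\overline{y_k(\lambda)}$ for $k=1,\ldots,T$, and compute $\left(C^Tf^T,g\right)_{\mathcal{F}^T}=\left(W^Tf^T,W^Tg\right)_{\mathcal{H}^T}=\sum_{k=1}^{T}\overline{y_k(\lambda)}\,u^g_{k,\,T}$ (with the complex-valued convention now in force, care with which slot is conjugated). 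The goal is to rewrite this last sum as $\beta\sum_{t=0}^{T-1}\overline{\varkappa^T_t(\lambda)}\,g_t-\alpha\sum_{t=0}^{T-1}\overline{\varkappa^T_t(\lambda)}\,\left(R^Tg\right)_t$, which is exactly $\left(\beta\overline{\varkappa^T(\lambda)}-\alpha\left(R^T\right)^*\overline{\varkappa^T(\lambda)},g\right)_{\mathcal{F}^T}$.

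The key step is a summation-by-parts (discrete Green's) identity. I would consider the bilinear expression $\sum_{n}\left(\overline{y_n}(A^{?}u^g)_{n,t}-(\text{shift of }u^g)_{n,t}\overline{y_n}\right)$ and use that $u^g$ solves \eqref{Jacobi_dyn} while $\overline{y}$ solves the stationary equation \eqref{y_special}; the standard Jacobi-type Wronskian/Abel identity collapses the spatial sum to a boundary term at $n=0$, which involves $u^g_{0,t}=g_t$ and $u^g_{1,t}=\left(R^Tg\right)_t$ together with the Cauchy data $y_0=\alpha$, $y_1=\beta$ (and $a_0=1$). Summing this in $t$ against $\varkappa^T$, and using that $\varkappa^T$ solves the free discrete wave equation \eqref{kappa} with terminal data $\varkappa^T_T=0$, $\varkappa^T_{T-1}=1$, together with the relation \eqref{Cheb_rel} to identify $\varkappa^T_t=\mathcal{T}_{T-t}$, should produce exactly the state-sum $\sum_{k=1}^{T}\overline{y_k(\lambda)}\,u^g_{k,\,T}$ on one side and the two boundary sums on the other. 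The zero initial conditions $u^g_{n,-1}=u^g_{n,0}=0$ kill the contributions at $t=-1,0$, and the terminal conditions on $\varkappa^T$ kill the unwanted contributions at $t=T$, leaving precisely the claimed identity.

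Having established \eqref{C_T_Krein} for $f^T=\left(W^T\right)^{-1}\overline{y^T(\lambda)}$, uniqueness is immediate: $C^T=\left(W^T\right)^*W^T$ with $W^T$ an isomorphism (Lemma \ref{teor_control}), hence $C^T$ is invertible on $\mathcal{F}^T$, so \eqref{C_T_Krein} has exactly one solution, which must be $f^T$.

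The main obstacle I anticipate is bookkeeping in the double summation-by-parts: getting the boundary terms at $n=0$ and at $t=0,T$ to land with the correct signs and the correct coefficients $\alpha$, $\beta$, $a_0=1$, and making sure the time-summation of the discrete d'Alembertian against $\varkappa^T$ telescopes cleanly given the mismatch of initial data (for $u^g$) versus terminal data (for $\varkappa^T$). A secondary subtlety is the complex-conjugation convention: since $y(\lambda)$ and $\varkappa^T(\lambda)$ are polynomials in $\lambda$ with real coefficients, $\overline{y^T(\lambda)}=y^T(\overline\lambda)$, and one must be consistent about whether $\lambda$ is real here or an arbitrary complex spectral parameter — the identity should hold for all $\lambda\in\mathbb{C}$, with the conjugation falling only on the explicitly conjugated vectors. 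Once the Green's identity is set up correctly these are routine, so I would not belabor them.
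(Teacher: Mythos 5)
Your proposal is correct and follows essentially the same route as the paper: evaluate $\left(C^Tf^T,g\right)_{\mathcal{F}^T}=\left(W^Tf^T,W^Tg\right)_{\mathcal{H}^T}$, then perform the double summation by parts — in time against $\varkappa^T$ (the paper isolates this as the identity $u^g_{k,T}=\sum_{t=0}^{T-1}\bigl(u^g_{k,t+1}+u^g_{k,t-1}-\lambda u^g_{k,t}\bigr)\varkappa^T_t$, using the terminal data of $\varkappa^T$ and the zero initial data of $u^g$) and in space using the equations for $u^g$ and $y$, so that only the $n=0$ boundary terms survive and yield $\beta g_t-\alpha\left(R^Tg\right)_t$, with uniqueness from the invertibility of $C^T$. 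The only bookkeeping item you did not name explicitly, the vanishing of the far-end spatial boundary terms (via finite propagation speed, $u^g_{T,t}=u^g_{T+1,t}=0$ for $t\leqslant T-1$), falls squarely within the routine checks you flagged.
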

\begin{proof}
Let $f^T$ be a solution to (\ref{Control_probl}). We observe that
for any fixed $g\in \mathcal{F}^T$ we have that
\begin{equation}
\label{Kr_1}
u^g_{k,\,T}=\sum_{t=0}^{T-1}\left(u^g_{k,\,t+1}+u^g_{k,\,t-1}-\lambda
u^g_{k,\,t}\right)\varkappa^T_t.
\end{equation}
Indeed, changing the order of a summation in the right hand side
of (\ref{Kr_1}) and counting $u_{k,\,-1}^g=u_{k,\,0}^g=0$  yields
\begin{eqnarray*}
\sum_{t=0}^{T-1}\left(u^g_{k,\,t+1}+u^g_{k,\,t-1}-\lambda
u^g_{k,\,t}\right)\varkappa^T_t=\sum_{t=0}^{T-1}\left(\varkappa^T_{t+1}+\varkappa^T_{t-1}-\lambda\varkappa_t
\right)u^g_{k,\,t}+u^g_{k,\,T}\varkappa^T_{T-1},
\end{eqnarray*}
which gives (\ref{Kr_1}) due to (\ref{kappa}). Using this
observation, we can evaluate
\begin{eqnarray*}
\left(C^Tf^T,g\right)_{\mathcal{F}^T}=\left(u^f(T),u^g(T)\right)_{\mathcal{\mathcal{F}}^T}=\sum_{k=1}^T
u^f_{k,\,T}\overline{u^g_{k\,T}}\\
=\sum_{k=1}^T
\overline{y_k(\lambda)}\overline{u^g_{k,\,T}}=\sum_{k=1}^T
\overline{y_k(\lambda)}\overline{\sum_{t=0}^{T-1}\left(u^g_{k,\,t+1}+u^g_{k,\,t-1}-\lambda
u^g_{k,\,t}\right)\varkappa^T_t}\\
=\sum_{t=0}^{T-1}\overline{\varkappa^T_t(\lambda)}\left(\sum_{k=1}^T
\left(a_k\overline{u^g_{k+1,\,t}}\overline{y_k}+a_{k-1}\overline{u^g_{k-1,\,t}}\overline{y_k} +
b_k\overline{u^g_{k,\,t}}\overline{y_k}-\overline{\lambda} \overline{u^g_{k,\,t}}\overline{y_k}\right)\right)\\
=\sum_{t=0}^{T-1}\overline{\varkappa^T_t(\lambda)}\left(\sum_{k=1}^T
\left(\overline{u^g_{k,\,t}}(a_k\overline{y_{k+1}}+a_{k-1}\overline{y_{k-1}}+b_k\overline{y_k}-
\overline{\lambda y_k} \right)+\overline{u^g_{0,\,t}}\overline{y_1}\right.\\
\left.+a_T{u^g_{T+1,\,t}}\overline{y_T}-\overline{u^g_{1,\,t}}\overline{y_0}-
a_T\overline{u^g_{T,\,t}}\overline{y_{T+1}} \right)
=\sum_{t=0}^{T-1}\overline{\varkappa^T_t(\lambda)}\left(\beta
\overline{g_t}-\alpha\overline{\left(R^Tg\right)_t} \right)\\
=\left(\overline{\varkappa^T(\lambda)}, \left[\beta g-\alpha
\left(R^Tg\right)\right]\right)_{\mathcal{F}^T}=\left(\left[\beta\overline{\varkappa^T(\lambda)}
- \alpha
\left(\left(R^T\right)^*\overline{\varkappa^T(\lambda)}\right)\right],
g\right)_{\mathcal{F}^T}.
\end{eqnarray*}
Which completes the proof due to the arbitrariness of $g$.
\end{proof}



We consider two special solutions to (\ref{y_special}): the first
one $\varphi(\lambda)$ corresponds to the choice $\alpha=0,$
$\beta=1$, the second one, $\xi(\lambda)$, corresponds to Cauchy
data $\alpha=-1$, $\beta=0$.

It is well-known fact \cite{Ahiez,S} that the questions on the
uniqueness of the solution to a moment problem are related to the
index of the operator $A$. Here we provide well-known results on
discrete version of Weyl limit point-circle theory which answers
the question on the index of $A$ that will be subsequently used:
\begin{proposition}
\label{Hamburher_crt} The Jacobi operator $A$ is limit circle at
infinity (has index equal to one) if and only if one of the
following occurs:
\begin{itemize}


\item[1)] $\varphi(x),\xi(x)\in l^2$ for some $x\in \mathbb{R}$,


\item[2)] $\varphi(x),\varphi'(x)\in l^2$ for some $x\in
\mathbb{R}$,

\item[3)] $\xi(x),\xi'(x)\in l^2$ for some $x\in \mathbb{R}$.
\end{itemize}
\end{proposition}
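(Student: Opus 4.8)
The plan is to prove the proposition by establishing that each of the conditions (1)--(3) is equivalent to the assertion that for \emph{every} $\lambda\in\mathbb{C}$ all solutions of the difference equation $a_nu_{n+1}+a_{n-1}u_{n-1}+b_nu_n=\lambda u_n$ belong to $l^2$, which is the standard characterisation of the limit circle case (index one) for Jacobi operators, see \cite{Ahiez,S,A}. So I would prove the cycle $[\text{all solutions in }l^2\text{ for all }\lambda]\Leftrightarrow(1)\Leftrightarrow(2)\Leftrightarrow(3)$, the first equivalence resting on a reduction lemma and the remaining ones on a differentiation-in-$\lambda$ identity.

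First I would prove the reduction lemma: if for a single $\lambda_0\in\mathbb{C}$ every solution at $\lambda_0$ lies in $l^2$, then the same is true at every $\lambda\in\mathbb{C}$. The tool is discrete variation of parameters. The Wronskian $a_n\bigl(\varphi_{n+1}(\lambda_0)\xi_n(\lambda_0)-\varphi_n(\lambda_0)\xi_{n+1}(\lambda_0)\bigr)$ is the nonzero constant $-1$ (evaluate at $n=0$ using $a_0=1$), so every solution $u$ at $\lambda$ satisfies a Volterra-type identity $u_n=h_n+(\lambda_0-\lambda)\sum_{k=1}^{n-1}\bigl(\xi_n(\lambda_0)\varphi_k(\lambda_0)-\varphi_n(\lambda_0)\xi_k(\lambda_0)\bigr)u_k$ with $h$ a linear combination of $\varphi(\lambda_0)$ and $\xi(\lambda_0)$, hence $h\in l^2$. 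Applying the Cauchy--Schwarz inequality to the sums and using $\varphi(\lambda_0),\xi(\lambda_0)\in l^2$ gives $\sum_{n\le N}|u_n|^2\le C_1+C_2\sum_{n\le N}\bigl(|\varphi_n(\lambda_0)|^2+|\xi_n(\lambda_0)|^2\bigr)\sum_{k<n}|u_k|^2$, and the discrete Gronwall inequality bounds $\sum_{n\le N}|u_n|^2$ uniformly in $N$. With this lemma, $(1)\Leftrightarrow[\text{limit circle}]$ is immediate: $\varphi(x),\xi(x)$ are linearly independent (Wronskian $-1\ne0$), so $\varphi(x),\xi(x)\in l^2$ means that all solutions at the real point $x$ lie in $l^2$, which by the lemma propagates to all $\lambda\in\mathbb{C}$; conversely the limit circle case gives, at $\lambda=x$ in particular, $\varphi(x),\xi(x)\in l^2$.

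Next I would treat $(1)\Leftrightarrow(2)$ and $(1)\Leftrightarrow(3)$ by differentiating the recurrence in $\lambda$. Since $\varphi'(\lambda)$ solves the inhomogeneous equation with source $\varphi(\lambda)$ and zero Cauchy data, variation of parameters gives the closed formula $\varphi_n'(\lambda)=\pm\bigl(\xi_n(\lambda)\sum_{k=1}^{n-1}\varphi_k(\lambda)^2-\varphi_n(\lambda)\sum_{k=1}^{n-1}\varphi_k(\lambda)\xi_k(\lambda)\bigr)$ and, symmetrically, $\xi_n'(\lambda)=\pm\bigl(\xi_n(\lambda)\sum_{k=1}^{n-1}\varphi_k(\lambda)\xi_k(\lambda)-\varphi_n(\lambda)\sum_{k=1}^{n-1}\xi_k(\lambda)^2\bigr)$. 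If (1) holds, all the partial sums converge at $\lambda=x$, so $\varphi'(x),\xi'(x)\in l^2$, which is $(1)\Rightarrow(2)$ and $(1)\Rightarrow(3)$. Conversely, assuming (2), put $S=\sum_k\varphi_k(x)^2\ge\varphi_1(x)^2=1$; then for $n\ge2$ the factor $\sum_{k<n}\varphi_k(x)^2\ge1$, so one can solve the formula for $\xi_n(x)$, and estimating $\bigl(\sum_{k<n}\varphi_k(x)\xi_k(x)\bigr)^2\le S\sum_{k<n}\xi_k(x)^2$ yields $\sum_{n\le N}\xi_n(x)^2\le C_1+C_2\sum_{n\le N}\varphi_n(x)^2\sum_{k<n}\xi_k(x)^2$; the discrete Gronwall inequality (with $\sum\varphi_n(x)^2=S<\infty$) then makes $\sum_{n\le N}\xi_n(x)^2$ bounded in $N$, so $\xi(x)\in l^2$ and (1) holds. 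The implication $(3)\Rightarrow(1)$ is the mirror image, with $\sum_k\xi_k(x)^2\ge\xi_2(x)^2=a_1^{-2}>0$ as the seed. This closes the cycle and proves the proposition.

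The conceptual content is entirely classical Weyl limit point/limit circle theory transcribed to Jacobi matrices; the only real work is bookkeeping --- pinning down the variation-of-parameters constants and index ranges, and, above all, making the two Gronwall/Volterra estimates genuinely uniform in the truncation $N$, which is exactly where the $l^2$-hypotheses are used. Should a shorter treatment be preferred, this is the point at which one could simply invoke \cite{Ahiez,S,A} for the stated criterion.
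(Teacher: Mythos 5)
The paper does not actually prove this proposition: it is stated as a known piece of the discrete Weyl limit point/limit circle theory, with the reader referred to \cite{Ahiez,S}, so there is no in-paper argument for yours to coincide with. Your proposal differs simply by supplying a self-contained proof, and the one you sketch is the standard one and is sound. The key analytic ingredient is the $\lambda$-invariance of the property ``all solutions lie in $l^2$'', which you correctly obtain from discrete variation of parameters (the Wronskian $a_n(\varphi_{n+1}\xi_n-\varphi_n\xi_{n+1})\equiv -1$ with $a_0=1$) together with a discrete Gronwall estimate; the equivalences $(1)\Leftrightarrow(2)$ and $(1)\Leftrightarrow(3)$ then follow from the differentiated recurrence, which exhibits $\varphi_n'(x),\xi_n'(x)$ as combinations of $\varphi_n(x),\xi_n(x)$ with coefficients given by the partial sums $\sum\varphi_k^2$, $\sum\varphi_k\xi_k$, $\sum\xi_k^2$, and the converse directions $(2)\Rightarrow(1)$, $(3)\Rightarrow(1)$ correctly exploit the lower bounds $\sum_{k<n}\varphi_k(x)^2\geqslant\varphi_1(x)^2=1$ and $\sum_{k<n}\xi_k(x)^2\geqslant\xi_2(x)^2=a_1^{-2}$ (note $\xi_1=0$, so this seed appears only from $n\geqslant 3$ on, which affects finitely many terms) plus a second Gronwall argument. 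Two caveats, both minor: you take as background the von Neumann/Weyl identification of ``index one'' with ``all solutions in $l^2$ at some, hence every, non-real $\lambda$'' --- legitimate, and exactly the background the paper itself invokes wholesale --- and the signs and index ranges in the variation-of-parameters kernels remain to be pinned down, as you yourself acknowledge. What your route buys is self-containedness; what the paper's route buys is brevity, since the proposition is used there only as a ready-made tool for the uniqueness results of Section 4.
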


\subsection{Hamburger moment problem}
Let in (\ref{y_special}) $\alpha=0,$ $\beta=1$, then the special
control problem has a form:
\begin{equation}
\label{Contr1} W^Tf^T_{01}(\lambda)=\overline
y^T(\lambda)=\overline{\left(\varphi_1(\lambda),\ldots,\varphi_T(\lambda)\right)}.
\end{equation}
The control $f^T_{01}$ is a unique solution to (see
(\ref{C_T_Krein}) (\ref{Cheb_rel})) the equation
\begin{equation}
\label{Contr2}
C^Tf^T_{01}(\lambda)=\overline{\begin{pmatrix} \mathcal{T}_T(\lambda)\\
\mathcal{T}_{T-1}(\lambda)\\\ldots\\
\mathcal{T}_1(\lambda)\end{pmatrix}}.
\end{equation}
Differentiating (\ref{Contr1}), (\ref{Contr2}) with respect to
$\lambda$, we see that
\begin{equation*}
W^T\left(f^T_{01}(\lambda)\right)'=\overline
{\left(y^T\right)'(\lambda)}=\overline{\left(\varphi_1'(\lambda),\ldots,\varphi_T'(\lambda)\right)},
\end{equation*}
and the control $\left(f^T_{01}(\lambda)\right)'$ is a solution to
\begin{equation*}
C^T\left(f^T(\lambda)\right)'=\overline{\begin{pmatrix} \mathcal{T}_T'(\lambda)\\
\mathcal{T}_{T-1}'(\lambda)\\\ldots\\
\mathcal{T}_1'(\lambda)\end{pmatrix}}.
\end{equation*}
Evaluating the quadratic form (\ref{C_T_def}) we have that
\begin{equation*}
\left(C^T
f^T(\lambda),f^T(\lambda)\right)_{\mathcal{F}^T}=
\left(\overline{\left(\varphi_1(\lambda),\ldots,\varphi_T(\lambda)\right)},\overline{\left(\varphi_1(\lambda),\ldots,\varphi_T(\lambda)\right)}\right)_{\mathcal{H}^T}=\sum_1^T
|\varphi_k(\lambda)|^2.
\end{equation*}
And similarly for the derivatives:
\begin{equation*}
\left(C^T
\left(f^T(\lambda)\right)',\left(f^T(\lambda)\right)'\right)_{\mathcal{F}^T}=\sum_1^T
|\varphi_k'(\lambda)|^2.
\end{equation*}
It is known that
\begin{eqnarray}
\mathcal{T}_{2n-1}(0)=(-1)^{n-1},\quad\mathcal{ T}_{2n}(0)=0,\quad n\geqslant 1,\label{T1}\\
\mathcal{T}_{2n-1}'(0)=0  ,\quad
\mathcal{T}_{2n}'(0)=(-1)^{n-1}n,\quad n\geqslant 1.\label{T2}
\end{eqnarray}
We define the vectors
\begin{equation}
\label{GaOm_def}
\Gamma_T:=\begin{pmatrix} \mathcal{T}_T(0)\\
\mathcal{T}_{T-1}(0)\\\ldots\\
\mathcal{T}_1(0)\end{pmatrix},\quad \Omega_T=\begin{pmatrix} \mathcal{T}_T'(0)\\
\mathcal{T}_{T-1}'(0)\\\ldots\\
\mathcal{T}_1'(0)\end{pmatrix}.
\end{equation}
Using the above arguments we can state that
\begin{eqnarray}
\sum_1^T |\varphi_k(0)|^2=\left(\left(C^T\right)^{-1}\Gamma_T,\Gamma_T\right)_{\mathcal{F}^T},\label{Sum_phi}\\
\sum_1^T
|\varphi_k'(0)|^2=\left(\left(C^T\right)^{-1}\Delta_T,\Delta_T\right)_{\mathcal{F}^T}\notag.
\end{eqnarray}
Now we can use 2) from Proposition \ref{Hamburher_crt}, and
formulate the following
\begin{proposition}
The Hamburger moment problem is indeterminate if and only if
\begin{equation*}
\lim_{T\to\infty}\left(\left(C^T\right)^{-1}\Gamma_T,\Gamma_T\right)_{\mathcal{F}^T}<+\infty,\quad
\lim_{T\to\infty}\left(\left(C^T\right)^{-1}\Delta_T,\Delta_T\right)_{\mathcal{F}^T}<+\infty,
\end{equation*}
where $\Gamma_T$ and $\Omega_T$ are defined by (\ref{GaOm_def}),
(\ref{T1}), (\ref{T2}).
\end{proposition}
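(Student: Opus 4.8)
The plan is to combine the criterion in Proposition~\ref{Hamburher_crt}, item~2), with the two quadratic-form identities derived just above the statement. The logic runs as follows: the Hamburger moment problem is indeterminate precisely when $A$ is in the limit circle case at infinity, and by item~2) of Proposition~\ref{Hamburher_crt} this happens if and only if $\varphi(x),\varphi'(x)\in l^2$ for some real $x$. Choosing $x=0$ is legitimate since the limit-circle property, once it holds at one real point, holds at every real point; here $0$ is convenient because of the closed-form values (\ref{T1})--(\ref{T2}) of the Chebyshev polynomials and their derivatives at $0$, which is exactly what makes $\Gamma_T$ and $\Omega_T$ explicit. (I note that the displayed statement writes $\Delta_T$ where $\Omega_T$ was defined in (\ref{GaOm_def}); I would unify the notation to $\Omega_T$ throughout, or equivalently set $\Delta_T:=\Omega_T$.)

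\textbf{First} I would record that $\varphi(0)\in l^2$ means exactly $\sum_{k=1}^\infty|\varphi_k(0)|^2<\infty$, and likewise $\varphi'(0)\in l^2$ means $\sum_{k=1}^\infty|\varphi_k'(0)|^2<\infty$. \textbf{Next}, invoke the two identities (\ref{Sum_phi}):
\begin{equation*}
\sum_{k=1}^T|\varphi_k(0)|^2=\left(\left(C^T\right)^{-1}\Gamma_T,\Gamma_T\right)_{\mathcal{F}^T},\qquad
\sum_{k=1}^T|\varphi_k'(0)|^2=\left(\left(C^T\right)^{-1}\Omega_T,\Omega_T\right)_{\mathcal{F}^T}.
\end{equation*}
Since each summand on the left is nonnegative, both $T\mapsto\sum_{k=1}^T|\varphi_k(0)|^2$ and $T\mapsto\sum_{k=1}^T|\varphi_k'(0)|^2$ are nondecreasing, hence the limits as $T\to\infty$ exist in $[0,+\infty]$ and are finite if and only if the corresponding $l^2$-series converge. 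Transporting this through the identities, $\lim_{T\to\infty}\left(\left(C^T\right)^{-1}\Gamma_T,\Gamma_T\right)_{\mathcal{F}^T}<+\infty$ is equivalent to $\varphi(0)\in l^2$, and similarly for $\Omega_T$ and $\varphi'(0)$. Combining both with the biconditional in item~2) of Proposition~\ref{Hamburher_crt} and with ``indeterminate $\iff$ limit circle'' yields the claim.

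\textbf{The one point that needs care} is the direction of the equivalence in Proposition~\ref{Hamburher_crt}~2): it asserts limit circle $\iff$ ($\varphi(x),\varphi'(x)\in l^2$ for \emph{some} $x$), so for the ``only if'' part of our proposition one must know that whenever $A$ is limit circle, the specific point $x=0$ also works — i.e. that in the limit-circle case \emph{every} real $x$ gives $\varphi(x),\varphi'(x)\in l^2$. This is the standard fact that the limit-point/limit-circle alternative is $x$-independent for real spectral parameter (indeed for all $\lambda\in\mathbb{C}$), which I would cite from \cite{Ahiez,S} alongside Proposition~\ref{Hamburher_crt}. Given that, there is no real obstacle: the proof is a two-line assembly of the monotone-convergence observation with the already-established identities (\ref{Sum_phi}) and the Weyl-type criterion. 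The only routine check left is that $\Gamma_T,\Omega_T$ really are the vectors whose entries are $\mathcal{T}_{T-j+1}(0)$ and $\mathcal{T}_{T-j+1}'(0)$ with the explicit values (\ref{T1})--(\ref{T2}) substituted, which is immediate from (\ref{GaOm_def}).
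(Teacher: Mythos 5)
Your proposal is correct and follows essentially the same route as the paper, which states this proposition as an immediate consequence of the identities (\ref{Sum_phi}) (obtained from the Krein-type equations and the quadratic form of $C^T$) together with item 2) of Proposition \ref{Hamburher_crt} at $x=0$. Your explicit remarks on the $x$-independence of the limit-circle property and on the $\Delta_T$/$\Omega_T$ notation are points the paper leaves implicit (the latter being a typo there), but they do not change the argument.
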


\subsection{Stieltjes moment problem}

It is known \cite{Ahiez} that the Jacobi matrix in this case
admits the special structure:
\begin{eqnarray}
b_i=\frac{1}{m_i}\left(\frac{1}{l_{i-1}}+\frac{1}{l_i}\right),\quad i=2,3,\ldots,\quad b_1=\frac{1}{m_1 l_1},\label{B_coeff}\\
a_i=\frac{1}{l_{i}\sqrt{m_i m_{i+1}}},\quad i=1,2,\ldots\notag
\end{eqnarray}
where $l_i$, $m_i$ are positive and are interpreted as lengths of
intervals and masses at the points $x_j$. The string is defined by
the density $dM=\sum_{k=1}^\infty m_k\delta(x-x_k)$, where
$0=x_0<x_1<x_2<\ldots<x_{N-1}<\ldots$, $l_i=x_i-x_{i-1}$,
$i=1,\ldots$. The inverse dynamic problem for the dynamical system
corresponding to a finite Krein-Stieltjes string was studied in
\cite{MM4}. It is straightforward to check (see also \cite{Ahiez})
that the following relations hold:
\begin{eqnarray}
\varphi_n(0)=(-1)^n\sqrt{m_{n}},\quad n=1,2,\ldots,\label{ML1}\\
\xi_n(0)=(-1)^{n-1}\left(\sum_{j=1}^{n-1}
l_j\right)\sqrt{m_{n}}\quad n=1,2,\ldots.\label{ML2}
\end{eqnarray}
We define the mass and length of a segment of a string:
\begin{equation*}
M_K=\sum_{k=1}^K m_k;\quad L_K=\sum_{k=1}^K l_k,
\end{equation*}
when $K=\infty$ above expressions correspond to the mass and the
length of the whole string. Then formulas (\ref{ML1}), (\ref{ML2})
imply that
\begin{equation*}
M_K=\sum_{n=1}^K\varphi_n^2(0), \quad L_K=
-\frac{\xi_{K+1}(0)}{\varphi_{K+1}(0)}.
\end{equation*}
The immediate consequence of 1) in Proposition \ref{Hamburher_crt}
and formulas (\ref{ML1}), (\ref{ML2}) is the following
\begin{proposition}
The Stieltjes moment problem is indeterminate if and only if both
length and mass of a string is finite:
$M_\infty,\,L_\infty<+\infty$.
\end{proposition}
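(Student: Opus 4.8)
The plan is to reduce the statement to the classification of the Jacobi operator in the limit point / limit circle dichotomy, for which Proposition~\ref{Hamburher_crt} is designed, and then to read off that classification directly from the string data $\{l_i\},\{m_i\}$ by evaluating the two special solutions at $\lambda=0$, where the explicit formulas (\ref{ML1}), (\ref{ML2}) are available. The one point that must be handled with care is that, for the Stieltjes problem, indeterminacy is a \emph{two-sided} condition: besides the operator $A$ attached to the moments $\{s_k\}$, one must also use the operator attached to the shifted sequence $\{s_{k+1}\}$, which in the representation (\ref{B_coeff}) is the ``dual'' string obtained by interchanging the lengths $l_i$ and the masses $m_i$. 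The Stieltjes problem is indeterminate precisely when both of these operators are limit circle at infinity.

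I would first treat $A$ itself. By (\ref{ML1}), $\sum_{n\ge1}\varphi_n(0)^2=\sum_{n\ge1}m_n=M_\infty$, so $\varphi(0)\in l^2\iff M_\infty<\infty$; by (\ref{ML2}), $\sum_{n\ge1}\xi_n(0)^2=\sum_{n\ge1}L_{n-1}^2m_n$. Hence, by part 1) of Proposition~\ref{Hamburher_crt} applied at $x=0$, $A$ is limit circle iff $M_\infty<\infty$ and $\sum_n L_{n-1}^2m_n<\infty$. Running the same computation for the dual string shows that the shifted operator is limit circle iff $L_\infty<\infty$ and $\sum_n M_{n-1}^2l_n<\infty$. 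Now the equivalence follows: if $M_\infty<\infty$ and $L_\infty<\infty$, the trivial bounds $L_{n-1}\le L_\infty$, $M_{n-1}\le M_\infty$ give $\sum_n L_{n-1}^2m_n\le L_\infty^2M_\infty<\infty$ and $\sum_n M_{n-1}^2l_n\le M_\infty^2L_\infty<\infty$, so both operators are limit circle and the Stieltjes problem is indeterminate; conversely, if the Stieltjes problem is indeterminate, then from the $\varphi$-solution of $A$ we get $M_\infty=\sum_n m_n<\infty$, and from the $\varphi$-solution of the dual string we get $L_\infty=\sum_n l_n<\infty$.

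The main obstacle is precisely the two-sidedness invoked above: working only with $A$ (equivalently, using only (\ref{ML1}), (\ref{ML2}) as stated) is not enough, because a Stieltjes moment problem can be determinate while its underlying Hamburger problem is indeterminate. Concretely, taking $m_n=n^{-2}$ and $L_n=n^{1/3}$ one has $\varphi(0),\xi(0)\in l^2$, hence $A$ limit circle, yet $L_\infty=\infty$; so the implication $\sum_n L_{n-1}^2 m_n<\infty\Rightarrow L_\infty<\infty$ is false in general, and the relation $L_K=-\xi_{K+1}(0)/\varphi_{K+1}(0)$ does not rescue it, since $l^2$-summability of the two solutions does not force the ratios $\xi_{K+1}(0)/\varphi_{K+1}(0)$ to be bounded. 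Bringing in the dual string (i.e.\ the operator of the shifted moments) removes this difficulty, after which the remaining argument is purely a matter of the elementary monotonicity estimates written above.
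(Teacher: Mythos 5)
Your diagnosis of the weak point in the one-operator argument is accurate, and it is worth saying that the paper's own justification is exactly the route you criticize: it applies part 1) of Proposition \ref{Hamburher_crt} at $x=0$ together with (\ref{ML1}), (\ref{ML2}) and the identities $M_K=\sum_{n\leqslant K}\varphi_n^2(0)$, $L_K=-\xi_{K+1}(0)/\varphi_{K+1}(0)$, and declares the proposition an immediate consequence, with no further detail. As you observe, limit circle of $A$ characterizes Hamburger, not Stieltjes, indeterminacy, and $\varphi(0),\xi(0)\in \ell^2$ does not force the ratios $-\xi_{K+1}(0)/\varphi_{K+1}(0)=L_K$ to remain bounded; your example $m_n=n^{-2}$, $L_n=n^{1/3}$ is a correct counterexample to that implication. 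So your two-operator route (bringing in the operator of the shifted moments $\{s_{k+1}\}$, i.e.\ the dual string with $l_i$ and $m_i$ interchanged) is genuinely different from what the paper does.

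As a proof, however, your proposal has a gap of its own. The pivotal claim that the Stieltjes problem is indeterminate precisely when both the operator of $\{s_k\}$ and that of $\{s_{k+1}\}$ are limit circle is neither proved by you nor available anywhere in the paper, and after your own translation (limit circle of $A$ iff $M_\infty<\infty$ and $\sum_n L_{n-1}^2 m_n<\infty$; limit circle of the dual iff $L_\infty<\infty$ and $\sum_n M_{n-1}^2 l_n<\infty$) it is exactly equivalent to the proposition being proved. The direction ``Stieltjes indeterminate $\Rightarrow$ both Hamburger problems indeterminate'' is easy (two distinct representing measures on $[0,\infty)$ with moments $s_k$ remain distinct after multiplication by $\lambda$), so your argument for the necessity of $M_\infty,L_\infty<+\infty$ is sound modulo the duality; but the converse implication, $M_\infty,L_\infty<+\infty\Rightarrow$ Stieltjes indeterminacy, is carried entirely by the unproved hard half of your criterion (in operator language it amounts to Krein's comparison of the Friedrichs and Krein extensions, or to Stieltjes' continued-fraction theorem), and the identification of the $\{s_{k+1}\}$-operator with the $l\leftrightarrow m$-swapped string in the representation (\ref{B_coeff}) is likewise asserted without verification. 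In short, you correctly expose the thinness of the paper's one-line argument, but your replacement relocates, rather than closes, the essential difficulty.
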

For the mass of the segment of a string or of the whole string we
have an expression (\ref{Sum_phi}). Now we obtain similar formula
for the length. Denote by $h^T\in \mathcal{F}^T$ the (unique)
control which drives the system (\ref{Jacobi_dyn}) to the special
state
\begin{equation*}
W^Th^T=(0,\ldots,0,1)\in \mathcal{H}^T.
\end{equation*}
For arbitrary $q\in \mathcal{F}^T$ we have (see the representation
(\ref{WT})) that
\begin{equation*}
\left(C^Th^T,q\right)_{\mathcal{F}^T}=\left(W^Th^T,W^Tq\right)_{\mathcal{H}^T}=\prod_{i=0}^{T-1}
a_iq_0.
\end{equation*}
The above relation implies that $h^T$ can be found as a unique
solution to Krein-type equation:
\begin{equation}
\label{CT_h}
C^Th^T=\begin{pmatrix} \prod_{i=0}^{T-1}a_i \\0\\
\ldots\\0\end{pmatrix}.
\end{equation}
Taking a control $f^T\in \mathcal{F}^T$ such that
$W^Tf^T=\left(\varphi_1(0),\ldots,\varphi_T(0)\right)$ we have
(see Theorem \ref{Theor_Krein} and (\ref{CT_h}) that
\begin{eqnarray*}
\varphi_T(0)=\left(\begin{pmatrix}0\\0\\\cdot\\1\end{pmatrix},\begin{pmatrix}\varphi_1(0)\\\varphi_2(0)\\\cdot\\
\varphi_T(0)\end{pmatrix}\right)_{\mathcal{H}^T}
=\left(W^Th^T,W^Tf^T\right)_{\mathcal{H}^T}\\
=\left(C^Th^T,f^T\right)_{\mathcal{F}^T}=
\left(\begin{pmatrix} \prod_{i=0}^{T-1}a_i \\0\\
\ldots\\0\end{pmatrix},\left(C^T\right)^{-1}{\begin{pmatrix} \mathcal{T}_T(0)\\
\mathcal{T}_{T-1}(0)\\\ldots\\
\mathcal{T}_1(0)\end{pmatrix}}\right)_{\mathcal{F}^T}.
\end{eqnarray*}
Similarly, denote by $g^T$ the control for which
$W^Tg^T=\left(\xi_1(0),\ldots,\xi_T(0)\right)$. Then, using
equations from Theorem \ref{Theor_Krein} and (\ref{CT_h}) we have
that
\begin{eqnarray*}
\xi_T(0)=\left(\begin{pmatrix}0\\0\\\cdot\\1\end{pmatrix},\begin{pmatrix}\xi_1(0)\\\xi_2(0)\\\cdot\\
\xi_T(0)\end{pmatrix}\right)_{\mathcal{H}^T}=\left(W^Th^T,W^Tg^T\right)_{\mathcal{H}^T}\\
=\left(C^Th^T,g^T\right)_{\mathcal{F}^T}=-\left(\begin{pmatrix} \prod_{i=0}^{T-1}a_i \\0\\
\ldots\\0\end{pmatrix},\left(C^T\right)^{-1}\left(R^T\right)^* {\begin{pmatrix}\mathcal{T}_T(0)\\
\mathcal{T}_{T-1}(0)\\\ldots\\
\mathcal{T}_1(0)\end{pmatrix}}\right)_{\mathcal{F}^T}.
\end{eqnarray*}
The above arguments leads to the following expression for the
length of the segment of the string:
\begin{equation*}
L_K=-\frac{\xi_{K+1}(0)}{\varphi_{K+1}(0)}=\frac{\left(\left(C^{K+1}\right)^{-1}\left(R^{K+1}\right)^*\Gamma_{K+1},e_1\right)}
{\left(\left(C^{K+1}\right)^{-1}\Gamma_{K+1},e_1\right)},
\end{equation*}
where we denoted $e_1=(1,0,\ldots,0)\in \mathcal{F}^T$. The above
arguments leads to the following statement:
\begin{proposition}
\label{PropStieltjes} The Stieltjes moment problem is
indeterminate if and only if the following relations hold:
\begin{eqnarray*}
M_\infty=\lim_{T\to\infty}\left(\left(C^T\right)^{-1}\Gamma_T,\Gamma_T\right)_{\mathcal{F}^T}<+\infty,\\
L_\infty=\lim_{K\to\infty}\frac{\left(\left(C^{K}\right)^{-1}\left(R^{K}\right)^*\Gamma_{K},e_1\right)}
{\left(\left(C^{K}\right)^{-1}\Gamma_{K},e_1\right)}<+\infty.
\end{eqnarray*}
\end{proposition}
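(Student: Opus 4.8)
The plan is to obtain the statement by assembling three ingredients that are already at hand: the characterization proved immediately above (the Stieltjes moment problem is indeterminate if and only if the total mass $M_\infty$ and the total length $L_\infty$ of the associated Krein--Stieltjes string are both finite), the mass formula (\ref{Sum_phi}), and the length formula derived just before the statement. The remaining work is then only to rewrite $M_\infty$ and $L_\infty$ as the two displayed limits and to check that these limits genuinely exist.

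First I would handle the mass. By (\ref{ML1}) the mass of the first $K$ beads is $M_K=\sum_{n=1}^K\varphi_n^2(0)$. The special control problem (\ref{Contr1}) at $\lambda=0$ is solved by $f^T_{01}(0)=(C^T)^{-1}\Gamma_T$ (equation (\ref{Contr2}) together with (\ref{Cheb_rel}), noting that $\Gamma_T$ has real entries), so evaluating the quadratic form (\ref{C_T_def}) yields exactly (\ref{Sum_phi}), namely $\sum_{k=1}^T|\varphi_k(0)|^2=((C^T)^{-1}\Gamma_T,\Gamma_T)_{\mathcal{F}^T}$. Since the finite partial sums increase in $T$, the sequence converges in $[0,+\infty]$ to $M_\infty$, and therefore $M_\infty<+\infty$ is equivalent to finiteness of $\lim_{T\to\infty}((C^T)^{-1}\Gamma_T,\Gamma_T)_{\mathcal{F}^T}$.

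Next I would handle the length. By (\ref{ML2}) one has $L_K=-\xi_{K+1}(0)/\varphi_{K+1}(0)$, and the denominator never vanishes because $\varphi_{K+1}(0)=(-1)^{K+1}\sqrt{m_{K+1}}$ with $m_{K+1}>0$. Introducing the auxiliary control $h^T$ with $W^Th^T=(0,\ldots,0,1)$, which by (\ref{WT}) solves (\ref{CT_h}), and comparing $(C^Th^T,q)_{\mathcal{F}^T}$ with $(W^Th^T,W^Tq)_{\mathcal{H}^T}$ for the controls $q$ steering to $\overline{\varphi^T(0)}$ and to $\overline{\xi^T(0)}$, one gets (as displayed above the statement)
\[
\varphi_T(0)=\Bigl(\prod_{i=0}^{T-1}a_i\Bigr)\bigl((C^T)^{-1}\Gamma_T,e_1\bigr),\qquad
\xi_T(0)=-\Bigl(\prod_{i=0}^{T-1}a_i\Bigr)\bigl((C^T)^{-1}(R^T)^*\Gamma_T,e_1\bigr).
\]
Dividing, the positive factor $\prod_{i=0}^{T-1}a_i$ cancels, so $L_K$ equals the quotient in the statement with $K$ replaced by $K+1$; since $L_K=\sum_{k=1}^K l_k$ is increasing in $K$ with all $l_k>0$, it converges in $(0,+\infty]$ to $L_\infty$, the index shift is immaterial under the limit, and $L_\infty<+\infty$ is equivalent to finiteness of $\lim_{K\to\infty}((C^K)^{-1}(R^K)^*\Gamma_K,e_1)/((C^K)^{-1}\Gamma_K,e_1)$. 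Combining these two observations with the preceding proposition gives the asserted equivalence. The main obstacle is really only the bookkeeping in the length computation — handling the $(R^T)^*$ term of the Krein equation (\ref{C_T_Krein}) correctly and exploiting the lower-triangular shape (\ref{WT}) of $W^T$ so that only the first coordinate of $(C^T)^{-1}(\cdot)$ survives — together with the elementary but necessary remark that both sequences $M_K$, $L_K$ are monotone (hence their limits exist in the extended reals) and that the denominators stay away from zero; everything else is routine.
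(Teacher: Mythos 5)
Your argument is correct and follows the paper's own route: it assembles the preceding proposition (indeterminacy iff $M_\infty,L_\infty<+\infty$, via the limit-circle criterion and formulas (\ref{ML1}), (\ref{ML2})) with the mass identity (\ref{Sum_phi}) and the length formula obtained from the auxiliary control $h^T$, the Krein equations for $\varphi(0)$ and $\xi(0)$, and the triangular form (\ref{WT}). Your explicit remarks on monotonicity of $M_K$, $L_K$, the nonvanishing of $\varphi_{K+1}(0)$, and the harmless index shift $K\mapsto K+1$ only make explicit what the paper leaves tacit.
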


\subsection{Hausdorff moment problem}

It is a special case of a Stieltjes moment problem. The necessary
and sufficient conditions for a set of numbers to be a moments of
a measure supported on $(0,1)$ are obtained in \cite{H}, see also
\cite{T}. They are equivalent to inequality (\ref{Hausd_inq})
holds for all $T\in \mathbb{N}$ since we get the limit measure as
a limit of measures supported on $(0,1)$.
\begin{proposition}
If the Halkel matrices $S_0^T$, $S_1^T$ satisfy (\ref{Hausd_inq})
for all $T\geqslant 2$, then there exists only one measure
supported on $(0,1)$ which satisfies (\ref{Moment_eq}). In other
words, the Hausdorff moment problem is determinate.
\end{proposition}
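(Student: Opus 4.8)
The plan is to derive determinacy from boundedness of the Jacobi operator reconstructed from the moments, reading that boundedness off directly from hypothesis (\ref{Hausd_inq}). The classical fact lurking underneath is that a measure carried by a compact interval is uniquely determined by its moments, since polynomials are dense in $C([0,1])$; but I want the argument phrased through the matrices $S^T_0,S^T_1$ and the operator $A$ so that it matches the treatment of the Hamburger and Stieltjes cases in this section, and so that it yields the stronger conclusion that the \emph{full} moment problem (\ref{Moment_eq}) has a unique solution.

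\emph{Step 1: the reconstructed coefficients are bounded.} Suppose (\ref{Hausd_inq}) holds for all $T\ge 2$. Exactly as in the proof of the preceding proposition, $S^N_0\ge S^N_1>0$ (so in particular $S^N_0>0$, and the Rayleigh quotient is well defined) forces every eigenvalue of the generalized problem (\ref{m_eqn2}) to lie in $(0,1]$; by Theorems \ref{teor} and \ref{Propos_EL} these are precisely the eigenvalues of the finite Jacobi matrix $A^N$, hence $\|A^N\|\le 1$ for every $N$. Since the matrices $A^N$ are nested principal blocks of a single semi-infinite Jacobi matrix $A$ built from the moment sequence --- this is where Remark \ref{Rem1}, i.e.\ the consistency of the truncated response vectors with the response vector of (\ref{Jacobi_dyn}), is used --- and since each entry of a symmetric matrix is bounded in modulus by its operator norm, we get $|a_k|\le 1$ and $|b_k|\le 1$ for all $k$. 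Thus $A$ extends by continuity to a bounded self-adjoint operator on $l^2$, with $\|A\|\le 1$.

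\emph{Step 2: from boundedness to determinacy.} A bounded symmetric Jacobi matrix is essentially self-adjoint, hence in the limit point case, so none of the three alternatives of Proposition \ref{Hamburher_crt} can hold and $A$ has a unique spectral measure $d\rho$ (the one associated with the cyclic vector $e_1$). On the other hand, any solution $d\nu$ of (\ref{Moment_eq}) yields, upon Gram--Schmidt orthogonalization of the monomials $1,\lambda,\lambda^2,\dots$ in $L^2(d\nu)$, the very same recurrence coefficients $a_k,b_k$, and is therefore a spectral measure of this same $A$; by uniqueness of the latter, the solution of (\ref{Moment_eq}) is unique. Consequently the Hamburger problem --- a fortiori the Stieltjes and the Hausdorff problems --- is determinate, and since existence of a solution supported on $(0,1)$ was already obtained as a weak-$*$ limit of the truncated solutions, precisely one such measure exists. (Alternatively, entirely within the Krein--Stieltjes string picture: were the Hausdorff problem indeterminate it would be indeterminate as a Stieltjes problem, so by Proposition \ref{PropStieltjes} both $M_\infty=\sum_k m_k$ and $L_\infty=\sum_k l_k$ would be finite; but $\|A\|\le 1$ and (\ref{B_coeff}) give $a_i=(l_i\sqrt{m_i m_{i+1}})^{-1}\le 1$, and $\sum_k m_k<\infty$ forces $m_k\to 0$, whence $l_i\to\infty$, contradicting $\sum_k l_k<\infty$.)

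The only step that needs care is the first: one must make precise that the $N\times N$ matrices occurring in the generalized spectral problems for different values of $N$ are genuine nested blocks of one and the same operator $A$, so that the single uniform bound $|a_k|,|b_k|\le 1$ is legitimate --- this rests on Remark \ref{Rem1} together with the normalization $a_0=1$. Everything after that is a citation of the classical implication ``limit point $\Rightarrow$ determinate'', so I expect no further obstruction.
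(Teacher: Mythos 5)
Your argument is correct, but your main route differs from the paper's. You read off from (\ref{Hausd_inq}) and the Rayleigh quotient for (\ref{m_eqn2}) that all eigenvalues of every $A^N$ lie in $(0,1]$, hence $\|A^N\|\leqslant 1$, and then (using the nestedness of the blocks, i.e.\ Remark \ref{Rem1} and the fact that the reconstructed $a_k,b_k$ do not depend on $N$) conclude that the semi-infinite $A$ is a bounded self-adjoint operator, so it is in the limit point case and the moment problem is determinate already as a Hamburger problem --- a stronger conclusion than the statement asks for, obtained by invoking the classical ``bounded $\Rightarrow$ essentially self-adjoint $\Rightarrow$ determinate'' theorem rather than the machinery of the section. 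The paper instead argues by contradiction entirely inside the Stieltjes-string framework it has just set up: indeterminacy would give finite mass and length via Proposition \ref{PropStieltjes}, and then the trace identity together with the explicit diagonal entries $b_n=\frac{1}{m_n}(\frac{1}{l_{n-1}}+\frac{1}{l_n})$ from (\ref{B_coeff}) forces $\operatorname{tr}A^T$ to grow superlinearly (since $m_n,l_n\to 0$ makes $1/(m_nl_n)\to\infty$), contradicting that the $T$ eigenvalues of $A^T$ all lie in $(0,1)$. Your parenthetical variant --- $a_i=(l_i\sqrt{m_im_{i+1}})^{-1}\leqslant 1$ plus $m_k\to 0$ forcing $l_i\to\infty$ --- is essentially the paper's argument with the off-diagonal entries in place of the trace, and is if anything cleaner. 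What each buys: your main route is shorter, yields full Hamburger determinacy, and leans on standard spectral theory; the paper's stays self-contained within its dynamical/string apparatus and only uses its own Propositions. Two small points to tighten in your write-up: the phrase ``any solution $d\nu$ \ldots is therefore a spectral measure of this same $A$'' is not literally true solution-by-solution in general (non--von Neumann solutions exist in the indeterminate case), so you should phrase this step purely as the citation of ``limit point $\Rightarrow$ determinate''; and the Gram--Schmidt construction needs $d\nu$ to have infinite support, which here follows from $S^N_0>0$ for all $N$.
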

\begin{proof}
Let us assume that the opposite is true and the Hausdorff moment
problem is indeterminate, in this case by Proposition
\ref{PropStieltjes} the length and the mass of string determined
by the matrix $A$ constructed from moments, should be finite. Then
for any fixed $T\in \mathbb{N}$ we have on the one hand that
\begin{equation}
\label{TrA1} \operatorname{tr}A^T=\sum_{n=1}^T\lambda_n^T.
\end{equation}
On the other hand (see (\ref{B_coeff})),
\begin{equation}
\label{TrA2}
\operatorname{tr}A^T=\frac{1}{m_1l_1}+\sum_{n=2}^T\frac{1}{m_n}\left(\frac{1}{l_{n-1}}+\frac{1}{l_n}\right).
\end{equation}
Since $l_i,\,m_i$ are positive and by our assumption
$\sum_{i=1}^\infty m_i<+\infty,\,\, \sum_{i=1}^\infty l_i
<+\infty$, it immediately follows from (\ref{TrA2}) that
$\operatorname{tr}A^T>T^2$ for sufficiently large $T$, and thus
from (\ref{TrA1}) we can see that for such $T$ the eigenvalues
$\lambda^T_k$ cannot be bounded by one. Which gives a
contradiction.
\end{proof}

Let $s\in \mathbb{R}^\mathbb{N}$ be a sequence,
$s=\left(s_0,s_1,\ldots\right)$. One defines the difference
operator $\Delta: \mathbb{R}^\mathbb{N}\mapsto
\mathbb{R}^\mathbb{N}$ by the rule
\begin{equation}
\label{def_del}
(\Delta s)_n=s_{n+1}-s_n, \quad n=0,1,\dots.
\end{equation}
Hausdorff in \cite{H1,H2} proved the following
\begin{theorem}
A sequence $s=\left(s_0,s_1,\ldots\right)\in
\mathbb{R}^\mathbb{N}$ is a moment sequence of a measure supported
on $(0,1)$ if and only if it is completely monotonic, i.e., its
difference sequences satisfy the equalities
\begin{equation}
\label{H_cond}
(-1)^k(\Delta^k s)_n\geqslant 0,\quad \text{for all $k,n\geqslant
0$}.
\end{equation}
\end{theorem}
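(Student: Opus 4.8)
The plan is to connect the classical Hausdorff criterion of complete monotonicity with the inequality $S_0^N \geqslant S_1^N > 0$ established earlier, and then invoke the uniqueness result (the preceding proposition) to get the "if and only if" in full. More precisely, I would argue as follows. One direction is already essentially in hand: if $s$ is a moment sequence of a measure $d\rho$ supported on $(0,1)$, then for each $k,n$ one has $(-1)^k(\Delta^k s)_n = \int_0^1 \lambda^n (1-\lambda)^k\, d\rho(\lambda) \geqslant 0$, since the integrand is nonnegative on $(0,1)$; this is a one-line computation using the binomial expansion $(\Delta^k s)_n = \sum_{j=0}^k \binom{k}{j}(-1)^{k-j} s_{n+j}$ and linearity of the integral. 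So complete monotonicity is necessary.

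For the converse — a completely monotonic sequence is a Hausdorff moment sequence — the approach I would take is to show that complete monotonicity of $s$ forces the Hankel-type conditions $S_0^N \geqslant S_1^N > 0$ for all $N$, and then quote the existence results of Section 3 (the three propositions on Hamburger, Stieltjes, Hausdorff moment problems, in particular the one giving $S_0^N \geqslant S_1^N > 0$) together with a compactness/diagonal argument to extract a limiting measure on $[0,1]$. The key algebraic step is to express the quadratic forms $(S_0^N g,g)$ and $((S_0^N - S_1^N)g,g)$ in terms of the difference operator $\Delta$. Writing $g=(g_0,\ldots,g_{N-1})$ and setting $p(\lambda)=\sum_{i} g_i \lambda^i$, one has formally $(S_0^N g, g) = \int \lambda^{?}\,|p(\lambda)|^2\,d\mu$ for the putative measure, but since we do not yet have the measure, I would instead use the known identity expressing $(-1)^k(\Delta^k s)_n$ as linear combinations of the $s_m$ and recognize $S_0^N$, $S_0^N - S_1^N$, $S_1^N$ as Hankel matrices built from the sequences $s$, $\Delta s$-type transforms; complete monotonicity then says precisely that these auxiliary Hankel matrices are positive semidefinite, which is exactly (a non-strict version of) condition (\ref{Hausd_inq}). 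The passage from positive semidefinite to strictly positive definite (or handling the degenerate case where the measure is finitely supported) should be dealt with separately, either by noting that a degenerate case corresponds to a measure with finitely many atoms in $(0,1)$, which is still a genuine solution, or by a perturbation/limiting argument.

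Finally, once existence of a solution supported on $[0,1]$ is secured from the positivity conditions holding for all $N$, uniqueness follows from the Proposition just proved: if (\ref{Hausd_inq}) holds for all $T \geqslant 2$ then the Hausdorff problem is determinate. One should check that the limiting measure is supported on the open interval $(0,1)$ rather than the closed one, or simply accept support on $[0,1]$ as the standard formulation; the endpoints contribute atoms that are harmless. Thus the theorem is the combination of: (i) necessity by direct integration, (ii) sufficiency by translating complete monotonicity into the Hankel positivity conditions (\ref{Hausd_inq}) and applying the existence results of Section~3, and (iii) determinateness by the previous proposition.

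The main obstacle I anticipate is step (ii): cleanly identifying the matrices $S_0^N$ and $S_1^N$ (and the difference $S_0^N-S_1^N$) with Hankel matrices of the transformed sequences $(-1)^k(\Delta^k s)$ and verifying that complete monotonicity is equivalent to the positive (semi)definiteness of all of these simultaneously. This is a bookkeeping fact about Hankel forms under the substitution $\lambda \mapsto 1-\lambda$ and the operator $\Delta$, and while not deep, it requires care with indices and with the distinction between $N$ even and odd, and between strict and non-strict inequalities. The degeneracy issue (when some $S_0^N$ is only positive semidefinite, corresponding to a finitely-atomic solution) is the place where the argument is most likely to need an extra remark rather than a routine computation.
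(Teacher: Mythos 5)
First, a point of reference: the paper does not prove this theorem at all --- it is quoted as Hausdorff's classical result \cite{H1,H2}, and the surrounding text only compares the condition (\ref{H_cond}) with the Hankel condition (\ref{Hausd_inq}). So your proposal cannot be measured against an internal proof; it has to stand on its own as a proof of Hausdorff's theorem, and there it has a genuine gap. Your necessity direction is fine: $(-1)^k(\Delta^k s)_n=\int_0^1\lambda^n(1-\lambda)^k\,d\rho(\lambda)\geqslant 0$ is indeed a one-line computation. The problem is the converse, specifically your ``key algebraic step'' (ii). Complete monotonicity is a family of \emph{entrywise} sign conditions on the numbers $(-1)^k(\Delta^k s)_n$; the conditions (\ref{Hausd_inq}) are \emph{quadratic-form} (positive definiteness) conditions on Hankel matrices. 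Nonnegativity of the entries of a Hankel matrix does not imply positive semidefiniteness of the matrix, and there is no index-bookkeeping identity that converts one into the other. The real content of the hard direction of Hausdorff's theorem is exactly this passage: one defines the functional $L(\lambda^n)=s_n$, observes that complete monotonicity means $L\geqslant 0$ on the Bernstein basis $\lambda^j(1-\lambda)^{k}$, and then needs an approximation-theoretic ingredient (Bernstein/P\'olya-type positivity: every polynomial strictly positive on $[0,1]$ has nonnegative coefficients in a Bernstein basis of sufficiently high degree, plus an $\varepsilon$-perturbation to handle $p^2$) to conclude $L(p^2)\geqslant 0$, i.e.\ the Hankel positivity, or directly the existence of a representing measure via Riesz representation and a Helly-type selection. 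None of this is supplied in your sketch, and it cannot be dismissed as careful indexing: the paper's own counterexample (perturbing $s_2$ in the sequence $s_k=(1/2)^k$) shows that the finite-index version of the implication ``(\ref{H_cond}) $\Rightarrow$ (\ref{Hausd_inq})'' is simply false, so any correct argument must exploit the full infinite family of conditions in an essentially analytic way, not through a finite matrix identity.

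Two further, smaller defects. The existence propositions of Section~3 that you plan to quote require \emph{strict} positive definiteness $S^N_0, S^N_1>0$, which fails for perfectly good completely monotone sequences: e.g.\ $s_n=(1/2)^n$ comes from a one-point measure, so $S^N_0$ is rank one and singular for $N\geqslant 2$; your ``deal with degeneracy separately'' remark is precisely where a real argument (rank stabilization giving a finitely supported representing measure, or a limiting procedure with control of the support) is needed, and it interacts badly with the open-interval formulation, since completely monotone sequences such as $s_n\equiv 1$ (the measure $\delta_1$) or $s=(1,0,0,\ldots)$ (the measure $\delta_0$) are represented only by measures on the closed interval $[0,1]$. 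Finally, the determinateness step (iii) is not needed for the statement being proved (which asserts only that $s$ \emph{is} a moment sequence), and the proposition you invoke for it again presupposes the strict inequalities (\ref{Hausd_inq}) for all $T$, so it does not cover the degenerate cases either.
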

We will show that the Hausdorff condition (\ref{H_cond}) is a
consequence of condition (\ref{Hausd_inq}) holding far all
$N\in\mathbb{N}$. On the other hand, the property (\ref{H_cond})
for finite $k,n$ does not imply (\ref{Hausd_inq}). The following
two propositions confirm that.

\begin{proposition}
The condition  (\ref{Hausd_inq}) implies the inequality
(\ref{H_cond}) holds for $k,\,n$ such that $k+n\leqslant 2N-1$.
\end{proposition}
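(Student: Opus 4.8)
The plan is to read condition (\ref{Hausd_inq}) through the characterization established earlier in the ``Special cases'' subsection: for a given $N$, the inequality $S^N_0\geqslant S^N_1>0$ is equivalent to the statement that $s_0,s_1,\dots,s_{2N-1}$ are the moments of a Borel measure $d\rho^N$ supported on $(0,1)$, namely the spectral measure of the finite Jacobi matrix $A^N$ with the Dirichlet condition at $n=N+1$. So I would begin by invoking that proposition to produce such a measure $d\rho^N$, for which
\[
s_k=\int_0^1\lambda^k\,d\rho^N(\lambda),\qquad k=0,1,\dots,2N-1 .
\]

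The next step is the integral representation of the iterated differences (\ref{def_del}): for all $n,k\geqslant 0$ with $n+k\leqslant 2N-1$,
\[
(\Delta^k s)_n=\int_0^1\lambda^n(\lambda-1)^k\,d\rho^N(\lambda).
\]
This I would prove by induction on $k$ — the base case $k=0$ is the moment formula, and the inductive step uses $(\Delta s)_m=s_{m+1}-s_m=\int_0^1\lambda^m(\lambda-1)\,d\rho^N(\lambda)$ — or directly, by expanding $(\Delta^k s)_n=\sum_{j=0}^k\binom{k}{j}(-1)^{k-j}s_{n+j}$ and recombining under the integral sign as $\int_0^1\lambda^n\bigl(\sum_{j=0}^k\binom{k}{j}(-1)^{k-j}\lambda^j\bigr)d\rho^N(\lambda)=\int_0^1\lambda^n(\lambda-1)^k\,d\rho^N(\lambda)$. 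The point of the hypothesis $n+k\leqslant 2N-1$ is precisely that every moment $s_n,s_{n+1},\dots,s_{n+k}$ occurring in this expansion lies in the range $\{0,\dots,2N-1\}$ realized by $d\rho^N$, so the manipulation is legitimate.

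Finally, multiplying by $(-1)^k$ yields
\[
(-1)^k(\Delta^k s)_n=\int_0^1\lambda^n(1-\lambda)^k\,d\rho^N(\lambda)\geqslant 0 ,
\]
because on $(0,1)$ the integrand is nonnegative and $d\rho^N$ is a positive measure; this is exactly (\ref{H_cond}) for every pair $(k,n)$ with $k+n\leqslant 2N-1$, which is the claim.

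I do not expect any genuine analytic obstacle here: the only thing needing care is the bookkeeping of indices — ensuring that all the moments entering $(\Delta^k s)_n$ stay within the block $\{0,\dots,2N-1\}$ that is controlled by the reconstructed measure, which is what forces the bound $k+n\leqslant 2N-1$ — and the (harmless) observation that the measure furnished by the earlier characterization is genuinely supported in $[0,1]$, so that $\lambda^n(1-\lambda)^k\geqslant 0$ holds $d\rho^N$-almost everywhere.
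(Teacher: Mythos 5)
Your argument is correct, but it follows a genuinely different route from the paper's. You read (\ref{Hausd_inq}) through the preceding characterization proposition, obtaining an $N$-point representing measure $d\rho^N$ supported in $[0,1]$ that reproduces $s_0,\ldots,s_{2N-1}$, and then invoke the classical identity $(-1)^k(\Delta^k s)_n=\int_0^1\lambda^n(1-\lambda)^k\,d\rho^N(\lambda)\geqslant 0$, the restriction $k+n\leqslant 2N-1$ ensuring that only matched moments enter; this is exactly the easy direction of Hausdorff's theorem, short and transparent. The paper never passes to a representing measure: it works at the level of the matrices themselves, combining the expansion (\ref{dsn}) with the invariance of Hankel determinants under the (signed) binomial transform (\ref{hankel_prop}) to rewrite the determinants of $J_NS^N_mJ_N$ in terms of iterated differences, then applying the Sylvester criterion together with (\ref{Hausd_inq}) and its restrictions to the shifted blocks $S^{N-m}_{2m}\geqslant S^{N-m}_{2m+1}>0$, and finally reading the sign conditions off diagonal entries. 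What your approach buys is brevity and conceptual clarity; what it costs is that it leans on the full strength of the ``if'' direction of the preceding proposition --- in particular on the fact that the reconstructed $N$-atom spectral measure reproduces the top odd moment $s_{2N-1}$ as well (a Gauss-quadrature-type exactness statement), which the paper asserts with only a very brief proof --- whereas the paper's matrix argument is independent of that existence statement and stays within its stated aim of formulating everything directly in terms of $S^N_0$ and $S^N_1$. If you keep your route, say explicitly that you are invoking the existence direction of that proposition, including the matching of all $2N$ moments, since without the moment $s_{2N-1}$ your integral representation of $(\Delta^k s)_n$ only covers $k+n\leqslant 2N-2$.
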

\begin{proof}
According to definition (\ref{def_del})
$$
\left(\Delta^2 s\right)_n = \left(\Delta\left(\Delta
s\right)\right)_n = \left(\Delta s\right)_{n+1}-\left(\Delta
s\right)_{n}=s_{n+2}-2s_{n-1}+s_n,
$$
continuing calculations yields
\begin{equation}\label{dsn}
\left(\Delta^k s\right)_n=\sum_{i=0}^{k}(-1)^i C^i_k s_{n+k-i}.
\end{equation}

For a given sequence $g=\left(g_0,g_1,\ldots\right)\in
\mathbb{R}^\mathbb{N}$ by $\left(g_{i+j-2}\right)_{1\leq i,j\leq
n+1}$ we denote the Hankel matrix, and the Hankel transform get
map $g\in \mathbb{R}^\mathbb{N}$ to the sequence
$$
h_{n}=\det(g_{i+j-2})_{1\leq i,j\leq n+1},\quad n=0,1,\dots
$$
The  binomial transform get map a given sequence
$g=\left(g_0,g_1,\ldots\right)\in \mathbb{R}^\mathbb{N}$ to the
sequence
\begin{equation}\label{bin}
c_{n}=\sum _{k=0}^{n}C_n^k g_{k},\quad n=0,1,\dots
\end{equation}
It is known \cite{P} that the Hankel transform is invariant under
the binomial transform, that is:
\begin{equation}
\label{hankel_prop}
h_n =\det(g_{i+j-2})_{1\leq i,j\leq
n+1}=\det(c_{i+j-2})_{1\leq i,j\leq n+1}.
\end{equation}
Note that (\ref{hankel_prop}) remains valid if one replaces
binomial transform (\ref{bin}) by the signed binomial transform
introduced by the rule: for $g=\left(g_0,g_1,\ldots\right)\in
\mathbb{R}^\mathbb{N}$ one has
\begin{equation*}
\widetilde c_{n}=\sum _{k=0}^{n}(-1)^kC_n^k
g_{k},\quad n=0,1,\dots
\end{equation*}
This fact and (\ref{dsn}) imply that for Hankel matrices
$J_NS^N_0J_N=\left(s_{i+j-2}\right)_{1\leq i,j\leq N}$,
$J_NS^N_1J_N=\left(s_{i+j-1}\right)_{1\leq i,j\leq N}$ and their
difference the following relations hold:
\begin{gather}
\label{hankel_con}
\det(s_{i+j+m-2})_{1\leq i,j\leq n+1}=\det\left(\left(\Delta^{i+j-2} s\right)_m\right)_{1\leq i,j\leq n+1},\quad m=0,1.\\
\det(s_{i+j-2}-s_{i+j-1})_{1\leq i,j\leq
n+1}=\det\left(\left(\Delta^{i+j-2} s\right)_0
-\left(\Delta^{i+j-2} s\right)_1\right)_{1\leq i,j\leq n+1},\notag
\end{gather}
where $n=0,1,\ldots,N-1$. Then the Sylvester criterion of
positivity of a matrix and the condition (\ref{Hausd_inq}) imply
that
\begin{equation*}
\left(\left(\Delta^{i+j-2} s\right)_0\right)_{1\leq i,j\leq
N}\geqslant \left(\left(\Delta^{i+j-2} s\right)_1\right)_{1\leq
i,j\leq N}>0.
\end{equation*}
The above inequalities imply that diagonal elements of matrices
satisfy the same inequalities
\begin{equation}\label{i1}
\left(\Delta^{2i} s\right)_0 \geqslant \left(\Delta^{2i} s\right)_1>0,\quad i=0,1,\dots,N-1.
\end{equation}
Using the definition of $\Delta$ (\ref{def_del}) from (\ref{i1})
we derive that
\begin{equation}\label{i2}
\left(\Delta^{2i+1} s\right)_0 \leqslant 0,\quad i=0,1,\dots,N-1.
\end{equation}
Note that inequalities in (\ref{i1}),(\ref{i2}) are only part of
what we need to show in (\ref{H_cond}). To prove the other part we
note that condition (\ref{Hausd_inq}) implies that
\begin{equation}\label{Hausd_inq2}
 S^{N-m}_{2m}\geqslant S^{N-m}_{1+2m} > 0, \quad  m=0,1,\dots,N-1.
\end{equation}
Like (\ref{i1}) was a consequence of (\ref{Hausd_inq}), the
inequality
\begin{equation}\label{ii1} \left(\Delta^{2i}
s\right)_{2m} \geqslant \left(\Delta^{2i} s\right)_{2m+1}>0,\quad
i=0,1,\dots,N-m-1
\end{equation}
follows from (\ref{Hausd_inq2}). From (\ref{ii1}) one obtains that
\begin{equation}\label{ii2}
\left(\Delta^{2i+1} s\right)_{2m} \leqslant 0,\quad i=0,1,\dots,N-m-1.
\end{equation}
The inequalities (\ref{ii1}), (\ref{ii2}) is exactly what we need
to show in (\ref{H_cond}).
\end{proof}

\begin{proposition}
The inequality (\ref{H_cond}) holding for all $k,n: \ k+n\leqslant
2N-1 $ does not implies (\ref{Hausd_inq}).
\end{proposition}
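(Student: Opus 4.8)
The statement to prove is a negative (non-implication) result, so the natural strategy is to exhibit an explicit counterexample: a sequence $(s_0,s_1,\ldots,s_{2N-1})$ (together with whatever extension is needed to make sense of the finitely many Hausdorff inequalities $(-1)^k(\Delta^k s)_n\geqslant 0$ for $k+n\leqslant 2N-1$) for which those inequalities hold, yet the matrix inequality $S^N_0\geqslant S^N_1>0$ fails. The plan is to keep $N$ small --- $N=2$ is the first interesting case --- and search by hand among short moment sequences. With $N=2$ the Hausdorff conditions needed are $(-1)^k(\Delta^k s)_n\geqslant 0$ for all $k+n\leqslant 3$, which unwinds (using \eqref{dsn}) to the finite list of inequalities $s_0\geqslant 0$, $s_0\geqslant s_1$, $s_1\geqslant 0$, $s_0-2s_1+s_2\geqslant 0$, $s_1\geqslant s_2$, $s_2\geqslant 0$, $-s_0+3s_1-3s_2+s_3\geqslant 0$, $s_2\geqslant s_3$, $s_3\geqslant 0$ --- all of which only involve $s_0,\ldots,s_3$ and hence are genuinely finite constraints.

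Next I would write out the two $2\times2$ Hankel-type matrices explicitly. In the notation of the paper, $S^2_0=\begin{pmatrix} s_2 & s_1\\ s_1 & s_0\end{pmatrix}$ and $S^2_1=\begin{pmatrix} s_3 & s_2\\ s_2 & s_1\end{pmatrix}$ (up to the reversal by $J_N$, which does not affect positivity or the ordering $S^N_0\geqslant S^N_1$). The condition $S^2_1>0$ requires $s_1>0$ and $s_1 s_3-s_2^2>0$; the condition $S^2_0\geqslant S^2_1$ requires the difference $\begin{pmatrix} s_2-s_3 & s_1-s_2\\ s_1-s_2 & s_0-s_1\end{pmatrix}$ to be positive semidefinite, i.e. $s_2\geqslant s_3$, $s_0\geqslant s_1$, and $(s_2-s_3)(s_0-s_1)\geqslant(s_1-s_2)^2$. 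The idea is then to choose numbers obeying all the (purely one-dimensional) Hausdorff inequalities above but violating one of the genuinely two-dimensional determinantal conditions --- most cheaply, violating the off-diagonal/determinant constraint $(s_2-s_3)(s_0-s_1)\geqslant(s_1-s_2)^2$ while still keeping $s_0\geqslant s_1\geqslant s_2\geqslant s_3\geqslant 0$ and the alternating third-difference inequality. Concretely, something like $s_0=1$, $s_1=1$, $s_2=\tfrac12$, $s_3=\tfrac12$ makes $s_0-s_1=0$ and $s_2-s_3=0$ so that $(s_2-s_3)(s_0-s_1)=0$ while $(s_1-s_2)^2=\tfrac14>0$, killing $S^2_0\geqslant S^2_1$; one then only has to double-check that $S^2_1>0$ (here $s_1 s_3-s_2^2=\tfrac12-\tfrac14=\tfrac14>0$, good) and that the third-difference condition $-s_0+3s_1-3s_2+s_3=-1+3-\tfrac32+\tfrac12=1\geqslant0$ holds, which it does; if some inequality turns out to be violated I would perturb the values slightly, since the whole family of constraints is given by finitely many non-strict polynomial inequalities with non-empty interior.

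The only real subtlety --- and the step I would be most careful about --- is bookkeeping: making sure that ``all $k,n$ with $k+n\leqslant 2N-1$'' is interpreted with the same indexing conventions as in the preceding proposition (where $(\Delta^{i+j-2}s)_0\geqslant(\Delta^{i+j-2}s)_1$ for $1\leqslant i,j\leqslant N$ was the relevant range), and that the chosen finite data really does extend to \emph{some} infinite sequence on which the finitely many listed inequalities are the only ones being asserted; since we impose nothing on $s_4,s_5,\ldots$, this is automatic. I would therefore present the example, verify the short finite list of Hausdorff inequalities by direct substitution, and then display the matrix $S^2_0-S^2_1$ together with its negative off-diagonal minor to conclude that $S^N_0\geqslant S^N_1$ fails even though $S^N_1>0$. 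This completes the proof that \eqref{H_cond} for $k+n\leqslant 2N-1$ does not imply \eqref{Hausd_inq}.
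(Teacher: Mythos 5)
Your overall strategy (an explicit small counterexample at $N=2$, breaking one of the determinantal conditions in (\ref{Hausd_inq}) while keeping the finitely many scalar inequalities from (\ref{H_cond})) is sound and is essentially what the paper does, but your execution has a genuine gap: the numbers you exhibit do not satisfy (\ref{H_cond}). With $s=(1,1,\tfrac12,\tfrac12)$ the second-difference condition that you yourself listed fails, since $(\Delta^2 s)_0=s_0-2s_1+s_2=1-2+\tfrac12=-\tfrac12<0$; you checked $S^2_1>0$ and a third-difference inequality but skipped this one. Moreover, your third-difference inequality is mis-signed: $(-1)^3(\Delta^3 s)_0\geqslant 0$ means $s_0-3s_1+3s_2-s_3\geqslant 0$, not $-s_0+3s_1-3s_2+s_3\geqslant 0$ (the completely monotone sequence $s_k=2^{-k}$ gives $+\tfrac18$ for the former and $-\tfrac18$ for the latter), and with the correct sign your example fails this condition too, by $-1$. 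The fallback ``perturb slightly'' does not repair this, because the violations are by fixed amounts ($\tfrac12$ and $1$), not boundary cases, so no small perturbation restores (\ref{H_cond}); you would have to choose genuinely different numbers.

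The plan can be completed. For instance $s=(4,2,1,1)$ satisfies all of (\ref{H_cond}) with $k+n\leqslant 3$ (monotonicity, $(\Delta^2s)_0=(\Delta^2s)_1=1\geqslant0$, $s_0-3s_1+3s_2-s_3=0$), has $S^2_1>0$ (determinant $s_1s_3-s_2^2=1$), yet $S^2_0-S^2_1$ has the reversed Hankel form $\begin{pmatrix}2&1\\ 1&0\end{pmatrix}$ with determinant $-1$, so $S^2_0\geqslant S^2_1$ fails — exactly the mechanism you aimed for. The paper instead perturbs the moment sequence of the point mass at $\lambda=\tfrac12$, taking $s_0=1,\ s_1=\tfrac12,\ s_2=\tfrac14+\varepsilon,\ s_3=\tfrac18$ with $\varepsilon<0$ small, so that the finitely many inequalities (\ref{H_cond}) survive while $\det(s_{i+j-2})_{1\leq i,j\leq2}=\varepsilon<0$ destroys positive definiteness of $S^2_0$ (hence (\ref{Hausd_inq})). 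Either route is acceptable; yours breaks the ordering $S^N_0\geqslant S^N_1$, the paper's breaks positivity, but as written your proof does not yet contain a valid counterexample.
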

\begin{proof}
Consider the point mass measure concentrated at $\lambda=1/2$:
i.e. $d\rho(\lambda)=\delta(\lambda-1/2)$. In this case all
moments (\ref{Moment_eq}) are given by $s_k=(1/2)^k$. Using
(\ref{def_del}) we see that $\left(\Delta^k s\right)_n=(-1)^k
(1/2)^{k+n}$, which agrees with (\ref{H_cond}).

Now we construct counterexample which works even for $N=2$. To do
so we slightly change the moment $s_2$: we take $s_0=1$,
$s_1=1/2$, $s_2=1/4+\varepsilon$, $s_3=1/8$. If $\varepsilon$ is
small enough then (\ref{H_cond}) remains valid, but
(\ref{Hausd_inq}) fails: indeed, $\det(s_{i+j-2})_{1\leq i,j\leq
2}=\varepsilon<0$ if $\varepsilon<0$. Therefore (\ref{Hausd_inq})
doesn't follow from (\ref{H_cond}).
\end{proof}

Note that the above counterexample does not work in the case of
infinite matrices: in this case the restriction on indices
$k+n\leqslant 2N-1$ disappears and one should consider all $k,n$.
Then according to (\ref{dsn})
$$
\left(\Delta^k s\right)_n=\sum_{i=0}^{k}(-1)^i C^i_k s_{n+k-i} =
(-1)^k(1/2)^{k+n} +  (-1)^{n+k-2}\varepsilon C^k_{n+k-2},
$$
and we see that the second term in the right hand side of the
above expression dominates if $k,n$ are large enough. Therefore
for such a moment sequence  the condition (\ref{H_cond}) does not
hold.

\subsection*{Acknowledgments}
The research of Victor Mikhaylov was supported in part by RFBR
17-01-00529, RFBR 18-01-00269. Alexandr Mikhaylov was supported by
RFBR 17-01-00099 and RFBR 18-01-00269.

\end{document}